\documentclass[a4paper,10pt]{amsart}

\usepackage[top=2.5cm, bottom=2.5cm, left=2.5cm, right=2.5cm]{geometry}

\usepackage{nomencl}

\usepackage{amssymb,amsmath,amsthm,ascmac,array,bm}
\usepackage[dvipdfmx]{graphicx}
\usepackage{color}



\newtheorem{thm}{Theorem}[section]

\newtheorem{lem}[thm]{Lemma}
\newtheorem{prop}[thm]{Proposition}
\newtheorem{cor}[thm]{Corollary}
\newtheorem{ex}[thm]{Example}
\newtheorem{rem}[thm]{Remark}
\newtheorem{ass}[thm]{Assumption}

\numberwithin{equation}{section}
\allowdisplaybreaks

\newcommand{\mca}{\mathcal{A}}

\newcommand{\mcc}{\mathcal{C}}

\newcommand{\mcf}{\mathcal{F}}
\newcommand{\mcl}{\mathcal{L}}

\newcommand{\mfb}{\mathfrak{b}}

\newcommand{\mbbd}{\mathbb{D}}

\newcommand{\mbbn}{\mathbb{N}}

\newcommand{\mbbr}{\mathbb{R}}

\newcommand{\mbby}{\mathbb{Y}}

\newcommand{\mbbrp}{\mathbb{R}_{+}}

\newcommand{\al}{\alpha} \newcommand{\lam}{\lambda} \newcommand{\ep}{\epsilon} 
\newcommand{\vp}{\varphi} \newcommand{\del}{\delta} \newcommand{\sig}{\sigma}
\newcommand{\D}{\Delta} \newcommand{\Sig}{\Sigma} \newcommand{\gam}{\gamma}
 \newcommand{\Gam}{\Gamma}

\newcommand{\p}{\partial}
\newcommand{\ra}{\rangle} \newcommand{\la}{\langle}
\newcommand{\wc}{\Rightarrow} 
\newcommand{\cil}{\xrightarrow{\mcl}} 
\newcommand{\scl}{\xrightarrow{\mcl_{s}}} 
\newcommand{\cip}{\xrightarrow{p}} 
\newcommand{\asc}{\xrightarrow{a.s.}} 


\newcommand{\argmax}{\mathop{\rm argmax}}


\def\ds#1{\displaystyle{#1}}

\def\nn{\nonumber}


\def\pr{\mathbb{P}} \def\E{\mathbb{E}}

\newcommand{\sqllf}{\mathbb{H}}

\def\diag{{\rm diag}}

\def\sumj{\sum_{j=1}^{n}}

\def\lp{L\'evy process}
\def\lm{L\'evy measure}
\def\ld{L\'evy density}

\def\wp{Wiener process}
\def\cadlag{c\`adl\`ag}


\def\tz{\theta_{0}}
\def\tes{\hat{\theta}_{n}}
\def\aes{\hat{\alpha}_{n}}

\def\ges{\hat{\gam}_{n}}



\title[Estimation of locally stable L\'{e}vy driven SDE]
{Non-Gaussian quasi-likelihood estimation of SDE driven by locally stable L\'{e}vy process}

\author{Hiroki Masuda}
\address{Faculty of Mathematics, Kyushu University, 744 Motooka Nishi-ku Fukuoka 819-0395, Japan}
\email{hiroki@math.kyushu-u.ac.jp}

\date{\today}
\keywords{Asymptotic mixed normality, high-frequency sampling, locally stable L\'evy process, 
stable quasi-likelihood function, stochastic differential equations.}


\begin{document}
\setlength{\baselineskip}{4.5mm}

\begin{abstract}
We address estimation of parametric coefficients of a pure-jump L\'{e}vy driven univariate stochastic differential equation (SDE) model, which is observed at high frequency over a fixed time period. It is known from the previous study \cite{Mas13as} that adopting the conventional Gaussian quasi-maximum likelihood estimator then leads to an inconsistent estimator.
In this paper, under the assumption that the driving {\lp} is locally stable, we extend the Gaussian framework into a non-Gaussian counterpart, by introducing a novel quasi-likelihood function formally based on the small-time stable approximation of the unknown transition density. The resulting estimator turns out to be asymptotically mixed normally distributed without ergodicity and finite moments for a wide range of the driving pure-jump {\lp es}, showing much better theoretical performance compared with the Gaussian quasi-maximum likelihood estimator. Extensive simulations are carried out to show good estimation accuracy. The case of large-time asymptotics under ergodicity is briefly mentioned as well, where we can deduce an analogous asymptotic normality result.
\end{abstract}

\maketitle

\section{Introduction}\label{sec_intro}

Stochastic differential equation (SDE) driven by a {\lp} is one of basic models to describe time-varying physical and natural phenomena. There do exist many situations where non-Gaussianity of distributions of data increments, or of a residual sequence whenever available, is significant in small-time, making diffusion type models observed at high frequency somewhat inappropriate to reflect reality; see \cite{AitJac14} as well as the enormous references therein, and also \cite{GraSam10}. This non-Gaussianity may not be well modeled even by a diffusion with compound-Poisson jumps as well since jump-time points are then rather sparse compared with sampling frequency, so that most increments are approximately Gaussian except for intervals containing jumps. SDE driven by a pure-jump {\lp} may then serve as a good natural candidate model. For those models, however, a tailor-made estimation procedure seems to be far from being well developed, which motivated our present study.

In this paper, we consider a solution to the univariate Markovian SDE
\begin{equation}
dX_{t}=a(X_{t},\al)dt+c(X_{t-},\gam)dJ_{t}
\label{SDE}
\end{equation}
defined on an underlying complete filtered probability space $(\Omega,\mcf,(\mcf_{t})_{t\in\mbbrp},\pr)$ with
\begin{equation}
\mcf_{t}=\sig(X_{0})\vee\sig(J_{s}; s\le t),
\label{Ft_def}
\end{equation}
where:
\begin{itemize}
\item The initial random variable $X_{0}$ is $\mcf_{0}$-measurable;
\item The driving noise $J$ is a symmetric pure-jump ({\cadlag}) L\'evy process independent of $X_{0}$;
\item The trend coefficient $a:\mbbr\times\Theta_{\al}\to\mbbr$ and scale coefficient $c:\mbbr\times\Theta_{\gam}\to\mbbr$ are assumed to be known except for the $p$-dimensional parameter 
\begin{equation}
\theta:=(\al,\gam)\in\Theta_{\al}\times\Theta_{\gam}=\Theta\subset\mbbr^{p},
\nonumber
\end{equation}
with $\Theta_{\al}\in\mbbr^{p_{\al}}$ and $\Theta_{\gam}\in\mbbr^{p_{\gam}}$ being bounded convex domains.
\end{itemize}
Our objective here is estimation of $\theta$, when the true value $\theta_{0}=(\al_{0},\gam_{0})\in\Theta$ does exist and the process $X$ is observed only at discrete but high-frequency time instants $t^{n}_{j}=jh$, $j=0,1,\dots,n$, with nonrandom sampling step size
\begin{equation}
h=h_{n}\to 0
\nonumber
\end{equation}
We will mostly work under the bounded-domain asymptotics
\footnote{The equidistance assumption on the sampling times could be removed as soon as the ratios of $\min_{j\le n}(t_{j}-t_{j-1})$ and $\max_{j\le n}(t_{j}-t_{j-1})$ are bounded in an appropriate order. This may be shown by the same line as in \cite[pp.1604--1605]{Mas13as}.}:
\begin{equation}
\text{$T_{n} \equiv T$, \quad i.e. $h=\frac{T}{n}$}
\label{fixed.T}
\end{equation}
for a fixed terminal sampling time $T\in(0,\infty)$, that is, we observe not a complete path $(X_{t})_{t\le T}$ but the time-discretized step process
\begin{equation}
X^{(n)}_{t}:=X_{\lfloor t/h\rfloor h}
\label{disc.X}
\end{equation}
over the period $[0,T]$; see Section \ref{sec_ergodic} for the large-time asymptotics where $T_{n}\to\infty$ under the ergodicity.

Due to the lack of a closed-form formula for the transition distribution, a feasible approach based on the {\it genuine} likelihood function is rarely available. 
In this paper, we will introduce a \textit{non-Gaussian quasi-likelihood function}
\footnote{Non-Gaussian quasi-likelihoods have not received much attention compared with the popular Gaussian one. Among others, we refer to the recent paper \cite{FanQiXiu14} for a certain non-Gaussian quasi-likelihood estimation of a possibly heavy-tailed GARCH model, and also to \cite{ZhuLin11} for self-weighted Laplace quasi-likelihood in a time series context.}, which extends the prototype mentioned in \cite{Mas_Dublin} and \cite{Mas15jjss},
under the {\it locally (symmetric) $\beta$-stable property} of $J$ in the sense that
\begin{equation}
\mcl(h^{-1/\beta}J_{h}) \wc S_{\beta},\qquad h\to 0,
\label{A_J1}
\end{equation}
where $S_{\beta}$ stands for the standard symmetric $\beta$-stable distribution corresponding to the characteristic function
\begin{equation}
\vp_{0}(u):=\exp(-|u|^{\beta});
\nonumber
\end{equation}
among others, we refer to \cite{JanWer94}, \cite{SamTaq94}, \cite{Sat99} and \cite{Zol86}) for comprehensive accounts of general stable distributions.
It is known from \cite[Proposition 1]{BerDon97} that, as long as the linear scaling $\ep_{h}J_{h}$ for some $\ep_{h}\to 0$ is concerned, the strictly stable distribution is the only possible asymptotic distribution. Many locally stable {\lp es} with finite variance can exhibit large-time Gaussianity (i.e. central limit effect) in addition to the small-time non-Gaussianity.
In the main results, we will assume the locally stable property \eqref{A_J1} with a stronger mode (see Lemmas \ref{key.lemma} and \ref{key.lemma.cf}) and that the stability index $\beta$ is known with
\begin{equation}
\beta\in[1,2).
\nonumber
\end{equation}
It should be noted that the value $\beta$ is also known as the Blumenthal-Getoor activity index defined by
\begin{equation}
\beta:=\inf\bigg\{b\ge 0: \int_{|z|\le 1}|z|^{b}\nu(dz)<\infty\bigg\},
\nn
\end{equation}
which measures degree of $J$'s jump activity.

The proposed maximum quasi-likelihood estimator $\tes=(\aes,\ges)$ has the property that
\begin{equation}
\left(\sqrt{n}h^{1-1/\beta}(\aes-\al_{0}),~\sqrt{n}(\ges-\gam_{0})\right)
\label{intro.amn}
\end{equation}
is asymptotically mixed normally distributed under some conditions, which extends the previous works \cite{Mas11} and \cite{Mas13as} that adopting the Gaussian quasi-maximum likelihood estimator; we refer to \cite[Section 2]{Mas16.arxiv} for some formal comparisons.
In particular, the convergence \eqref{intro.amn} clarifies that the activity index $\beta$ determines the rate of convergence of estimating the trend parameter $\al$; note that
\begin{equation}
\sqrt{n}h^{1-1/\beta}=T^{1-1/\beta}n^{(2-\beta)/(2\beta)}\to\infty
\nonumber
\end{equation}
as $n\to\infty$. It should be emphasized that this estimator can be much more efficient compared with the Gaussian maximum quasi-likelihood estimator studied in \cite{Mas13as}. Most notably, unlike the case of diffusions, we can estimate not only the scale parameter $\gam$ but also the trend parameter $\al$, with the explicit asymptotic distribution in hand; see \cite{Gob02} for the related local asymptotic normality result. To prove the asymptotic mixed normality, we will take a doubly approximate procedure based on the Euler-Maruyama scheme combined with the stable approximation of $\mcl(h^{-1/\beta}J_{h})$ for $h\to 0$. Our result provides us with the first systematic methodology for estimating the possibly non-linear pure-jump L\'{e}vy driven SDE \eqref{SDE} based on a non-Gaussian quasi-likelihood.

\medskip

Here are a couple of further remarks on our model.

\begin{enumerate}

\item The model is semiparametric in the sense that we do not completely specify the {\lm} of $\mcl(J)$, while supposing the parametric coefficients; of course, the {\lm} is an infinite-dimensional parameter, so that $\beta$ alone never determines the distribution $\mcl(J)$ in general. In estimation of $\mcl(X)$, it would be desirable (whenever possible) to estimate the parameter $\theta$ with leaving the remaining parameters contained in {\lm} as much as unknown. The proposed quasi-likelihood, termed as (non-Gaussian) stable quasi-likelihood, will provide us with a widely applicable tool for this purpose.

\item It is assumed from the very beginning that $\beta<2$ so that $J$ contains no Gaussian component. Normally, the simultaneous presence of a non-degenerate diffusion part and a non-null jump part makes the parametric-estimation problem much more complicated. The recent papers \cite{JinKonLiu12} and \cite{KonLiuJin15} discussed usefulness of pure-jump models. Although they are especially concerned with financial context, pure-jump processes should be useful for model building in many other application fields where non-Gaussianity of time-varying data is of primary importance. For example, econometrics, signal processing, population dynamics, hydrology, radiophysics, turbulence, biological molecule movement, noise-contaminated biosignals, and so on;
we refer to \cite{AitJac14}, \cite{CosBocCauFer16}, and \cite{FagAmiUns14} for some recent related works.

\item Finally, our model \eqref{SDE} may be formally seen as a continuous-time analogue to the discrete-time model
\begin{equation}
X_{j} = a(X_{j-1},\al) + c(X_{j-1},\gam)\ep_{j},\qquad j=1,\dots,n,
\nn
\end{equation}
where $\ep_{j}$ are i.i.d. random variables. By making use of the locally stable property \eqref{A_J1}, our model setup enables us to formulate a flexible and unified estimation procedure, which cannot be shared with the discrete-time counterpart. The bounded-domain asymptotics \eqref{fixed.T} makes it possible to ``localize'' the event, sidestepping both stability (such as the ergodicity) and moment condition on $\mcl(J_{1})$. Instead, in order to deduce the asymptotic mixed normality we need much more than the (martingale) central limit theorem with Gaussian limit.
Fortunately, we have the very general tool to handle this, that is, Jacod's characterization of conditionally Gaussian martingales (see \cite{GenJac93} and \cite{Jac97}, and also Section \ref{sec_localization}), which in particular can deal with the SDE \eqref{SDE} when $J$ is a pure-jump {\lp}.

\end{enumerate}

\medskip

The following conventions and basic notations are used throughout this paper.
We will largely suppress the dependence on $n$ from the notations $t^{n}_{j}$ and $h$.
For any process $Y$,
\begin{equation}
\D_{j}Y=\D^{n}_{j}Y:=Y_{t_{j}}-Y_{t_{j-1}}
\nonumber
\end{equation}
denotes the $j$th increments, and we write
\begin{equation}
g_{j-1}(v)=g(X_{t_{j-1}},v)
\nonumber
\end{equation}
for a function $g$ having two components, such as $a_{j-1}(\al)=a(X_{t_{j-1}},\al)$. 
For a variable $x=\{x_{i}\}$, we write $\p_{x}=\{\frac{\p}{\p x_{i}}\}_{i}$, $\p_{x}^{2}=\{\frac{\p^{2}}{\p x_{i}\p x_{j}}\}_{i,j}$, and so forth, with omitting the subscript $x$ when there is no confusion;
given a function $f=f(s_{1},\dots,s_{k}):\,S_{1}\times\dots\times S_{k}\to\mbbr^{m}$ with $S_{i}\subset\mbbr^{d_{i}}$, we write $\p_{s_{1}}^{j_{1}}\dots\p_{s_{k}}^{j_{k}}f$ for the array of partial derivatives of dimension $m\times(\prod_{l=1}^{k}d_{l}j_{l})$.
The characteristic function of a random variable $\xi$ is denoted by $\vp_{\xi}$.
For any matrix $M$ we let $M^{\otimes 2}:=MM^{\top}$ ($\top$ denotes the transpose). 
We use $C$ for a generic positive constant which may vary at each appearance,
and write $a_{n}\lesssim b_{n}$ when $a_{n}\le Cb_{n}$ for every $n$ large enough. 
Finally, the symbols $\cip$ and $\cil$ denote the convergences in $\pr$-probability and in distribution, respectively;
all the asymptoics below will be taken for $n\to\infty$ unless otherwise mentioned.

\medskip

The paper is organized as follows. We first describe the basic model setup in Section \ref{sec_setup}. The main results are presented in Section \ref{sec_SQL}, followed by numerical experiments in Section \ref{sec_simulations}. 
Section \ref{sec_lem.proofs} presents the proofs of the criteria for the key assumptions given in Section \ref{sec_setup}.
Finally, Section \ref{sec_proofs} is devoted to proving the main results.

\section{Basic setup and assumptions}\label{sec_setup}

\subsection{Locally stable L\'{e}vy process}\label{sec_LSLP}

\subsubsection{Definition and criteria}

We denote by
\begin{equation}
g_{0,\beta}(y):= \frac{c_{\beta}}{|z|^{1+\beta}},\qquad z\ne 0,
\nn
\end{equation}
the {\ld} of $S_{\beta}$, where
\begin{equation}
c_{\beta} := \frac{1}{2}\bigg\{\frac{1}{\beta}\Gam(1-\beta)\cos\bigg(\frac{\beta\pi}{2}\bigg)\bigg\}^{-1}
\label{c.beta_def}
\end{equation}
with $c_{1}=\lim_{\beta\to 1}c_{\beta}=\pi^{-1}$ (\cite[Lemma 14.11]{Sat99}).

\begin{ass}[Driving noise structure]
\begin{enumerate}
\item The {\lp} $J$ has no drift, no Gaussian component, and a symmetric {\lm} $\nu$, so that
\begin{equation}
\vp_{J_{t}}(u) = \exp\bigg( t\int (\cos(uz)-1)\nu(dz)\bigg), \qquad u\in\mbbr.
\label{J.lk}
\end{equation}
The {\lm} $\nu$ admits a Lebesgue density $g$ of the form
\begin{equation}
g(z)=g_{0,\beta}(z)\left\{1+\rho(z)\right\},\qquad z\ne 0,
\nonumber
\end{equation}
where $\rho: \mbbr\setminus\{0\}\to[-1,\infty)$ is a measurable symmetric function such that for some constants $\del>0$, $c_{\rho}\ge 0$, and $\ep_{\rho}>0$,
\begin{equation}
|\rho(z)| \le c_{\rho}|z|^{\del},\qquad |z|\le \ep_{\rho},\quad z\ne 0.
\nonumber
\end{equation}
\item 
In addition to (1), $\rho$ is continuously differentiable in $\mbbr\setminus\{0\}$ and the triplet $(c_{\rho},\beta,\del)$ satisfies either
\begin{enumerate}
\item $c_{\rho}=0$ (that is, $\rho(z)=0$ for $|z|\le\ep_{\rho}$), or
\item $c_{\rho}>0$ and $\del>\beta$ with
\begin{equation}
|\rho(z)| + |z\p\rho(z)| \le c_{\rho}|z|^{\del}, \qquad z\ne 0.
\nonumber
\end{equation}
\end{enumerate}
\end{enumerate}
\label{A_J}
\end{ass}

The function $\rho$ controls the degree of ``$S_{\beta}$-likeness'' around the origin.
Also, if in particular $\rho$ is bounded, then $\E(|J_{1}|^{q})<\infty$ for $q\in(-1,\beta)$; see \cite[Theorem 25.3]{Sat99}.

Lemma \ref{key.lemma} below, which will play an essential role in the proof of the main results, shows that Assumption \ref{A_J} ensures not only the locally stable property \eqref{A_J1} but also an $L^{1}$-local limit theorem with specific convergence rate.

\begin{lem}
\begin{enumerate}
\item Let Assumption \ref{A_J}(1) hold with the function $\rho$ being bounded.
Then, the distribution $\mcl(h^{-1/\beta}J_{h})$ for $h\in(0,1]$ admits a positive smooth Lebesgue density $f_{h}$ such that
\begin{equation}
\int |y|^{\kappa}\left|f_{h}(y)-\phi_{\beta}(y)\right|dy \to 0
\label{llt.imp.1}
\end{equation}
for each $\kappa\in(0,\beta)$.
\item Let Assumption \ref{A_J} hold, and assume that there exists a constant $K>0$ such that $g(z)=0$ (equivalently, $\rho(z)= -1$) for every $|z|>K$.
Then, for any $\ep>0$ we have
\begin{equation}
\int \left|f_{h}(y)-\phi_{\beta}(y)\right|dy \lesssim h^{1-\ep}.
\label{key.lemma-1}
\end{equation}
\end{enumerate}
\label{key.lemma}
\end{lem}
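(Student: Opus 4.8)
The proof will proceed by Fourier-analytic means, exploiting the representation of $f_h$ and $\phi_\beta$ as inverse Fourier transforms of their characteristic functions. Write $\vp_h(u) := \vp_{h^{-1/\beta}J_h}(u) = \exp\bigl(h\int(\cos(h^{-1/\beta}uz)-1)\nu(dz)\bigr)$, which after the substitution $w = h^{-1/\beta}z$ becomes $\exp\bigl(h\int(\cos(uw)-1)g(h^{1/\beta}w)h^{1/\beta}\,dw\bigr)$. Using $g(z) = g_{0,\beta}(z)(1+\rho(z))$ and the exact scaling $g_{0,\beta}(h^{1/\beta}w)h^{1/\beta} = h^{-1/\beta \cdot \beta} g_{0,\beta}(w) \cdot h^{1/\beta} \cdot h^{1/\beta}$... more carefully, $g_{0,\beta}(h^{1/\beta}w) = c_\beta |h^{1/\beta}w|^{-1-\beta} = h^{-(1+\beta)/\beta} g_{0,\beta}(w)$, so $h \cdot g_{0,\beta}(h^{1/\beta}w)\cdot h^{1/\beta} = h^{1 + 1/\beta - (1+\beta)/\beta} g_{0,\beta}(w) = g_{0,\beta}(w)$. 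Thus $\vp_h(u) = \vp_0(u)\exp\bigl(h^{1+1/\beta}\int(\cos(uw)-1)g_{0,\beta}(w)\rho(h^{1/\beta}w)\,dw\bigr) =: \vp_0(u)e^{R_h(u)}$. The key point is that $R_h(u) \to 0$ and we must control it quantitatively in $u$; then by Fourier inversion,
\begin{equation}
\int |f_h(y)-\phi_\beta(y)|\,dy \lesssim \|f_h - \phi_\beta\|_{L^2}^{1/2}\cdot(\cdots)
\nonumber
\end{equation}
or more directly, we bound $\int|f_h-\phi_\beta|\,dy$ by splitting $y$ into a bounded region and a tail region, on the bounded region using $\|\vp_h - \vp_0\|_{L^1}$ and on the tail using that multiplying by $y^m$ corresponds to differentiating the characteristic function $m$ times.

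The first main step is to establish the pointwise and derivative bounds on $R_h(u)$. Since $\rho$ is supported in $|z| \le K$ (by the compact-support hypothesis in part (2)) and $|\rho(z)| + |z\p\rho(z)| \le c_\rho|z|^\delta$ with $\delta > \beta$, the integral defining $R_h$ ranges effectively over $|w| \le K h^{-1/\beta}$. I expect $|R_h(u)| \lesssim h^{1+1/\beta} \cdot \min(u^2, 1) \cdot \int_{|w|\le Kh^{-1/\beta}} |w|^{-1-\beta}\bigl(|w|^2 \wedge 1\bigr)|\rho(h^{1/\beta}w)|\,dw$, and substituting back $z = h^{1/\beta}w$ shows this equals $h \int_{|z|\le K}|z|^{-1-\beta}(\cdots)|\rho(z)|\,dz \lesssim h$ using $\delta > \beta$ to kill the singularity at $0$ — but one must be careful near $|w|$ large where $\cos(uw)-1$ does not decay; here the $|w|^{-1-\beta}$ with $\beta \ge 1$ provides enough decay, and the cutoff at $Kh^{-1/\beta}$ contributes at worst a logarithmic or $h^\epsilon$-type loss when $\beta = 1$, which is exactly why the statement carries the $h^{1-\epsilon}$ rather than $h^1$. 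Similar estimates are needed for $\p_u R_h(u)$ and $\p_u^2 R_h(u)$ to control the tails of $f_h - \phi_\beta$ via integration by parts in the inversion formula; the derivatives bring down factors of $w$ which, combined with the cutoff, are again tamed by $\delta > \beta$ at the price of $h^{-\epsilon}$.

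The second step assembles the $L^1$ bound. Write $f_h(y) - \phi_\beta(y) = \frac{1}{2\pi}\int e^{-iuy}\vp_0(u)(e^{R_h(u)}-1)\,du$. For $|y| \le h^{-\epsilon'}$ (some small $\epsilon'$) we use $\int|f_h - \phi_\beta|\,dy \lesssim h^{-\epsilon'}\sup_y|f_h(y)-\phi_\beta(y)| \lesssim h^{-\epsilon'}\int \vp_0(u)|e^{R_h(u)}-1|\,du \lesssim h^{-\epsilon'}\cdot h^{1-\epsilon''}$ since $|e^{R_h(u)}-1| \lesssim |R_h(u)|e^{|R_h(u)|} \lesssim h^{1-\epsilon''}\vp_0(u)^{-o(1)}$ integrated against the rapidly decaying $\vp_0(u) = e^{-|u|^\beta}$. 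For $|y| > h^{-\epsilon'}$ we use $y^2(f_h(y)-\phi_\beta(y)) = -\frac{1}{2\pi}\int e^{-iuy}\p_u^2\bigl(\vp_0(u)(e^{R_h(u)}-1)\bigr)\,du$, so $\int_{|y|>h^{-\epsilon'}}|f_h-\phi_\beta|\,dy \le h^{\epsilon'}\int|y^2(f_h-\phi_\beta)|\cdot|y|^{-2}\cdot|y|^0\,dy$ — better, $\sup_y |y|^2|f_h(y)-\phi_\beta(y)| \lesssim \int |\p_u^2(\vp_0(e^{R_h}-1))|\,du \lesssim h^{1-\epsilon''}$ using the derivative bounds from step one together with the Schwartz decay of $\vp_0$, and then $\int_{|y|>h^{-\epsilon'}}|f_h - \phi_\beta|\,dy \lesssim h^{1-\epsilon''}\int_{|y|>h^{-\epsilon'}}|y|^{-2}\,dy \lesssim h^{1-\epsilon''}h^{\epsilon'} \lesssim h^{1-\epsilon}$. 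Choosing $\epsilon', \epsilon''$ small relative to a prescribed $\epsilon$ finishes the bound. Part (1) of Lemma \ref{key.lemma} is the analogous but easier statement where no rate is claimed: boundedness of $\rho$ gives $R_h(u)\to 0$ pointwise with a uniform-on-compacts and dominated bound, yielding $\vp_h\to\vp_0$ in a weighted $L^1$ sense, hence convergence of $f_h$ and of $\int|y|^\kappa|f_h - \phi_\beta|\,dy$ by the same inversion-plus-tail argument, using $\E|J_1|^q < \infty$ for $q < \beta$ to control moments; this is presumably cited or proved just before, so I would treat part (1) as the warm-up and concentrate the work on part (2).

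The main obstacle I anticipate is the interplay between the low-frequency/high-frequency split in $u$ and the spatial cutoff at $|w| \le Kh^{-1/\beta}$ in the definition of $R_h$: getting the clean $h^{1-\epsilon}$ rate requires that the contribution of large $|w|$ in the integral $\int(\cos(uw)-1)g_{0,\beta}(w)\rho(h^{1/\beta}w)\,dw$ not blow up, and when $\beta = 1$ the integrand behaves like $|w|^{-2}(\cos(uw)-1)$ near infinity which is integrable, but the derivatives $\p_u^k R_h$ pick up $w^k |w|^{-2}$ and diverge, forcing the $h^{-\epsilon}$ buffer; carefully bookkeeping these losses — and ensuring they can be made smaller than any prescribed $\epsilon$ uniformly — is the delicate part. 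A secondary technical point is justifying all the Fourier inversions and differentiations under the integral sign, which is routine given that $\vp_h$ is Schwartz-class (the density $f_h$ is smooth by part (1), and $\vp_0 e^{R_h}$ decays like $e^{-|u|^\beta}$ times a subexponential factor).
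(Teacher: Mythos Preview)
Your overall Fourier-analytic strategy is on the right track for part (1) and for the near-region in part (2), but the tail argument you propose for part (2) has a genuine gap that cannot be repaired by ``bookkeeping $h^{-\epsilon}$ losses''.

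Concretely: you claim $\sup_y |y|^2|f_h(y)-\phi_\beta(y)| \lesssim \int |\partial_u^2(\vp_0(e^{R_h}-1))|\,du \lesssim h^{1-\epsilon''}$. Expanding, the dominant term is $\int \vp_0 e^{R_h}|\partial_u^2 R_h|\,du$. But $\partial_u^2 R_h(u) = -\int z^2\cos(uz)\rho(h^{1/\beta}z)g_{0,\beta}(z)\,dz$ has integrand of size $|z|^{1-\beta}|\rho(h^{1/\beta}z)|$; after the substitution $w=h^{1/\beta}z$ this yields $|\partial_u^2 R_h(u)| \lesssim h^{1-2/\beta}$ uniformly in $u$, and no oscillation in $u$ helps once you take absolute values. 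For $\beta\in[1,2)$ the exponent $1-2/\beta$ ranges over $[-1,0)$, so your tail bound becomes $\int_{|y|>h^{-\epsilon'}}|f_h-\phi_\beta|\,dy \lesssim h^{1-2/\beta+\epsilon'}$, which at $\beta=1$ is $h^{\epsilon'-1}\to\infty$. The divergence you flagged as ``forcing an $h^{-\epsilon}$ buffer'' is in fact an $h^{-(2-\beta)/\beta}$ loss, not an $h^{-\epsilon}$ one.

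The paper circumvents this in two ways. First, the tail $\int_{|y|>b_n}|f_h-\phi_\beta|\,dy$ is \emph{not} handled via Fourier decay at all: it uses that the L\'evy density of $h^{-1/\beta}J_h$ is dominated by a multiple of $g_{0,\beta}$, hence (by Sato's moment theorem for infinitely divisible laws) $\sup_{h\le 1}\sup_{M>0}M^{\beta}\int_{|y|>M}f_h(y)\,dy<\infty$, giving the tail bound $b_n^{-\beta}$ directly. Second, for the near region $|y|\le b_n$ the paper integrates by parts only \emph{once} and exploits $|\sin(uy)|\lesssim |uy|^r$ for $r\in(0,1]$ to get $\int_{|y|\le b_n}|f_h-\phi_\beta|\,dy \lesssim b_n^r\int u^r|\partial_u\vp_h-\partial_u\vp_0|\,du$. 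The crucial step is then the pointwise estimate $|\partial_u\psi_h(u)+\beta u^{\beta-1}|\lesssim h/u$, whose proof is not routine: it relies on a specific closed-form identity for $\int_0^r x^{-\beta}\sin x\,dx - \Gamma(1-\beta)\cos(\beta\pi/2)$ (an incomplete Fresnel-type integral) to show the ``$A_h$'' piece is $O(h/u)$, and on Assumption \ref{A_J}(2) with $\delta>\beta$ for the ``$R_h$'' piece. Optimizing $b_n^{-\beta}+b_n^r h$ in $b_n$ yields $h^{\beta/(\beta+r)}$, and since $r>0$ is arbitrary this gives $h^{1-\epsilon}$.

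For part (1) your sketch is closer to the paper's argument (uniform $L^\infty$ bound via Fourier plus a moment-based tail), and would go through once you invoke the L\'evy-measure tail bound rather than attempting Fourier-based $|y|^{-2}$ decay.
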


\medskip

The additional assumptions on $\rho$ in Lemma \ref{key.lemma} will not be real restrictions, because in the proof of our main result we will truncate the support of $\nu$ in order to deal with possibly heavy-tailed $J$. The localization argument is allowed under the bounded-domain asymptotics \eqref{fixed.T}; see Section \ref{sec_localization}.
It follows from \eqref{key.lemma-1} that under \eqref{fixed.T} we can pick a sufficiently small $\ep>0$ to ensure that
\begin{equation}
\sqrt{n}\int \left|f_{h}(y)-\phi_{\beta}(y)\right|dy \to 0.
\label{llt.imp.2}
\end{equation}
As will be seen later, it is the convergences \eqref{llt.imp.1} and \eqref{llt.imp.2} that are essential for the proofs of the main results.
Assumption \ref{A_J} (also Assumption \ref{A_J.cf} given below) serves as a set of sufficient conditions.

\medskip

Assumption \ref{A_J} is designed to give conditions only in terms of the {\ld} $g$. However, Assumption \ref{A_J}(2) excludes cases where $\rho\in\mcc^{1}(\mbbr\setminus\{0\})$ with $|\p\rho(0+)|>0$ (hence $\del=1$) and $\beta\le 1$; note that we do not explicitly impose that $\beta\in[1,2)$ in Lemma \ref{key.lemma} (and also in Lemma \ref{key.lemma.cf} below).
It is possible to give another set of conditions. 

Denote by $\phi_{\beta}$ the density of $S_{\beta}$, and let
\begin{equation}
\psi_{h}(u) := \log\vp_{h}(u),
\nonumber
\end{equation}
where $\vp_{h}$ denotes the characteristic function of $h^{-1/\beta}J_{h}$:
\begin{equation}
\vp_{h}(u) := \big\{\vp_{J_{1}}(h^{-1/\beta}u)\big\}^{h},\qquad h>0, \quad u\in\mbbr.
\nn
\end{equation}

\begin{ass}[Driving noise structure]
Assumption \ref{A_J}(1) holds with the function $\rho$ being bounded, $\psi_{h}\in\mcc^{1}(\mbbr\setminus\{0\})$, 
and there exist constants $c_{\psi}\ge 0$ and $r\in[0,1]$ and a function $\ep_{\psi}(h)$ such that $\ep_{\psi}(h)\to 0$ as $h\to 0$ and that
\begin{align}
& |\p_{u}\psi_{h}(u)| \lesssim \frac{1}{u}\vee u^{c_{\psi}},\qquad u>0, \label{A_J.cf-1} \\
& \int_{(0,\infty)}u^{r}\vp_{0}(u) \left| \p_{u}\psi_{h}(u) + \beta u^{\beta-1} \right| du \le \ep_{\psi}(h).
\label{A_J.cf-2}
\end{align}
\label{A_J.cf}
\end{ass}

\begin{lem}
Under Assumption \ref{A_J.cf}, we have
\begin{equation}
\int \left|f_{h}(y)-\phi_{\beta}(y)\right|dy \lesssim (\ep_{\psi}(h) \vee h^{(\del/\beta)\wedge 1})^{\frac{\beta}{\beta+r}}.
\nonumber
\end{equation}
In particular, \eqref{llt.imp.2} holds if $\sqrt{n}(\ep_{\psi}(h) \vee h^{(\del/\beta)\wedge 1})^{\frac{\beta}{\beta+r}} \to 0$.
\label{key.lemma.cf}
\end{lem}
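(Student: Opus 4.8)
The plan is to establish the $L^1$-bound on $f_h - \phi_\beta$ by a Fourier-inversion argument, estimating the characteristic-function difference $\vp_h - \vp_0$ and then optimizing a standard split of the inversion integral into a low-frequency and a high-frequency part. Since both $f_h$ and $\phi_\beta$ are bounded densities with bounded derivatives (the latter because $\vp_0$ and, under \eqref{A_J.cf-1}, $\vp_h$ are integrable together with their relevant derivatives), I would first record a Gagliardo--Nirenberg-type interpolation: for a fixed truncation level $R>0$,
\begin{equation}
\|f_h - \phi_\beta\|_{L^1} \lesssim \int_{|u|\le R}|\vp_h(u)-\vp_0(u)|\,du \;+\; \|f_h-\phi_\beta\|_\infty^{r/(\beta+r)}\Big(\int_{|u|>R}|u|^{r}(|\vp_h(u)|+|\vp_0(u)|)\,du\Big)^{\beta/(\beta+r)},
\end{equation}
or rather the cleaner variant obtained by first bounding $\|f_h-\phi_\beta\|_\infty \le \int|\vp_h-\vp_0|$ and the tail via weighted integrability; the exponent $\beta/(\beta+r)$ in the statement is exactly the one produced by balancing a tail of the form $\int_{|u|>R}|u|^r\vp_0(u)\,du$ (which decays like $R^{r-\beta}$ up to the exponential, or polynomially if one only uses the stable tail) against a polynomially growing low-frequency term.

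The core estimate is control of $|\vp_h(u)-\vp_0(u)|$, equivalently of $\psi_h(u)+|u|^\beta$, on the region where it matters. By symmetry restrict to $u>0$. Write $\psi_h(u)+u^\beta = \int_0^u\big(\p_v\psi_h(v)+\beta v^{\beta-1}\big)dv$, so that by \eqref{A_J.cf-2},
\begin{equation}
\sup_{u>0}\big|\vp_0(u)\big|\,\big|\psi_h(u)+u^\beta\big| \;\le\; \int_{(0,\infty)}\big|\p_v\psi_h(v)+\beta v^{\beta-1}\big|\,dv,
\end{equation}
but this crude bound is not weighted correctly; instead I would keep the weight $v^r\vp_0(v)$ inside and use the elementary inequality $|e^{a}-e^{b}|\le(|e^a|\vee|e^b|)\,|a-b|$ with $a=\psi_h(u)$, $b=-u^\beta$, together with the fact that $\mathrm{Re}\,\psi_h\le 0$ (as $\vp_h$ is a characteristic function, indeed a real one here by symmetry), to get $|\vp_h(u)-\vp_0(u)|\le \vp_0(u)|\psi_h(u)+u^\beta|$ after checking $|\vp_h(u)|\le\vp_0(u)$ — or, if that domination fails, $|\vp_h(u)|\le 1$ suffices on a bounded range. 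Integrating in $u$ over $(0,R)$ and invoking Fubini with \eqref{A_J.cf-2} yields a low-frequency contribution of order $\ep_\psi(h)$ up to a factor depending on $R$; the residual $h^{(\del/\beta)\wedge 1}$ term enters because, near $u=0$, the leading correction to $\psi_h$ from the density perturbation $\rho$ contributes a term of size $h^{(\del/\beta)\wedge 1}|u|^{(\del\wedge\beta)+\dots}$, which one sees from the Lévy--Khintchine formula $\psi_h(u) = h\int(\cos(h^{-1/\beta}uz)-1)g(z)\,dz$ and the bound $|\rho(z)|\le c_\rho|z|^\del$ for small $z$ — a computation of the same flavour as in the proof of Lemma \ref{key.lemma}(1).

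The high-frequency part uses \eqref{A_J.cf-1}: from $|\p_u\psi_h(u)|\lesssim u^{-1}\vee u^{c_\psi}$ and $\psi_h(u_0)\to-\infty$-type control one gets that $|\vp_h(u)|$ decays fast enough that $\int_{|u|>R}|u|^r|\vp_h(u)|\,du$ is finite and small for large $R$ uniformly in small $h$; combined with the Gaussian-like (actually polynomial) tail of $\int_{|u|>R}|u|^r\vp_0(u)\,du$ this gives the tail contribution. Finally I would choose $R=R_h$ to balance the two contributions — the low-frequency term grows in $R$ like some power of $R$ times $\ep_\psi(h)\vee h^{(\del/\beta)\wedge1}$, the high-frequency term shrinks like a power of $R$ — and optimizing produces the stated exponent $\beta/(\beta+r)$. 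The main obstacle I anticipate is the uniform-in-$h$ control of the characteristic function $\vp_h$ away from the origin: one must rule out that $|\vp_h|$ stays close to $1$ on a large $u$-range as $h\to 0$, which is precisely what \eqref{A_J.cf-1} is designed to prevent but requires care to convert a bound on $\p_u\psi_h$ into a genuine decay bound on $|\vp_h|=\exp(\mathrm{Re}\,\psi_h)$; this is the step where the structure $\psi_h\in\mcc^1(\mbbr\setminus\{0\})$ and the explicit Lévy form of $\psi_h$ must be combined, and it is analogous to (but more delicate than) the smoothness/positivity assertion for $f_h$ already used in Lemma \ref{key.lemma}. The concluding sentence about \eqref{llt.imp.2} is then immediate from $\sqrt n = \sqrt{T/h}\cdot\sqrt{T}$ and the hypothesis.
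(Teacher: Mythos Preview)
Your proposal has the right ingredients but organizes them around the wrong variable, and this is a genuine gap. You split the Fourier-inversion integral in the frequency variable $u$ at some level $R$ and try to get an $L^{1}(dy)$ bound from that; but integrability of $\vp_{h}-\vp_{0}$ only controls $\|f_{h}-\phi_{\beta}\|_{\infty}$, not $\|f_{h}-\phi_{\beta}\|_{L^{1}}$, and the ``Gagliardo--Nirenberg-type'' display you wrote is not a valid inequality (nor does the qualifier ``or rather the cleaner variant\dots'' rescue it). No amount of tail decay of $\vp_{h}$ or $\vp_{0}$ alone will yield an $L^{1}(dy)$ bound without an additional mechanism.

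The paper's argument supplies that mechanism by splitting in the \emph{physical} variable $y$ at a level $b_{n}$. For $|y|>b_{n}$ one uses the uniform-in-$h$ tail estimate \eqref{key.lemma-p3}, available because Assumption~\ref{A_J.cf} includes Assumption~\ref{A_J}(1) with bounded $\rho$; this gives the contribution $b_{n}^{-\beta}$ and is where the exponent $\beta$ in $\beta/(\beta+r)$ actually comes from (not from $\int_{|u|>R}|u|^{r}\vp_{0}(u)\,du$ as you suggest). For $|y|\le b_{n}$ the key step you are missing is an \emph{integration by parts in $u$} inside the Fourier inversion: this turns $\cos(uy)(\vp_{h}-\vp_{0})$ into $y^{-1}\sin(uy)(\p_{u}\vp_{h}-\p_{u}\vp_{0})$, and then the elementary bound $|\sin(uy)|\lesssim(uy)^{r}$ produces a factor $y^{r-1}$ which is integrable on $(0,b_{n}]$, giving the factor $b_{n}^{r}$. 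Writing $\p_{u}\vp_{h}-\p_{u}\vp_{0}=(\vp_{h}-\vp_{0})\p_{u}\psi_{h}+\vp_{0}(\p_{u}\psi_{h}+\beta u^{\beta-1})$, the first piece is handled by \eqref{A_J.cf-1} together with Lemma~\ref{key.lemma+1} (this is where $h^{(\del/\beta)\wedge 1}$ enters), and the second piece is exactly $\ep_{\psi}(h)$ by \eqref{A_J.cf-2}. Optimizing $b_{n}^{-\beta}+b_{n}^{r}(\ep_{\psi}(h)\vee h^{(\del/\beta)\wedge 1})$ over $b_{n}$ yields the stated exponent. Your worry about ``uniform-in-$h$ control of $\vp_{h}$ away from the origin'' is not the obstacle; that decay is already secured by Lemma~\ref{key.lemma+1}, and \eqref{A_J.cf-1} is used only to bound $|\p_{u}\psi_{h}|$ as a multiplier against $|\vp_{h}-\vp_{0}|$.
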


Lemmas \ref{key.lemma}(2) and \ref{key.lemma.cf} have no inclusion relation with different domains of the applicability.

\subsubsection{Examples}

\begin{ex}{\rm 
Trivially, Assumption \ref{A_J} is satisfied by the $\beta$-stable driven case ($\mcl(J_{1})=S_{\beta}$) and the whole class of the driving {\lp} considered in \cite{CleGlo15} and \cite{CleGloNgu17}, where $c_{\rho}=0$ (equivalently $\rho\equiv 0$).
See also Remark \ref{rem_asymp.efficiency}.
\label{ex.cgn}
}\qed\end{ex}

In the next two concrete examples, Assumption \ref{A_J.cf} is helpful for verification of \eqref{llt.imp.2} while Assumption \ref{A_J}(2) may not.

\begin{ex}[Symmetric tempered $\beta$-stable {\lp} with $\beta\in[1,2)$]{\rm 
The symmetric exponentially tempered $\beta$-stable {\lp}, which we denote by $TS_{\beta}(\lam)$ for $\lam>0$, is defined through the {\ld}
\begin{equation}
z\mapsto g_{0,\beta}(z)\exp(-\lam|z|);
\nonumber
\end{equation}
we refer to \cite{KawMas11jcam} and the references therein for details of general tempered stable distributions.
When $\mcl(J_{1})=TS_{\beta}(\lam)$, then $\mcl(h^{-1/\beta}J_{h})=TS_{\beta}(\lam h) \wc S_{\beta}$ as $h\to 0$.
Assumption \ref{A_J}(1) is satisfied with $\rho(z)=\exp(-\lam |z|)-1$, hence $\del=1$ for any $\beta<2$.
However, Assumption \ref{A_J}(2) then requires $\beta<1$, which conflicts with the case $\beta\in[1,2)$ of our interest here. We will instead verify Assumption \ref{A_J.cf} for $\beta\in[1,2)$;
the function $\psi_{h}$ is explicitly given by
\begin{equation}
\psi_{h}(u)=
\begin{cases}
\ds{\frac{1}{\pi}\bigg\{ \lam h \log\bigg(1+ \frac{u^{2}}{\lam^{2}h^{2}}\bigg) -2u\arctan\bigg(\frac{u}{\lam h}\bigg)\bigg\}} & (\beta=1) \\[3mm]
\ds{2c_{\beta}\Gam(-\beta) \bigg[ (\lam^{2}h^{2/\beta}+u^{2})^{\beta/2}\cos\bigg\{\beta\arctan\bigg(\frac{u}{\lam h^{1/\beta}}\bigg)\bigg\} - \lam^{\beta}h
\bigg]} & (\beta\in(1,2))
\end{cases}
\nonumber
\end{equation}
First we consider $\beta=1$, where $\p_{u}\psi_{h}(u)=-\frac{2}{\pi}\arctan(\frac{u}{\lam h})$. 
Using the estimate
\begin{equation}
\sup_{y\ge 0} \bigg| \frac{\arctan y - (\pi/2)}{-(1/y)} \bigg| < \infty,
\label{arctan.estimate}
\end{equation}
we have $|\p_{u}\psi_{h}(u) + 1| \lesssim | \arctan(\frac{u}{\lam h}) -\frac{\pi}{2} | \lesssim \frac{h}{u}$. This gives
\begin{equation}
\int_{(0,\infty)}u^{r}\vp_{0}(u) |\p_{u}\psi_{h}(u) + 1| du \lesssim h \int_{(0,\infty)}u^{r-1}e^{-u} du \lesssim h
\nonumber
\end{equation}
Lemma \ref{key.lemma.cf} ensures that $\sqrt{n}\int|f_{h}(y)-\phi_{\beta}(y)|dy \lesssim (nh^{\frac{2}{1+r}})^{1/2}\to 0$, with $r>0$ being small enough;
even when $T_{n}\to\infty$, it suffices for the last convergence to suppose that $nh^{2-\ep_{1}}\to 0$ some $\ep_{1}>0$.

Next we consider $\beta\in(1,2)$ with $r=0$; we may control $r\ge 0$ independently of the case $\beta=1$.
Substituting the expression \eqref{c.beta_def} we see that $\p_{u}\psi_{h}(u) + \beta u^{\beta-1}$ equals the sum of three terms $\D_{h,k}(u)$ ($k=1,2,3$), where
\begin{align}
\D_{h,1}(u) &:= \beta\bigg(\cos\frac{\beta\pi}{2}\bigg)^{-1} \bigg[ \cos\frac{\beta\pi}{2} - 
\cos\bigg\{\beta\arctan\bigg(\frac{u}{\lam h^{1/\beta}}\bigg)\bigg\} \bigg] \frac{u}{(\lam^{2}h^{2/\beta}+u^{2})^{1-\beta/2}},
\nn\\
\D_{h,2}(u) &:= \beta \bigg\{ u^{\beta-1} - \bigg(\frac{u^{2}}{ \lam^{2}h^{2/\beta}+u^{2} }\bigg)^{1-\beta/2} \bigg\},
\nonumber
\end{align}
and $\D_{h,3}(u)$ satisfies that $|\D_{h,3}(u)| \lesssim h^{1/\beta}(\lam^{2}h^{2/\beta}+u^{2})^{\beta/2-1} \lesssim h^{1/\beta} u^{\beta-2}$.
By the mean-value theorem together with \eqref{arctan.estimate}, we derive $|\D_{h,1}(u)| \lesssim \frac{u}{u^{2(1-\beta/2)}}\cdot (h^{-1/\beta}u)^{-1} = h^{1/\beta}u^{\beta -2}$.
Hence, for $\beta>1$,
\begin{equation}
\int_{(0,\infty)}\vp_{0}(u)\big| \D_{h,1}(u) + \D_{h,3}(u) \big| du \le h^{1/\beta} \int_{(0,\infty)} e^{-u^{\beta}} u^{\beta-2}du \lesssim h^{1/\beta}.
\nonumber
\end{equation}
Further, we have
\begin{align}
\int_{(0,\infty)}\vp_{0}(u)|\D_{h,2}(u)|du &\le  \int_{(0,\infty)}\D_{h,2}(u)du \nn\\
&= \int_{(0,\infty)} \beta \bigg\{ u^{\beta-1} - \bigg(\frac{u^{2}}{ \lam^{2}h^{2/\beta}+u^{2} }\bigg)^{1-\beta/2} \bigg\}du \nn\\
&= - \left[ (u^{2}+\lam^{2}h^{2/\beta})^{\beta/2} - (u^{2})^{\beta/2} \right]_{0+}^{\infty} \nn\\
&= -\frac{\lam^{2}h^{2/\beta}}{2}\bigg[ \int_{0}^{1}(u^{2}+s\lam^{2}h^{2/\beta})^{\beta/2-1}ds \bigg]_{0+}^{\infty} \nn\\
&= \frac{1}{2}\lam^{\beta}h \int_{0}^{1}s^{\beta/2-1}ds \lesssim h.
\nonumber
\end{align}
Combining these estimates yields that
\begin{equation}
\int_{(0,\infty)}\vp_{0}(u) |\p_{u}\psi_{h}(u) + \beta u^{\beta-1}| du \lesssim h^{1\wedge (1/\beta)} = h^{1/\beta}.
\nonumber
\end{equation}
Again Lemma \ref{key.lemma.cf} concludes that $\sqrt{n}\int|f_{h}(y)-\phi_{\beta}(y)|dy \lesssim (nh^{2/\beta})^{1/2}\to 0$ if $nh^{2/\beta}\to 0$, which is automatic under \eqref{fixed.T}.
}\qed\label{ex.ts}\end{ex}

\begin{ex}[Symmetric generalized hyperbolic {\lp}]{\rm 
The symmetric generalized hyperbolic distribution \cite{Bar77}, denoted by $GH(\lam,\eta,\zeta)$, is infinitely divisible with the characteristic function
\begin{equation}
u \mapsto \bigg(\frac{\eta^{2}}{\eta^{2}+u^{2}}\bigg)^{\lam/2}\frac{K_{\lam}(\zeta\sqrt{\eta^{2}+u^{2}})}{K_{\lam}(\eta\zeta)},
\nonumber
\end{equation}
where $K_{\lam}$ denotes the modified Bessel function of the third kind with index $\lam\in\mbbr$.
In this example, we will make extensive use of several exact and/or asymptotic properties of $K_{\lam}$,
without notice in most places; we refer to \cite[Chapter 9]{AbrSte92} for details. If $\mcl(J_{1})=GH(\lam,\eta,\zeta)$, then for each $u\in\mbbr$
\begin{equation}
\E(e^{ih^{-1}J_{h}})
= \bigg(\frac{(\eta h)^{2}}{(\eta h)^{2}+u^{2}}\bigg)^{\lam h/2} \bigg(\frac{K_{\lam}\big( (\zeta/h)\sqrt{(\eta h)^{2}+u^{2}} \big)}{K_{\lam}(\eta\zeta)}\bigg)^{h}
\to \exp(-\zeta |u|),\quad h\to 0,
\nonumber
\end{equation}
showing that $J$ is locally Cauchy (for $\zeta=1$). In the sequel we set $\mcl(J_{1})=GH(\lam,\eta,1)$.
By \cite{Rai00} we know that the {\lm} of $GH(\lam,\eta,1)$ admits the density such that
\begin{equation}
z \mapsto \frac{1}{\pi |z|^{2}}\bigg\{ 1 + \frac{\pi}{2}\bigg(\lam+\frac{1}{2}\bigg)|z| + o(|z|)\bigg\},\qquad |z|\to 0.
\nonumber
\end{equation}
Hence Assumption \ref{A_J}(2) fails to hold since $\beta=\del=1$ here (except for the case of $\lam=-1/2$, corresponding to a symmetric normal inverse Gaussian {\lp} \cite{Bar98}).
We will observe that $J$ instead meets Assumption \ref{A_J.cf} for any $(\lam,\eta)\in\mbbr\times(0,\infty)$.
In this case, $\E(|J_{1}|^{q})<\infty$ for any $q>0$.

Direct computations give
\begin{equation}
\p_{u}\psi_{h}(u) + 1 = 1 - \frac{u}{\sqrt{(\eta h)^{2}+u^{2}}}
\frac{K_{\lam+1}}{K_{\lam}}\bigg( \frac{1}{h}\sqrt{(\eta h)^{2}+u^{2}} \bigg).
\label{ex.gh-1}
\end{equation}
The right-hand side is essentially bounded, hence in particular \eqref{A_J.cf-1} holds.

We will verify \eqref{A_J.cf-2} with $r=0$.
Given \eqref{ex.gh-1}, we see that $\int_{(0,\infty)}\vp_{0}(u) \left| \p_{u}\psi_{h}(u) + 1 \right| du \le I'_{h}+I''_{h}$, where
\begin{align}
I'_{h} &:= \int_{(0,\infty)}\vp_{0}(u) \bigg( 1- \frac{u}{\sqrt{(\eta h)^{2}+u^{2}}}\bigg)du, \nn\\
I''_{h} &:= \int_{(0,\infty)}\vp_{0}(u) \frac{u}{\sqrt{(\eta h)^{2}+u^{2}}} 
\bigg|\frac{K_{\lam+1}}{K_{\lam}}\bigg( \frac{1}{h}\sqrt{(\eta h)^{2}+u^{2}} \bigg) -1\bigg|du.
\nonumber
\end{align}
For $I'_{h}$, we divide the domain of integration into $(0,1]$ and $(1,\infty)$ and then derive the following estimates.
\begin{itemize}
\item The $(0,1]$-part can be bounded by
\begin{equation}
\int_{(0,1]}\bigg( 1- \frac{u}{\sqrt{(\eta h)^{2}+u^{2}}}\bigg)du 
= 1-\left[ \sqrt{(\eta h)^{2}+u^{2}} \right]_{0+}^{1} = \frac{2\eta h}{1+\eta h + \sqrt{(\eta h)^{2}+1}} \lesssim h.
\nonumber
\end{equation}
\item The $(1,\infty)$-part equals
\begin{equation}
\int_{(1,\infty)}\vp_{0}(u) \frac{(\eta h)^{2}}{\big(u + \sqrt{(\eta h)^{2}+u^{2}} \big)\sqrt{(\eta h)^{2}+u^{2}}}du
\lesssim h^{2}\int_{(1,\infty)}\vp_{0}(u) \lesssim h^{2}.
\nonumber
\end{equation}
\end{itemize}
Hence we obtain $I'_{h}\lesssim h$.
As for $I''_{h}$, we first make the change of variables:
\begin{equation}
I''_{h} = \int_{(0,\infty)} \vp_{0}(vh) \frac{vh}{\sqrt{\eta^{2}+v^{2}}} 
\bigg|\frac{K_{\lam+1}}{K_{\lam}}\big(\sqrt{\eta^{2}+v^{2}} \big) -1\bigg|dv.
\nonumber
\end{equation}
Just like the case of $I'_{h}$, we look at the $(0,1]$-part and the $(1,\infty)$-part separately.
\begin{itemize}
\item Since we are supposing that $\eta>0$, the $(0,1]$-part trivially equals $O(h)$.
\item The $(1,\infty)$-part is somewhat more delicate. It can be written as
\begin{equation}
I''_{h,1}:=
\int_{(1,\infty)} \vp_{0}(vh) \frac{vh}{\sqrt{\eta^{2}+v^{2}}} 
\bigg| \bigg(\lam+\frac{1}{2}\bigg)\frac{1}{\sqrt{\eta^{2}+v^{2}}} + \sig(v) \bigg|dv,
\nonumber
\end{equation}
where
\begin{equation}
\sig(v) := \frac{K_{\lam+1}}{K_{\lam}}\big(\sqrt{\eta^{2}+v^{2}} \big) -1 -\bigg(\lam+\frac{1}{2}\bigg)\frac{1}{\sqrt{\eta^{2}+v^{2}}},
\nonumber
\end{equation}
which satisfies the property $\sup_{v\ge 1}\left|(\eta^{2}+v^{2})\sig(v)\right| < \infty$.
We then observe that
\begin{align}
I''_{h,1} &\le 
h\bigg( \bigg|\lam+\frac{1}{2}\bigg| \int_{(1,\infty)} \vp_{0}(vh) \frac{v}{\eta^{2}+v^{2}}dv
+ \int_{(1,\infty)} \vp_{0}(vh) \frac{v}{\sqrt{\eta^{2}+v^{2}}}|\sig(v)|dv \bigg) \nn\\
&\lesssim 
h\bigg( \bigg|\lam+\frac{1}{2}\bigg| \int_{(1,\infty)} \vp_{0}(vh) \frac{v}{\eta^{2}+v^{2}}dv
+ \int_{(1,\infty)} v^{-2}dv \bigg) \nn\\
&\lesssim h\bigg( \bigg|\lam+\frac{1}{2}\bigg| \int_{(1,\infty)} \vp_{0}(vh) \frac{v}{\eta^{2}+v^{2}}dv + 1 \bigg).
\nonumber
\end{align}
Further, using the integration by parts and the change of variables we derive
\begin{align}
\int_{(1,\infty)} \vp_{0}(vh) \frac{v}{\eta^{2}+v^{2}}dv
&= -\frac{1}{2}e^{-h}\log(\eta^{2}+1) + \frac{h}{2}\int_{(1,\infty)}e^{-vh}\log(\eta^{2}+v^{2})dv \nn\\
&\lesssim 1 + \int_{(h,\infty)}e^{-x}\log\bigg\{\eta^{2}+\bigg(\frac{x}{h}\bigg)^{2}\bigg\}dx 
\lesssim 1 + \log(1/h).
\nonumber
\end{align}
\end{itemize}
Summarizing the above computations we conclude that
\begin{equation}
\int_{(0,\infty)}\vp_{0}(u) \left| \p_{u}\psi_{h}(u) + 1 \right| du
\lesssim
\begin{cases}
h\log(1/h) & (\lam\ne -1/2) \\
h & (\lam= -1/2)
\end{cases}
\nonumber
\end{equation}
verifying \eqref{A_J.cf-2} with $r=0$. By Lemma \ref{key.lemma.cf} we obtain
\begin{equation}
\int \left|f_{h}(y)-\phi_{\beta}(y)\right|dy \lesssim h\log(1/h) \lesssim h^{a'}
\nonumber
\end{equation}
for any $a'\in(0,1)$.
\label{ex.gh}
}\qed\end{ex}

\subsection{Locally stable stochastic differential equation}

Now let us recall the underlying SDE model \eqref{SDE}.
Denote by $\overline{\Theta}$ the closure of $\Theta=\Theta_{\al}\times\Theta_{\gam}$.

\begin{ass}[Regularity of the coefficients]
\begin{enumerate}
\item The functions $a(\cdot,\al_{0})$ and $c(\cdot,\gam_{0})$ are globally Lipschitz and of class $\mcc^{2}(\mbbr)$, and $c(x,\gam)>0$ for every $(x,\gam)$.
\item $a(x,\cdot)\in\mcc^{3}(\Theta_{\al})$ and $c(x,\cdot)\in\mcc^{3}(\Theta_{\gam})$ for each $x\in\mbbr$.
\item $\ds{\sup_{\theta\in\overline{\Theta}} \bigg\{
\max_{0\le k\le 3}\max_{0\le l\le 2}\bigg(
\left|\p_{\al}^{k}\p_{x}^{l}a(x,\al)\right| 
+ \left|\p_{\gam}^{k}\p_{x}^{l}c(x,\gam)\right|\bigg) + c^{-1}(x,\gam)\bigg\} \lesssim 1+ |x|^{C}}$.
\end{enumerate}
\label{A_coeff}
\end{ass}
The standard theory (for example \cite[III \S 2c.]{JacShi03}) ensures that the SDE admits a unique strong solution 
as a functional of $X_{0}$ and the Poisson random measure driving $J$; in particular, each $X_{t}$ is $\mcf_{t}$-measurable.

\medskip

\begin{ass}[Identifiability]
The random functions $t\mapsto\left(a(X_{t},\al), c(X_{t},\gam)\right)$ and $t\mapsto\left(a(X_{t},\al_{0}), c(X_{t},\gam_{0})\right)$ 
on $[0,T]$ a.s. coincide if and only if $\theta=\tz$.
\label{A_iden}
\end{ass}

\section{Stable quasi-likelihood estimation}\label{sec_SQL}

\subsection{Heuristic for construction}\label{sec_SQL.heuristic}

To motivate our quasi-likelihood, we here present an informal heuristic argument. In what follows we abbreviate $\int_{t_{j-1}}^{t_{j}}$ as $\int_{j}$.
For a moment, we write $\pr_{\theta}$ for the image measures of $X$ given by \eqref{SDE}.
In view of the Euler approximation under $\pr_{\theta}$,
\begin{align}
X_{t_{j}}&=X_{t_{j-1}}+\int_{j}a(X_{s},\al)ds+\int_{j}c(X_{s-},\gam)dJ_{s}
\nonumber \\
&\approx X_{t_{j-1}}+a_{j-1}(\al)h+c_{j-1}(\gam)\D_{j}J,
\nn
\end{align}
from which we may expect that
\begin{equation}
\ep_{j}(\theta)=\ep_{n,j}(\theta):=\frac{\D_{j}X-ha_{j-1}(\al)}{h^{1/\beta}c_{j-1}(\gam)}
\approx h^{-1/\beta}\D_{j}J
\nn
\end{equation}
in an appropriate sense. It follows from the locally stable property \eqref{A_J1} that for each $n$ the random variables $\ep_{1}(\theta),\dots,\ep_{n}(\theta)$ will be approximately i.i.d. with common distribution $S_{\beta}$.

Now assume that the process $X$ admits a (time-homogeneous) transition Lebesgue density under $\pr_{\theta}$, say $p_{h}(x,y;\theta)dy=\pr_{\theta}(X_{h}\in dy|X_{0}=x)$, and let $\E^{j-1}_{\theta}$ denote the expectation operator under $\pr_{\theta}$ conditional on $\mcf_{t_{j-1}}$.
Then, we may consider the following twofold approximation of the conditional distribution $\mcl(X_{t_{j}}|X_{t_{j-1}})$:
\begin{align}
p_{h}(X_{t_{j-1}},X_{t_{j}};\theta)
&=\frac{1}{2\pi}\int\exp(-iuX_{t_{j}}) \int e^{iuy}p_{h}(X_{t_{j-1}},y;\theta)dy \nn\\
&\approx\frac{1}{2\pi}\int\exp(-iuX_{t_{j}})
\E_{\theta}^{j-1}\left[\exp\left\{iu(X_{t_{j-1}}+a_{j-1}(\al)h+c_{j-1}(\gam)\D_{j}J)\right\}\right]du \nn \\
&{}\qquad\text{(Euler approximation)}
\nn\\
&=\frac{1}{2\pi}\int\exp\left\{-iu(\D_{j}X-a_{j-1}(\al)h)\right\} \vp_{h}\big(c_{j-1}(\gam)h^{1/\beta}u\big)du
\nonumber \\
&=\frac{1}{c_{j-1}(\gam)h^{1/\beta}} \frac{1}{2\pi}\int\exp\{-iv\ep_{j}(\theta)\}\vp_{h}(v)dv
\nonumber \\
&=\frac{1}{c_{j-1}(\gam)h^{1/\beta}}f_{h}\left(\ep_{j}(\theta)\right)
\nonumber \\
&\approx \frac{1}{c_{j-1}(\gam)h^{1/\beta}}\phi_{\beta}\left(\ep_{j}(\theta)\right)
\qquad\text{(Locally stable approximation).}
\nn
\end{align}
This informal observation suggests to estimate $\theta_{0}$ by a maximizer of the random function
\begin{equation}
\sqllf_{n}(\theta) := \sumj \log\bigg(\frac{1}{c_{j-1}(\gam)h^{1/\beta}}\phi_{\beta}\left(\ep_{j}(\theta)\right)\bigg),
\label{sql_def}
\end{equation}
which we call the \textit{stable quasi-likelihood}. We then define the {\it stable quasi-maximum likelihood estimator (SQMLE)} as any element $\tes=(\aes,\ges)$ such that
\begin{align}
\tes &\in \argmax_{\theta\in\overline{\Theta}}\sqllf_{n}(\theta)
=\argmax_{\theta\in\overline{\Theta}}\sumj\bigg( -\log c_{j-1}(\gam) +\log\phi_{\beta}\left(\ep_{j}(\theta)\right)\bigg).
\label{sqmle_def}
\end{align}
Since we are assuming that $\overline{\Theta}$ is compact, there always exists at least one such $\tes$.
The heuristic argument for the SQMLE will be verified in Section \ref{sec_amn}.
The SQMLE is the non-Gaussian-stable counterpart to the Gaussian quasi-likelihood previously studied by \cite{Kes97} and \cite{Mas13as} for diffusion and L\'{e}vy driven SDE, respectively.

\medskip

\begin{rem}{\rm 
It may happen, though very rarely, that the density $f_{h}$ of $\mcl(h^{-1/\beta}J_{h})$ is explicit for each $h>0$.
The normal-inverse Gaussian $J$ \cite{Bar98}, which we will use for simulations in Section \ref{sec_sim.nig.J}, is such an example.
In that case, the approximation
\begin{equation}
p_{h}(X_{t_{j-1}},X_{t_{j}};\theta) \approx\frac{1}{c_{j-1}(\gam)h^{1/\beta}}f_{h}\left(\ep_{j}(\theta)\right)
\nonumber
\end{equation}
may result in a better quasi-likelihood since it precisely incorporates information of the driving noise.
Nevertheless and obviously, such an ``exact $\mcl(h^{-1/\beta}J_{h})$'' consideration much diminishes the target class of $J$, and going in this direction entails individual case studies.
}\qed\end{rem}

\subsection{Main result: Asymptotic mixed normality of SQMLE}\label{sec_amn}

For $\mcf$-measurable random variables $\mu=\mu(\omega)\in\mbbr^{p}$ and a.s. nonnegative definite $\Sig=\Sig(\omega)\in\mbbr^{p}\otimes\mbbr^{p}$, we denote by $MN_{p}(\mu,\Sig)$ the $p$-dimensional mixed normal distribution corresponding to the characteristic function
\begin{equation}
v\mapsto \E\bigg\{\exp\bigg(i\mu\cdot v -\frac{1}{2} v\cdot\Sig v\bigg)\bigg\}.
\nonumber
\end{equation}
That is to say, when $Y\sim MN_{p}(\mu,\Sig)$, $Y$ is defined on an extension of the original probability space $(\Omega,\mcf,\pr)$,
and is 
equivalent in $\mcf$-conditional distribution to a random variable $\mu + \Sig^{1/2}Z$ for $Z\sim N_{p}(0,I_{p})$ independent of $\mcf$, where $I_{p}$ denotes the $p$-dimensional identity matrix. Such an (orthogonal) extension of the underlying probability space is always possible.

We introduce the two bounded smooth continuous functions:
\begin{equation}
g_{\beta}(y):=\p_{y}\log\phi_{\beta}(y)=\frac{\p\phi_{\beta}}{\phi_{\beta}}(y),\qquad 
k_{\beta}(y):=1+yg_{\beta}(y).
\nonumber
\end{equation}
We see that $\int g_{\beta}(y)\phi_{\beta}(y)dy = \int k_{\beta}(y)\phi_{\beta}(y)dy=0$, and that $\int g_{\beta}(y) f_{h}(y)dy=0$ as soon as $\int |g_{\beta}(y)| f_{h}(y)dy<\infty$ because $f_{h}$ is symmetric. We also write
\begin{align}
& C_{\al}(\beta) = \int g_{\beta}^{2}(y)\phi_{\beta}(y)dy, \qquad C_{\gam}(\beta)=\int k_{\beta}^{2}(y)\phi_{\beta}(y)dy, \nn\\
& \Sig_{T,\al}(\tz) =\frac{1}{T}\int_{0}^{T}\frac{\{\p_{\al}a(X_{t},\al_{0})\}^{\otimes 2}}{c^{2}(X_{t},\gam_{0})}dt, \qquad 
\Sig_{T,\gam}(\gam_{0}) = \frac{1}{T}\int_{0}^{T}\frac{\{\p_{\gam}c(X_{t},\gam_{0})\}^{\otimes 2}}{c^{2}(X_{t},\gam_{0})}dt.
\nn
\end{align}
The asymptotic behavior of the SQMLE defined through \eqref{sql_def} and \eqref{sqmle_def} is given in the next theorem, which is the main result of this paper. 

\begin{thm}
Suppose that Assumptions \ref{A_J} with $\beta\in[1,2)$, \ref{A_coeff}, and \ref{A_iden} hold. 
Then we have
\begin{equation}
\left(\sqrt{n}h^{1-1/\beta}(\hat{\al}_{n}-\al_{0}),~\sqrt{n}(\hat{\gam}_{n}-\gam_{0})\right) 
\cil MN_{p}\left( 0,\, \Gam_{T}(\tz;\beta)^{-1} \right),
\label{aqmle_amn}
\end{equation}
where
\begin{align}
\Gam_{T}(\tz;\beta) &:= 
\begin{pmatrix}
C_{\al}(\beta)\Sig_{T,\al}(\tz) & 0 \\
0 & C_{\gam}(\beta)\Sig_{T,\gam}(\gam_{0})
\end{pmatrix}.
\nonumber
\end{align}
\label{sqmle.iv_bda_thm}
\end{thm}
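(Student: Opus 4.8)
The plan is to follow the standard $M$-estimation route, but carried out for the \emph{rescaled} criterion function so that the different convergence rates of $\aes$ and $\ges$ are handled simultaneously. Introduce the normalizing matrix $D_{n}:=\diag\big(\sqrt{n}h^{1-1/\beta}I_{p_{\al}},\,\sqrt{n}I_{p_{\gam}}\big)$, set $u=(u_{\al},u_{\gam})\in\mbbr^{p}$, and consider the localized field $\mbbh_{n}(u):=\sqllf_{n}(\tz+D_{n}^{-1}u)-\sqllf_{n}(\tz)$. The three ingredients I would establish are: (i) a \emph{stochastic expansion}
\[
\mbbh_{n}(u)=\D_{n}(\tz)\cdot u-\tfrac12 u\cdot\Gam_{T}(\tz;\beta)u+\usop(1)
\]
uniformly on compacts, where $\D_{n}(\tz):=D_{n}^{-1}\p_{\theta}\sqllf_{n}(\tz)$; (ii) the \emph{stable central limit theorem} $\D_{n}(\tz)\cil MN_{p}(0,\Gam_{T}(\tz;\beta))$; and (iii) a \emph{non-degeneracy/identifiability} argument upgrading (i)--(ii) to the asserted convergence of $D_{n}(\tes-\tz)$ via the argmax (quadratic-expansion) theorem together with consistency. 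Consistency itself I would obtain from Assumption \ref{A_iden} by showing $\frac1n\big(\sqllf_{n}(\theta)-\sqllf_{n}(\tz)\big)\cip-\frac1{T}\int_{0}^{T}\big(\log\frac{c(X_t,\gam)}{c(X_t,\gam_0)}+\E[\log\phi_\beta(\ep)]-\log\phi_\beta(\text{transformed }\ep)\big)dt$ type limit, a nonpositive random function vanishing only at $\tz$.

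The engine behind all of (i)--(iii) is the \emph{doubly approximate} replacement announced in the introduction: first the Euler--Maruyama step replaces $\ep_{j}(\theta)$ by $h^{-1/\beta}\D_{j}J$ plus coefficient factors, then Lemma \ref{key.lemma} replaces the true density $f_h$ of $h^{-1/\beta}\D_jJ$ by $\phi_\beta$. Concretely, the score decomposes as $\p_{\al}\sqllf_{n}=\sum_j \frac{-h\,\p_\al a_{j-1}}{h^{1/\beta}c_{j-1}}g_\beta(\ep_j(\theta))$ and $\p_{\gam}\sqllf_{n}=\sum_j\frac{\p_\gam c_{j-1}}{c_{j-1}}\big(-1-\ep_j(\theta)g_\beta(\ep_j(\theta))\big)=-\sum_j\frac{\p_\gam c_{j-1}}{c_{j-1}}k_\beta(\ep_j(\theta))$. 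Writing these as sums of $\mcf_{t_{j-1}}$-conditional martingale differences (up to negligible drift contributions controlled by Assumption \ref{A_coeff} and the moment bounds $\E(|J_1|^q)<\infty$, $q<\beta$, obtained after the localization of $\nu$'s support permitted under \eqref{fixed.T}, cf. Section \ref{sec_localization}), I would verify Jacod's conditions for conditionally Gaussian martingale CLT: (a) the predictable bracket converges, $\sum_j\E^{j-1}\big[(D_n^{-1}\text{score}_j)^{\otimes2}\big]\cip\Gam_T(\tz;\beta)$, which is exactly where the constants $C_\al(\beta)=\int g_\beta^2\phi_\beta$, $C_\gam(\beta)=\int k_\beta^2\phi_\beta$ and the random integrals $\Sig_{T,\al},\Sig_{T,\gam}$ emerge, using $\frac1n\sum_j f(X_{t_{j-1}})\cip\frac1T\int_0^T f(X_t)dt$ for continuous $f$; (b) a conditional-Lindeberg/Lyapunov bound; and (c) the orthogonality to the driving martingale of $J$ (and to any bounded martingale), which delivers the \emph{mixed}-normal, rather than merely normal, limit and simultaneously forces the off-diagonal block of $\Gam_T$ to vanish (here the symmetry of $S_\beta$, hence $\int yg_\beta(y)^? \phi_\beta=0$ and $\int g_\beta k_\beta\phi_\beta=0$ by odd/even parity, is essential). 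The quadratic part in (i) comes from an analogous law-of-large-numbers for $D_n^{-1}\p_\theta^2\sqllf_{n}(\theta)D_n^{-1}$, shown uniformly in $\theta$ near $\tz$ by the same ergodic-type averaging plus a standard Sobolev/Arzela tightness argument.

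The main obstacle, and the place where more than the classical toolbox is needed, is replacing the \emph{true} law of the increments by the stable law \emph{inside the nonlinear functionals} $g_\beta$ and $k_\beta$ while keeping the error $o_p$ at the delicate rate $\sqrt n$. That is, one must show $\sum_j\E^{j-1}\big[\varpi(h^{-1/\beta}\D_jJ)\big]-n\int\varpi\,\phi_\beta=\usop(\cdot)$ for $\varpi\in\{g_\beta,\,g_\beta^2,\,k_\beta,\,k_\beta^2,\,\dots\}$; since these $\varpi$ are bounded and continuous, each such difference is controlled by $n\int|\varpi|\,|f_h-\phi_\beta|\lesssim n\int|f_h-\phi_\beta|$, and this is precisely why Lemma \ref{key.lemma}(2) / Lemma \ref{key.lemma.cf} were engineered to give $\int|f_h-\phi_\beta|\lesssim h^{1-\ep}$, i.e. the crucial \eqref{llt.imp.2}: $\sqrt n\int|f_h-\phi_\beta|\to0$ under \eqref{fixed.T}. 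A secondary difficulty is the Euler-scheme error: $\ep_j(\theta)$ is not exactly $h^{-1/\beta}\D_jJ$, so one must expand $g_\beta(\ep_j(\theta))$ around $g_\beta(h^{-1/\beta}\D_jJ)$ and bound the remainder using $\int_j(c(X_{s-},\gam)-c_{j-1}(\gam))dJ_s$ and $\int_j(a(X_s,\al)-a_{j-1}(\al))ds$; the scale-coefficient part carries a factor $h^{-1/\beta}$ and is therefore the tightest, requiring the $\mcc^2$ smoothness in Assumption \ref{A_coeff} and Burkholder-type estimates for stable-driven stochastic integrals (for this one truncates large jumps first). Once these two approximations are shown to be $o_p$ after $D_n$-normalization, assembling (i)--(iii) into \eqref{aqmle_amn} is routine; I would present the detailed bookkeeping in Section \ref{sec_proofs}.
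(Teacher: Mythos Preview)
Your overall architecture---normalizing matrix $D_{n}$, stable CLT for $D_{n}^{-1}\p_{\theta}\sqllf_{n}(\tz)$ via Jacod's criterion, law-of-large-numbers for the normalized Hessian, control of the remainder, and the crucial role of the $L^{1}$-local limit estimate \eqref{llt.imp.2}---matches the paper's proof in Sections \ref{sec_proof.consistency}--\ref{sec_proof.amn} closely. The score/Hessian computations you sketch and the two ``obstacles'' you isolate are exactly the ones dealt with there.

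There is, however, a genuine gap in your consistency step when $\beta\in(1,2)$. You propose to show that
\[
\frac{1}{n}\big(\sqllf_{n}(\theta)-\sqllf_{n}(\tz)\big)
\]
converges uniformly to a nonpositive random function vanishing only at $\tz$. But this limit does \emph{not} depend on $\al$ when $\beta>1$: since $\ep_{j}(\theta)=\frac{c_{j-1}(\gam_{0})}{c_{j-1}(\gam)}h^{-1/\beta}\D_{j}J + h^{1-1/\beta}\mfb_{j-1}(\theta)+r_{j}(\gam)$ and $h^{1-1/\beta}\to 0$, the drift contribution washes out at scale $1/n$, and the limit is the function $\mbby_{\beta,1}(\gam)$ of \eqref{def_Y0b.a}, which is maximized on the whole slice $\{\gam=\gam_{0}\}$. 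Your argmax argument therefore gives $\ges\cip\gam_{0}$ but says nothing about $\aes$. The paper resolves this by a \emph{two-scale} consistency lemma (Lemma \ref{lem.2step.consistency}): write
\[
\sqllf_{n}(\theta)-\sqllf_{n}(\tz)=n\,\mbby_{\beta,1,n}(\gam)+nh^{2(1-1/\beta)}\,\mbby_{\beta,2,n}(\al,\gam),
\]
first identify $\gam_{0}$ from the leading term, and then identify $\al_{0}$ from the second term at the finer scale $nh^{2(1-1/\beta)}$, whose limit $\mbby_{\beta,2}(\al,\gam_{0})$ is a genuine (negative) quadratic in $a(\cdot,\al)-a(\cdot,\al_{0})$. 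Without this second pass you cannot conclude $\aes\cip\al_{0}$, and then neither your Taylor/quadratic expansion nor the argmax theorem on the localized field $\mbbh_{n}(u)$ is justified (the expansion is only valid once you know $\tes$ is in a shrinking neighborhood of $\tz$). The case $\beta=1$ is unaffected: there $h^{1-1/\beta}=1$ and the single-scale limit $\mbby_{1}'(\theta)$ does identify both parameters.
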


In Section \ref{sec_ergodic}, we will deduce the large-time counterpart to Theorem \ref{sqmle.iv_bda_thm} under the ergodicity. In that case the asymptotic distribution is not mixed normal but normal, with the asymptotic covariance matrix taking a completely analogous form.

\medskip

Below we list some immediate consequences of Theorem \ref{sqmle.iv_bda_thm} and some related remarks worth being mentioned.

\begin{enumerate}
\item The asymptotic distribution of $\tes$ is normal if both $x\mapsto \frac{\p_{\gam}c(x,\gam_{0})}{c(x,\gam_{0})}$ and $x\mapsto \frac{\p_{\al}a(x,\al_{0})}{c(x,\gam_{0})}$ are non-random;
this is the case if $X$ is a {\lp}.

\item The estimators $\aes$ and $\ges$ are asymptotically orthogonal, whereas not necessarily independent due to possible non-Gaussianity in the limit.

\item For $\beta\in(1,2)$, we can rewrite \eqref{aqmle_amn} as (recall \eqref{fixed.T})
\begin{align}
& \left( n^{1/\beta-1/2}(\hat{\al}_{n}-\al_{0}),~\sqrt{n}(\hat{\gam}_{n}-\gam_{0}) \right)
\nn\\
& \cil MN_{p}\left(0,\,
\diag\big(T^{-2(1-1/\beta)}\{C_{\al}(\beta)\Sig_{T,\al}(\tz)\}^{-1},\, \{C_{\gam}(\beta)\Sig_{T,\gam}(\gam_{0})\}^{-1}\big) \right).
\nn
\end{align}
If fluctuation of $X$ is virtually stable in the sense that both of the random time averages $\Sig_{T,\al}(\theta_{0})$ and $\Sig_{T,\gam}(\gam_{0})$ 
do not vary so much with the terminal sampling time $T$, then, due to the factor ``$T^{-2(1-1/\beta)}$'', 
the asymptotic covariance matrix of $\aes$ would tend to get smaller (resp. larger) in magnitude for a larger (resp. smaller) $T$.
This feature with respect to $T$ is non-asymptotic.

\item Of special interest is the locally Cauchy case ($\beta=1$), where $\sqllf_{n}$ is fully explicit:
\begin{equation}
\sqllf_{n}(\theta) = -\sumj \bigg\{\log(\pi h) + \log c_{j-1}(\gam) + \log\left(1+\ep_{j}^{2}(\theta)\right)\bigg\}.
\nn
\end{equation}
In this case,
\begin{align}
& \left( \sqrt{n}(\hat{\al}_{n}-\al_{0}),~\sqrt{n}(\hat{\gam}_{n}-\gam_{0}) \right)
\nn\\
&\cil MN_{p}\bigg(0,\, \diag\bigg\{
\bigg(\frac{1}{2T}\int_{0}^{T}\frac{\{\p_{\al}a(X_{t},\al_{0})\}^{\otimes 2}}{c(X_{t},\gam_{0})^{2}}dt\bigg)^{-1},
~\bigg(\frac{1}{2T}\int_{0}^{T}\frac{\{\p_{\gam}c(X_{t},\gam_{0})\}^{\otimes 2}}{c(X_{t},\gam_{0})^{2}}dt
\bigg)^{-1}\bigg\} \bigg).
\nonumber
\end{align}
This formally extends the i.i.d. model from the location-scale Cauchy population, where we have $\sqrt{n}$-asymptotic normality for the maximum-likelihood estimator.
The Cauchy quasi-likelihood has been also investigated in the robust-regression literature; see \cite{MizMul99} and \cite{MizMul02} for a breakdown-point result in some relevant models.
It would be interesting to study their SDE-model counterparts.

\end{enumerate}

\medskip

\begin{rem}{\rm 
As indicated by one of the anonymous reviewers, it would be possible to follow the proof of Theorem \ref{sqmle.iv_bda_thm} for time-inhomogeneous coefficients, say
\begin{equation}
dX_{t}=a(t,X_{t},\al)dt+c(t,X_{t-},\gam)dJ_{t},
\nonumber
\end{equation}
under appropriate regularity conditions on $(t,x,\theta)\mapsto (a(t,x,\al), c(t,x,\gam))$; see \cite{GenJac93} for the case of diffusion.
Further, based on the general criterion for the stable convergence, we could deduce a slightly more general statement where the SQMLE has non-trivial asymptotic bias.
In general, it is however impossible to make an explicit bias correction in a unified manner without specific information of $f_{h}$ (hence of the {\lm} $\nu$).
Even when we have a full parametric form of $\nu$, it may contain a parameter which cannot be consistently estimated unless $T_{n}\to\infty$; see \cite{Mas15LM} for specific examples.
}\qed\end{rem}

\begin{rem}{\rm 
The asymptotic efficiency in the sense of Haj\'{e}k-Le Cam-Jeganathan is of primary theoretical importance (see \cite{vdV98}).
Compared with the diffusion case studied in \cite{Gob01} and \cite{Gob02}, asymptotic-efficiency phenomena for the L\'{e}vy driven SDE \eqref{SDE} when observing \eqref{disc.X} have been less well-known. Nevertheless, for the classical local asymptotic normality property results when $X$ is a {\lp}, one can consult \cite{Mas15LM} for several explicit case studies, and to \cite{IvaKulMas15} for a general locally stable {\lp es}. Moreover, \cite{CleGlo15} and the recent preprint \cite{CleGloNgu17} proved the local asymptotic mixed normality property about the unknown parameters especially when $c(x,\gam)$ is a constant and the {\lm} $\nu$ has a bounded support with a stable-like behavior near the origin. Importantly, the model settings of \cite{CleGlo15} and \cite{CleGloNgu17} can be covered by ours (Example \ref{ex.cgn}), so that the asymptotic efficiency of our SQMLE is assured.
In view of their result and just like the fact that the Gaussian QMLE is asymptotically efficient for diffusions, it seems quite promising that the proposed SQMLE is asymptotically efficient for the general class of SDE \eqref{SDE} driven by a locally $\beta$-stable {\lp}.
\label{rem_asymp.efficiency}
}\qed\end{rem}

Here is a variant of Theorem \ref{sqmle.iv_bda_thm}.

\begin{thm}
Suppose that Assumptions \ref{A_J.cf} holds with $\beta\in[1,2)$ and
\begin{equation}
\sqrt{n}(\ep_{\psi}(h) \vee h^{(\del/\beta)\wedge 1})^{\frac{\beta}{\beta+r}} \to 0.
\nonumber
\end{equation}
Suppose also that Assumptions \ref{A_coeff} and \ref{A_iden} hold, and that
\begin{equation}
\int_{|z|>1}|z|^{q}\nu(dz)<\infty
\nonumber
\end{equation}
for every $q>0$. Then we have \eqref{aqmle_amn}.
\label{sqmle.iv_bda_thm.cf}
\end{thm}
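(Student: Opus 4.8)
The plan is to rerun the proof of Theorem~\ref{sqmle.iv_bda_thm}, observing that Assumption~\ref{A_J} enters that argument only through the two local limit theorems \eqref{llt.imp.1} and \eqref{llt.imp.2} (together with moment bounds on $J$ and $X$), and that both convergences remain available under the present hypotheses. Indeed, Assumption~\ref{A_J.cf} contains Assumption~\ref{A_J}(1) with $\rho$ bounded, so Lemma~\ref{key.lemma}(1) already delivers the weighted $L^{1}$-local limit theorem \eqref{llt.imp.1} for every $\kappa\in(0,\beta)$; and Lemma~\ref{key.lemma.cf} combined with the imposed rate $\sqrt{n}(\ep_{\psi}(h)\vee h^{(\del/\beta)\wedge1})^{\beta/(\beta+r)}\to0$ yields $\sqrt{n}\int|f_{h}(y)-\phi_{\beta}(y)|dy\to0$, i.e. \eqref{llt.imp.2}. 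These are the only analytic inputs about $\mcl(J)$ used below.

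Next I would settle the moment bookkeeping, which is where the tail hypothesis $\int_{|z|>1}|z|^{q}\nu(dz)<\infty$ ($q>0$) is needed. Since $g(z)=g_{0,\beta}(z)\{1+\rho(z)\}$ with $\rho$ bounded and $\beta\in[1,2)$, the local behaviour of $\nu$ at the origin already gives $\int_{|z|\le1}|z|^{q}\nu(dz)<\infty$ for every $q\in(\beta,2]$, in particular $\E(J_{1}^{2})<\infty$; together with the tail hypothesis this yields $\E(|J_{1}|^{q})<\infty$ for every $q\ge0$, and then, by Assumption~\ref{A_coeff} and the generic localization through the stopping times $\inf\{t:\,|X_{t}|\vee|X_{t-}|\ge R\}$ (legitimate under \eqref{fixed.T}; see Section~\ref{sec_localization}), every polynomially growing functional of $X$ occurring in the proof is bounded, exactly as in the proof of Theorem~\ref{sqmle.iv_bda_thm}. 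By contrast, the truncation of the support of $\nu$ performed there --- in order to bring Lemma~\ref{key.lemma}(2) into play and to control the moments of $J$ --- is neither available nor needed here: a truncation of $\nu$ need not preserve Assumption~\ref{A_J.cf}, whereas Lemma~\ref{key.lemma.cf} requires no support restriction and the moments of $J$ are granted directly.

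Given these two points, the rest of the proof is word-for-word the same as for Theorem~\ref{sqmle.iv_bda_thm}. One expands $\p_{\theta}\sqllf_{n}(\tz)$ and $\p_{\theta}^{2}\sqllf_{n}$ in the residuals $\ep_{j}(\theta)$; replaces $\ep_{j}(\tz)$ by $h^{-1/\beta}\D_{j}J$ up to the Euler-scheme error, which is asymptotically negligible after the normalization in \eqref{aqmle_amn} by the same estimates as in the proof of Theorem~\ref{sqmle.iv_bda_thm} (these rely on \eqref{llt.imp.1}, on the moment bounds above, and on the martingale property of the stochastic part of the Euler remainder); uses \eqref{llt.imp.2} to pass from $f_{h}$ to $\phi_{\beta}$ in the one-step conditional expectations of $g_{\beta}$, $k_{\beta}$, $g_{\beta}^{2}$, $k_{\beta}^{2}$ and their derivatives, so that the summed bias of the quasi-score vanishes (recall $\int g_{\beta}f_{h}=0$ by symmetry); invokes the bounded-domain form of Jacod's criterion for conditionally Gaussian martingales (Section~\ref{sec_localization}) to identify the mixed-normal limit of the normalized quasi-score with covariance $\Gam_{T}(\tz;\beta)$; and couples the uniform-in-$\theta$ convergence of the normalized Hessian with Assumption~\ref{A_iden} to obtain \eqref{aqmle_amn} for $\tes$.

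I do not anticipate a genuine obstacle, only a point requiring care: one must verify that each appeal, in the proof of Theorem~\ref{sqmle.iv_bda_thm}, to a moment bound on $J_{1}$ or $X_{t}$ and to the weighted local limit theorem is indeed covered under the weaker structural assumption --- the moments by the second paragraph above, and \eqref{llt.imp.1} by Lemma~\ref{key.lemma}(1), whose hypotheses form part of Assumption~\ref{A_J.cf}. Once this bookkeeping is complete, no new estimate is required and the statement follows.
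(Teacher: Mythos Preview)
Your proposal is correct and matches the paper's own argument: the proof of Theorem~\ref{sqmle.iv_bda_thm} uses Assumption~\ref{A_J} only through \eqref{llt.imp.1}, \eqref{llt.imp.2} and the moment estimates \eqref{loc.X.ineq}, \eqref{LusPag.ineq.result}; Lemma~\ref{key.lemma}(1) and Lemma~\ref{key.lemma.cf} supply the first two, and the tail hypothesis $\int_{|z|>1}|z|^{q}\nu(dz)<\infty$ supplies the moments directly, so no truncation of $\nu$ is needed. One minor remark: the stopping-time localization of $X$ you invoke is unnecessary here --- once $J_{1}$ has all moments and the coefficients are globally Lipschitz, the estimates \eqref{loc.X.ineq} hold outright, which is exactly how the paper phrases it.
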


To state a corollary to Theorems \ref{sqmle.iv_bda_thm} and \ref{sqmle.iv_bda_thm.cf}, we introduce the following statistics:
\begin{align}
& \hat{\Sig}_{T,\al,n} := \frac{1}{n}\sumj \frac{\{\p_{\al}a_{j-1}(\aes)\}^{\otimes 2}}{c^{2}_{j-1}(\ges)}, \qquad 
\hat{\Sig}_{T,\gam,n} := \frac{1}{n}\sumj \frac{\{\p_{\gam}c_{j-1}(\ges)\}^{\otimes 2}}{c^{2}_{j-1}(\ges)}.
\nonumber
\end{align}
It turns out in the proof that the quantity $\left((\sqrt{n}h^{1-1/\beta})^{-1}\p_{\al}\sqllf_{n}(\tz),\, n^{-1/2}\p_{\gam}\sqllf_{n}(\tz)\right)$, the normalized quasi-score, $\mcf$-stably converges in distribution (Section \ref{sec_score.scle.proof}), from which the Studentization via the continuous-mapping theorem is straightforward:

\begin{cor}
Under the assumptions of either Theorem \ref{sqmle.iv_bda_thm} or Theorem \ref{sqmle.iv_bda_thm.cf}, we have
\begin{equation}
\left( \big(C_{\al}(\beta)\hat{\Sig}_{T,\al,n}\big)^{1/2} \sqrt{n}h^{1-1/\beta}(\hat{\mu}_{n}-\mu_{0}), \, 
\big(C_{\gam}(\beta)\hat{\Sig}_{T,\gam,n}\big)^{1/2} \sqrt{n}(\hat{\sig}_{n}-\sig_{0}) \right) \cil N_{p}(0,I_{p}).
\label{aqmle_amn_sn}
\end{equation}
\label{sqmle.iv_bda_thm.cor}
\end{cor}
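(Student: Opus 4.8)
The plan is to combine the $\mcf$-stable convergence of the normalized SQMLE, which is a by-product of the proofs of Theorems~\ref{sqmle.iv_bda_thm} and~\ref{sqmle.iv_bda_thm.cf}, with a law of large numbers for the Studentizing statistics, and then to invoke the stable-convergence version of Slutsky's lemma. As a first step I would make precise what those proofs deliver: as announced in the paragraph preceding the statement, the normalized quasi-score $\big((\sqrt{n}h^{1-1/\beta})^{-1}\p_{\al}\sqllf_{n}(\tz),\,n^{-1/2}\p_{\gam}\sqllf_{n}(\tz)\big)$ is shown in Section~\ref{sec_score.scle.proof} to converge $\mcf$-stably through Jacod's criterion for conditionally Gaussian martingales. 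Because that criterion yields stable, not merely weak, convergence, and because the Taylor expansion that passes from the quasi-score to $\tes$ preserves stability, one obtains both the consistency $\tes\cip\tz$ (anyway needed to identify the rate) and
\[
V_{n}:=\left(\sqrt{n}h^{1-1/\beta}(\aes-\al_{0}),~\sqrt{n}(\ges-\gam_{0})\right)\;\scl\;MN_{p}\!\left(0,\,\Gam_{T}(\tz;\beta)^{-1}\right).
\]

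Next I would establish $\hat{\Sig}_{T,\al,n}\cip\Sig_{T,\al}(\tz)$ and $\hat{\Sig}_{T,\gam,n}\cip\Sig_{T,\gam}(\gam_{0})$. The core is the pathwise fact that for any continuous $g:\mbbr\to\mbbr^{m}$ of at most polynomial growth,
\[
\frac{1}{n}\sumj g(X_{t_{j-1}})=\frac{1}{T}\sumj\int_{t_{j-1}}^{t_{j}}g(X_{t_{j-1}})\,ds\;\asc\;\frac{1}{T}\int_{0}^{T}g(X_{s})\,ds,
\]
since, for a.e.\ $\omega$, the map $s\mapsto g(X_{s})$ is bounded on $[0,T]$ (the \cadlag\ path $X$ is locally bounded) and continuous off a Lebesgue-null set, hence Riemann-integrable, so its left-endpoint Riemann sums on the uniform mesh of width $h$ converge. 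Applying this with $g(x)=\{\p_{\al}a(x,\al_{0})\}^{\otimes 2}/c^{2}(x,\gam_{0})$ and with $g(x)=\{\p_{\gam}c(x,\gam_{0})\}^{\otimes 2}/c^{2}(x,\gam_{0})$, both continuous with polynomial growth by Assumption~\ref{A_coeff}, gives convergence of the ``frozen-at-$\tz$'' sums to $\Sig_{T,\al}(\tz)$ and $\Sig_{T,\gam}(\gam_{0})$. To pass from $\tz$ to $\tes$ I would invoke Assumption~\ref{A_coeff}(3): on a ball $B\subset\Theta$ around $\tz$ the gradient in $\theta$ of $\frac{1}{n}\sumj\{\p_{\al}a_{j-1}(\al)\}^{\otimes 2}/c^{2}_{j-1}(\gam)$ has norm $\le\frac{1}{n}\sumj C(1+|X_{t_{j-1}}|^{C})$, which is $O_{p}(1)$ by the same Riemann-sum argument; combined with $\tes\cip\tz$ this yields the required replacement, and similarly for the $\gam$-block.

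Finally I would conclude by continuous mapping together with stable Slutsky. The matrix $\Gam_{T}(\tz;\beta)=\diag\big(C_{\al}(\beta)\Sig_{T,\al}(\tz),\,C_{\gam}(\beta)\Sig_{T,\gam}(\gam_{0})\big)$ is a.s.\ positive definite under the present assumptions --- its invertibility is already needed for the very statement of Theorem~\ref{sqmle.iv_bda_thm} --- and the matrix square-root is continuous on the set of positive definite matrices, so the previous step gives
\[
A_{n}:=\diag\!\left(\big(C_{\al}(\beta)\hat{\Sig}_{T,\al,n}\big)^{1/2},\,\big(C_{\gam}(\beta)\hat{\Sig}_{T,\gam,n}\big)^{1/2}\right)\;\cip\;\Gam_{T}(\tz;\beta)^{1/2}
\]
(defining $A_{n}$ arbitrarily on the asymptotically negligible event where a block is not positive definite). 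Since $\Gam_{T}(\tz;\beta)^{1/2}$ is $\mcf$-measurable and $V_{n}$ converges $\mcf$-stably, the pair $(A_{n},V_{n})$ converges $\mcf$-stably, and applying the continuous map $(a,v)\mapsto av$ we get $A_{n}V_{n}\scl\Gam_{T}(\tz;\beta)^{1/2}\cdot\Gam_{T}(\tz;\beta)^{-1/2}Z=Z$, where $Z\sim N_{p}(0,I_{p})$ is independent of $\mcf$ and we used the representation $MN_{p}(0,\Gam_{T}(\tz;\beta)^{-1})\eil\Gam_{T}(\tz;\beta)^{-1/2}Z$ of the mixed normal (see \cite{Jac97}). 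The law of the limit being the nonrandom $N_{p}(0,I_{p})$, this is ordinary convergence in distribution, namely \eqref{aqmle_amn_sn}. The only genuinely delicate point is the very first one --- being sure that the convergence extracted from the main proofs is \emph{stable} rather than merely weak --- but that is automatic from the conditionally Gaussian central limit theorem used there; everything after is the routine Riemann-sum law of large numbers, controlled by the polynomial bounds of Assumption~\ref{A_coeff}(3), plus standard stability properties.
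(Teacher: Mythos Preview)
Your proof is correct and follows essentially the same route as the paper: establish $\hat{\Sig}_{T,\al,n}\cip\Sig_{T,\al}(\tz)$ and $\hat{\Sig}_{T,\gam,n}\cip\Sig_{T,\gam}(\gam_{0})$, then combine this with the $\mcf$-stable convergence of the normalized estimator via the stable Slutsky and continuous-mapping theorems. The only cosmetic difference is that the paper obtains the convergence of $\hat{\Sig}_{T,\cdot,n}$ by invoking the ready-made uniform law of large numbers (Lemma~\ref{lem.aux.LLN}, which is uniform in $\theta$ and hence absorbs the replacement $\tz\rightsquigarrow\tes$ in one stroke together with the a.s.\ continuity of $\theta\mapsto(\Sig_{T,\al}(\theta),\Sig_{T,\gam}(\gam))$), whereas you give a direct pathwise Riemann-sum argument at $\tz$ and then a mean-value gradient bound to pass to $\tes$; both are equally valid and equally short.
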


Table \ref{table_qls} summarizes the rates of convergence of the $\beta$-stable maximum quasi-likelihood estimators with $\beta\le 2$, when the target SDE model is
\begin{equation}
dX_{t} = a(X_{t},\al)dt + c(X_{t-},\gam) dZ_{t}
\label{SDE.Z}
\end{equation}
for a driving {\lp} $Z$ with the correctly specified coefficient $(a,c)$; again, note that the Gaussian QMLE requires $T_{n}\to\infty$, which is not necessary for the SQMLE.
We refer to \cite{Mas13spa} for a handy statistic for testing the case (i) against the case (ii) based on the Gaussian QMLE.

\begin{table}[htb]
\centering
\begin{tabular}{lclccclc}
\hline\\[-2mm]
Quasi-likelihood  & & Driving {\lp} $Z$ & & \multicolumn{2}{c}{Rates of convergence} & & Ref. \\
  		     & & & & $\aes$ & $\ges$ & & \\[1mm] \hline
\\[-3mm]
(i) Gauss & & {\wp} & & $\sqrt{nh}$ & $\sqrt{n}$ & & \cite{Kes97} \\[-5pt] 
\\
(ii) Gauss & & {\lp} with jumps & & $\sqrt{nh}$ & $\sqrt{nh}$ & & \cite{Mas11}, \cite{Mas13as} \\[-5pt] 
\\
(iii) Non-Gaussian stable & & Locally $\beta$-stable {\lp} & & $\sqrt{n}h^{1-1/\beta}$ & $\sqrt{n}$ & &  \\[1mm] 
\hline
\end{tabular}
\bigskip
\caption{Comparison of the Gaussian ($\beta=2$) and non-Gaussian stable ($\beta\in[1,2)$) QMLE for the SDE \eqref{SDE.Z},
where the coefficient $(a,c)$ is correctly specified: Case (iii) is the contribution of this paper.}
\label{table_qls}
\end{table}

\begin{rem}{\rm 
We have been focusing on $\beta\ge 1$. For $\beta\in(0,1)$, direct use of the Euler scheme would spoil the proofs in Section \ref{sec_proofs} because small-time variation of $X$ by the noise term is dominated by that of the trend coefficient $a(x,\al)$.
In this case, direct use of the present stable quasi-likelihood based on the mere Euler scheme would be inadequate.
It would be necessary to take the drift structure into account more precisely, as in the trajectory-fitting estimator studied in \cite{Mas05jjss}.
}\qed\end{rem}

\subsection{Ergodic case under long-time asymptotics}\label{sec_ergodic}

In this section, instead of the bounded-domain asymptotics \eqref{fixed.T} we consider the sampling design
\begin{equation}
T_{n}\to\infty \quad\text{and}\quad \sqrt{n} h^{2-1/\beta}\to 0,
\label{large.T}
\end{equation}
which still implies that $\sqrt{n}h^{1-1/\beta}\to\infty$ when $\beta\in[1,2)$; for example, it suffices to have $T_{n}\to\infty$ and $nh^{2}\to 0$. Theorem \ref{sqmle_ergo_thm} below shows that under the ergodicity of $X$ the asymptotic normality of the SQMLE \eqref{sqmle_def} holds. The logic of construction of the stable quasi-likelihood is completely the same as in Section \ref{sec_SQL.heuristic}. 

We will adopt Assumption \ref{A_J.cf} for the structural assumptions on $J$, and impose Assumption \ref{A_coeff} without any change.

\begin{ass}[Stability]
\begin{enumerate}
\item There exists a unique invariant measure $\pi_{0}$ such that
\begin{equation}
\frac{1}{T}\int_{0}^{T}g(X_{t})dt \cip \int g(x)\pi_{0}(dx),\qquad T\to\infty,
\label{ergo_thm}
\end{equation}
for every measurable function $g$ of at most polynomial growth.
\item $\ds{\sup_{t\in\mbbrp}\E(|X_{t}|^{q})<\infty}$ for every $q>0$.
\end{enumerate}
\label{A_ergo}
\end{ass}

The property \eqref{ergo_thm} follows from the convergence $\|P_{t}(x,\cdot)-\pi_{0}(\cdot)\|_{TV}\to 0$ as $t\to\infty$ for each $x\in\mbbr$, where $P_{t}(x,dy)$ denotes the transition function of $X$ under the true measure and $\|\mu\|_{TV}$ the total variation norm of a signed measure $\mu$.
The next lemma, which directly follows from \cite[Proposition 5.4]{Mas13as}, provides a set of sufficient conditions for Assumption \ref{A_ergo}.

\begin{lem}
Let $X$ be given by \eqref{SDE} and suppose that $\nu(\{z\ne 0;\,|z|\le\ep\})>0$ for every $\ep>0$.
Further, assume the following conditions.
\begin{enumerate}
\item Both $a(\cdot,\al_{0})$ and $c(\cdot,\gam_{0})$ are of class $\mcc^{1}(\mbbr)$ and globally Lipschitz, and $c$ is bounded.
\item $c(x,\gam_{0})\ne 0$ for every $x$.
\item $\E(J_{1})=0$ and either one of the following conditions holds:
\begin{itemize}
\item $\E(|X_{0}|^{q})<\infty$ and $\int_{|z|>1}|z|^{q}\nu(dz)<\infty$ for every $q>0$, and
\begin{equation}
\limsup_{|x|\to\infty}\frac{a(x,\al_{0})}{x}<0.
\nonumber
\end{equation}
\item $\E(e^{q|X_{0}|})<\infty$ and $\int_{|z|>1}e^{q|z|}\nu(dz)<\infty$ for some $q>0$, and
\begin{equation}
\limsup_{|x|\to\infty}{\rm sgn}(x)a(x,\al_{0})<0.
\nonumber
\end{equation}
\end{itemize}
\end{enumerate}
Then Assumption \ref{A_ergo} holds.
\label{lem_ergo}
\end{lem}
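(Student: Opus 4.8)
The assertion is, in essence, a specialization of \cite[Proposition 5.4]{Mas13as} to the present setting, so the plan is to check that the stated conditions (1)--(3) are (at least) as strong as the hypotheses used there, and then to spell out how the three requirements in Assumption \ref{A_ergo} are produced. Concretely, I would establish in turn: (a) existence of a unique invariant probability measure $\pi_{0}$ together with total-variation ergodicity $\|P_{t}(x,\cdot)-\pi_{0}\|_{TV}\to 0$ for each $x$; (b) the law of large numbers \eqref{ergo_thm} for polynomially growing $g$; and (c) the uniform moment bound $\sup_{t}\E(|X_{t}|^{q})<\infty$ for all $q>0$.

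For (a) I would run the Foster--Lyapunov/Meyn--Tweedie scheme for the SDE \eqref{SDE}. Irreducibility of $X$ and the petite-set property of compact sets follow from the nondegeneracy $c(x,\gam_{0})\neq 0$, the $\mcc^{1}$-regularity and global Lipschitz property of $(a(\cdot,\al_{0}),c(\cdot,\gam_{0}))$, and the assumption $\nu(\{z\neq 0;\,|z|\le\ep\})>0$ for every $\ep>0$, which guarantees that the driving noise can drive the solution into an arbitrarily small neighbourhood of any point in finite time; these are precisely the ingredients of \cite{Mas13as}. The drift inequality comes from a Lyapunov function $V$: in the polynomial-moment case take $V(x)=(1+x^{2})^{q/2}$, and in the exponential-moment case $V(x)=e^{q|x|}$ with $q>0$ small. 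Applying the extended generator $\mcl$ of \eqref{SDE} to $V$ and using $\E(J_{1})=0$ (so, by symmetry of $\nu$, the small-jump first-order contribution vanishes and the remainder is controlled by $V''$ and $\int_{|z|\le 1}z^{2}\nu(dz)<\infty$, finite since $\beta<2$), together with the moment condition $\int_{|z|>1}|z|^{q}\nu(dz)<\infty$ (resp. $\int_{|z|>1}e^{q|z|}\nu(dz)<\infty$) bounding the large-jump part, the hypothesis $\limsup_{|x|\to\infty}a(x,\al_{0})/x<0$ (resp. $\limsup_{|x|\to\infty}{\rm sgn}(x)a(x,\al_{0})<0$) yields $\mcl V(x)\le -c_{0}V(x)+C_{0}$ outside a compact set. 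This gives geometric ergodicity, hence the stated total-variation convergence and a unique $\pi_{0}$; in particular the remark immediately following the lemma is justified.

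For (b), started from $\pi_{0}$ the process is stationary and, by the total-variation convergence of $P_{t}$, ergodic in the ergodic-theoretic sense, so Birkhoff's theorem gives $\frac{1}{T}\int_{0}^{T}g(X_{t})dt\to\int g\,d\pi_{0}$ a.s. for $g\in L^{1}(\pi_{0})$; a standard coupling/splitting argument using the total-variation convergence transfers this to convergence in $\pr$-probability for an arbitrary $\mcf_{0}$-measurable $X_{0}$, and polynomially growing $g$ lies in $L^{1}(\pi_{0})$ because $\pi_{0}$ has finite moments of every order (a by-product of (c), e.g. by Fatou along $t\to\infty$). For (c), I would apply It\^o's formula to $V_{q}(x)=(1+x^{2})^{q/2}$, take expectations, and use the same computation as in (a) to obtain $\frac{d}{dt}\E V_{q}(X_{t})\le -c_{q}\E V_{q}(X_{t})+C_{q}$ for constants $c_{q},C_{q}>0$; Gronwall's inequality then gives $\sup_{t\ge 0}\E V_{q}(X_{t})\le \E V_{q}(X_{0})\vee(C_{q}/c_{q})<\infty$ from $\E(|X_{0}|^{q})<\infty$, whence $\sup_{t}\E(|X_{t}|^{q})<\infty$ for every $q>0$.

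The main obstacle is the careful estimate of the jump part of $\mcl V$: one must split $\nu$ at $|z|=1$, exploit symmetry of $\nu$ to kill the first-order small-jump term and control the residual via $\sup_{x}|V''(x)|\int_{|z|\le 1}z^{2}\nu(dz)$, bound the large-jump contribution by $\int_{|z|>1}|z|^{q}\nu(dz)$ (resp. $\int_{|z|>1}e^{q|z|}\nu(dz)$), and then verify that for $|x|$ large the negative term inherited from $a(x,\al_{0})$ strictly dominates these bounded/lower-order contributions, uniformly enough to yield the drift inequality. Once this is in place, the remaining arguments are the routine Meyn--Tweedie and Gronwall steps; and since \cite[Proposition 5.4]{Mas13as} is already formulated for exactly the SDE \eqref{SDE}, in practice the verification reduces to matching the present hypotheses against those of that proposition.
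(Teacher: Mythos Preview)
Your proposal is correct and matches the paper's approach: the paper does not give a separate proof at all, stating only that the lemma ``directly follows from \cite[Proposition 5.4]{Mas13as}.'' Your write-up simply expands on what that citation contains (the Foster--Lyapunov/Meyn--Tweedie verification with polynomial or exponential Lyapunov functions), which is accurate but more than the paper itself supplies.
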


We also need a variant of Assumption \ref{A_iden}.

\begin{ass}[Model identifiability]
The functions $x\mapsto\left(a(x,\al), c(x,\gam)\right)$ and $x\mapsto\left(a(x,\al_{0}), c(x,\gam_{0})\right)$ coincide $\pi_{0}$-a.e. if and only if $\theta=\tz$.
\label{A_ergo.iden}
\end{ass}

\begin{thm}
Suppose that Assumptions \ref{A_J.cf} holds with
\begin{equation}
\sqrt{n}(\ep_{\psi}(h) \vee h^{(\del/\beta)\wedge 1})^{\frac{\beta}{\beta+r}} \to 0.
\nonumber
\end{equation}
Suppose also that Assumptions \ref{A_coeff}, \ref{A_ergo}, and \ref{A_ergo.iden} hold. Then, under \eqref{large.T} we have
\begin{equation}
\left(
\sqrt{n}h^{1-1/\beta}(\hat{\al}_{n}-\al_{0}), \, \sqrt{n}(\hat{\gam}_{n}-\gam_{0})
\right)
\cil N_{p}\left(0,\, \diag\left(V_{\al}(\theta_{0};\beta)^{-1},V_{\gam}(\theta_{0};\beta)^{-1}\right)\right),
\nn
\end{equation}
where
\begin{align}
V_{\al}(\theta_{0};\beta)&:= C_{\al}(\beta)\int\frac{\{\p_{\al}a(x,\al_{0})\}^{\otimes 2}}{c(x,\gam_{0})^{2}}\pi_{0}(dx), \nonumber \\
V_{\gam}(\theta_{0};\beta)&:= C_{\gam}(\beta)\int\frac{\{\p_{\gam}c(x,\gam_{0})\}^{\otimes 2}}{c(x,\gam_{0})^{2}}\pi_{0}(dx).
\nonumber
\end{align}
\label{sqmle_ergo_thm}
\end{thm}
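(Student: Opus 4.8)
The plan is to adapt the proof of Theorem \ref{sqmle.iv_bda_thm} (respectively Theorem \ref{sqmle.iv_bda_thm.cf}), replacing the role of the random time-averages $\Sig_{T,\al}(\tz)$, $\Sig_{T,\gam}(\gam_0)$ by their deterministic limits obtained from the ergodic theorem, so that the asymptotic distribution becomes genuinely normal rather than mixed normal. Since Assumption \ref{A_ergo}(2) supplies uniform-in-time moments of all orders for $X$, the moment bounds used in the bounded-domain proof (which were obtained there by localization) now hold outright, and no truncation of the support of $\nu$ is needed; the $L^1$-local limit bound \eqref{llt.imp.2} is guaranteed directly by Lemma \ref{key.lemma.cf} under the stated rate condition $\sqrt{n}(\ep_{\psi}(h)\vee h^{(\del/\beta)\wedge 1})^{\beta/(\beta+r)}\to 0$.

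First I would establish the analogue of the quasi-score convergence. Writing the normalized quasi-score $\big((\sqrt{n}h^{1-1/\beta})^{-1}\p_{\al}\sqllf_{n}(\tz),\, n^{-1/2}\p_{\gam}\sqllf_{n}(\tz)\big)$ as a sum of martingale-difference arrays with respect to $(\mcf_{t_j})$, one replaces $h^{-1/\beta}\D_j J$ by $\ep_j(\tz)$ up to the Euler error and then uses \eqref{llt.imp.2} together with the symmetry-driven centering identities $\int g_\beta f_h = \int k_\beta f_h = 0$. The predictable quadratic variation of the $\al$-block is $\frac{1}{n}\sumj C_\al(\beta)\frac{\{\p_\al a_{j-1}(\al_0)\}^{\otimes 2}}{c^2_{j-1}(\gam_0)}(1+o_p(1))$; under Assumption \ref{A_ergo}(1) applied to the polynomial-growth integrand $x\mapsto \{\p_\al a(x,\al_0)\}^{\otimes 2}/c^2(x,\gam_0)$ (polynomial growth by Assumption \ref{A_coeff}(3)), this converges in probability to $V_\al(\theta_0;\beta)$, and similarly the $\gam$-block converges to $V_\gam(\theta_0;\beta)$; the cross term vanishes, exactly as in the bounded-domain case. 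Because the limit is now \emph{deterministic}, one applies the standard martingale central limit theorem (Lindeberg condition again following from the uniform moments and boundedness of $g_\beta,k_\beta$) rather than Jacod's conditional-Gaussian criterion, obtaining asymptotic normality of the score.

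Next I would show that the normalized quasi-observed-information $-\p^2_\theta\sqllf_n(\tz)$, suitably rescaled by $\diag(\sqrt{n}h^{1-1/\beta},\sqrt{n})$ on both sides, converges in probability to $\diag(V_\al(\theta_0;\beta),V_\gam(\theta_0;\beta))$; this uses the same Taylor-expansion and ergodic-averaging arguments, plus negligibility of the off-diagonal and higher-order remainder terms (controlled uniformly in a shrinking neighbourhood of $\tz$ via Assumption \ref{A_coeff}(2)--(3) and Sobolev/maximal inequalities for the arrays). Consistency of $\tes$ follows from Assumption \ref{A_ergo.iden} together with a uniform law of large numbers for $n^{-1}\sqllf_n(\theta)-n^{-1}\sqllf_n(\tz)$ on $\overline\Theta$, whose limiting contrast is minimized uniquely at $\tz$; here ergodicity replaces the pathwise identifiability Assumption \ref{A_iden}. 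Combining consistency, the score CLT, and the information convergence via the standard argument (expand $\p_\theta\sqllf_n(\tes)=0$ around $\tz$, invert) yields the claimed convergence with the stated block-diagonal inverse covariance.

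The main obstacle will be the uniform control of the various remainder arrays under the \emph{long-time} regime: unlike the bounded-domain case, one can no longer localize on a compact-path event, so all the $O_p$- and $o_p$-estimates (in particular the Euler-scheme errors $\ep_j(\tz)-h^{-1/\beta}\D_j J$, the difference between $f_h$ and $\phi_\beta$ integrated against the score functions, and the neighbourhood-uniform bounds on third derivatives) must be made with constants that grow slowly enough to be absorbed by $T_n\to\infty$ while $\sqrt{n}h^{2-1/\beta}\to 0$. Verifying that the rate condition \eqref{large.T} is exactly what is needed to kill the Euler bias in the $\al$-direction (which scales like $\sqrt{n}h^{1-1/\beta}\cdot h = \sqrt{n}h^{2-1/\beta}$) is the delicate point, but it is dictated transparently by the scaling, and the remaining estimates go through by combining Assumption \ref{A_ergo}(2) with the Burkholder--Davis--Gundy and Sobolev inequalities exactly as in Section \ref{sec_proofs}.
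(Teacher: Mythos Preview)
Your proposal is correct and follows essentially the same route as the paper's own proof: adapt the bounded-domain argument, dispense with localization by invoking the uniform moment bounds from Assumption \ref{A_ergo}(2), replace the pathwise law of large numbers (Lemma \ref{lem.aux.LLN}) by its ergodic counterpart, and use the classical martingale CLT in place of Jacod's stable-convergence criterion since the limiting Fisher information is now deterministic. One small correction: the identity $\int k_{\beta}f_{h}=0$ is not a symmetry fact ($k_{\beta}$ is even), so the $\gamma$-score centering comes instead from $\int k_{\beta}\phi_{\beta}=0$ combined with \eqref{llt.imp.2}, which you already cite and which gives $\int k_{\beta}f_{h}=o(n^{-1/2})$; this is exactly how the paper handles it in \eqref{sclt.proof-5}.
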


The proof of Theorem \ref{sqmle_ergo_thm} will be sketched in Section \ref{sec_ergo.an.proof}.
Obviously, Studentization is possible just the same as in Corollary \ref{sqmle.iv_bda_thm.cor}.
Again we remark that Assumption \ref{A_J.cf} could be replaced with any other one implying the convergences \eqref{llt.imp.1} and \eqref{llt.imp.2}.

\begin{rem}{\rm 
We have $C_{\al}(2)=1$ and $C_{\gam}(2)=2$, hence taking $\beta=2$ in the expressions of $V_{\al}(\tz)$ and $V_{\gam}(\tz)$ formally results in the asymptotic Fisher information matrices for the diffusion case \cite{Kes97} (also \cite{UchYos12}).
}\qed\end{rem}

\section{Numerical experiments}\label{sec_simulations}

For simulations, we use the nonlinear data-generating SDE
\begin{equation}
dX_{t} = \bigg(\al_{1}X_{t}+ \frac{\al_{2}}{1+X_{t}^{2}}\bigg) dt 
+ \exp\big\{\gam_{1}\cos(X_{t}) + \gam_{2}\sin(X_{t})\big\} dJ_{t},\qquad X_{0}=0,
\nn
\end{equation}
with $\theta=(\al_{1},\al_{2},\gam_{1},\gam_{2})$ and $J$ being either:
\begin{itemize}
\item The normal inverse Gaussian {\lp} (Example \ref{ex.gh});
\item The $1.5$-stable {\lp es} (Example \ref{ex.cgn}).
\end{itemize}
The setting is a special case of $a(x,\al)=\al_{1}a_{1}(x)+\al_{2}a_{2}(x)$ and $c(x,\gam)=\exp\{\gam_{1}c_{1}(x)+\gam_{2}c_{2}(x)\}$, for which the asymptotic covariances of the $\sqrt{n}h^{1-1/\beta}(\hat{\al}_{k,n}-\al_{k,0})$ and $\sqrt{n}(\hat{\gam}_{l,n}-\gam_{l,0})$ are given by the inverses of
\begin{equation}
C_{\al}(\beta)\frac{1}{T}\int_{0}^{T}\frac{a_{k}^{2}(X_{t})}{c^{2}(X_{t},\gam_{0})}dt\qquad\text{and}\qquad
C_{\gam}(\beta)\frac{1}{T}\int_{0}^{T}c_{l}^{2}(X_{t})dt,
\nonumber
\end{equation}
respectively.

\subsection{Normal inverse Gaussian driver}\label{sec_sim.nig.J}

Let $J$ be an normal inverse Gaussian (NIG) {\lp} such that
\begin{equation}
\mcl(J_{t})=NIG(\eta,0,t,0),
\nonumber
\end{equation}
where $\eta>0$ may be unknown.
This is a special case of the generalized hyperbolic {\lp} considered in Example \ref{ex.gh} with $\lam=-1/2$.
The numerical results below show that the SQMLE effectively works.

We set $\eta=5$ or $10$; the bigger $\eta$ leads to a lighter tail of $\D_{j}J$, hence a seemingly more ``diffusion-like'' sample-path behavior. Also, we set the terminal time $T=1$ or $5$. For each pair $(\eta,T)$, we proceed as follows.
\begin{itemize}
\item First we apply the Euler scheme for the true model with discretization step size being $\D:=T/(3000\times 50)$.
\item Then we thin generated single path $(X_{k\D})_{k=0}^{3000\times 50}$ to pick up $(X_{jh})_{j=0}^{n}$ with $h=\D \times 50\times 6$, $\D\times 50\times 3$ and $\D\times 50$ for $n=500$, $1000$ and $3000$, respectively.
\end{itemize}
Here, the number ``$50$'' of generation over each sub-periods $(t_{j-1},t_{j}]$ reflects that $X$ virtually continuously evolves as time goes along, though not observable.
We independently repeat the above procedures for $L=1000$ times to get $1000$ independent estimates $\tes=(\aes,\ges)$, based on which boxplots and histograms for Studentized versions are computed (Corollary \ref{sqmle.iv_bda_thm.cor}). We used the function \texttt{optim} in R \cite{R13}, and in each optimization for $l=1,\dots,L$ we generated independent uniform random numbers $\mathrm{Unif}(\al_{k,0}-10,\al_{k,0}+10)$ and $\mathrm{Unif}(\gam_{0,l}-10,\gam_{0,l}+10)$ for initial values for searching $\al_{k}$ and $\gam_{l}$, respectively.

The two cases are conducted:
\begin{itemize}
\item[(i)] We know {\it a priori} that $\al_{2,0}=\gam_{2,0}=0$, and the estimation target is $\tz=(\al_{1,0},\gam_{1,0})=(-1,1.5)$;
\item[(ii)] Estimation target is $\tz=(\al_{1,0},\al_{2,0},\gam_{1,0},\gam_{2,0})=(-1,1,1.5,0.5)$.
\end{itemize}
From the obtained simulation results, we observed the following.
\begin{itemize}
\item Figures \ref{fig:bp1*1_4panels} and \ref{fig:hist1*1}: case of (i).
\begin{itemize}
\item The boxplots show the clear tendency that estimation accuracy for each $T$ gets better for larger $n$.
\item The histograms show overall good standard normal approximations; the straight line in red is the target standard normal density.
It is observed that the estimation performance of $\ges$ gets worse if the nuisance parameter $\eta$ gets larger from $5$ to $10$. In particular, for the cases where $\eta=10$ we can see downward bias of the Studentized $\ges$, although it disappears as $n$ increases.
\end{itemize}
Overall, we see very good finite-sample performance of $\aes$, while that of $\ges$ may be affected to some extent by the value of $(T,\eta)$.
As in the case of estimation of the diffusion coefficient for a diffusion type processes, for better estimation of $\gam$ the value $T$ should not be so large, equivalently $h$ should not be so large.
\item Figures \ref{fig:bp2*2_4panels} and Figures \ref{fig:hist2*2-1}--\ref{fig:hist2*2-2}: case of (ii).
\begin{itemize}
\item General tendencies are the same as in the previous case: for each $T$, estimate accuracy gets better for larger $n$, while the gain of estimation accuracy for larger $n$ is somewhat smaller compared with the previous case.
\item The histograms show that, compared with the previous case, the Studentized estimators are of heavier tails and asymptotic bias associated with  $\ges$ severely remains, especially for $(T,\eta)=(5,10)$ (Figure \ref{fig:hist2*2-2}), unless $n$ is large enough.
\end{itemize}
\end{itemize}

\medskip



\begin{figure}[htbp]
 \begin{minipage}{0.49\hsize}
  \begin{center}
   \includegraphics[scale=0.32]{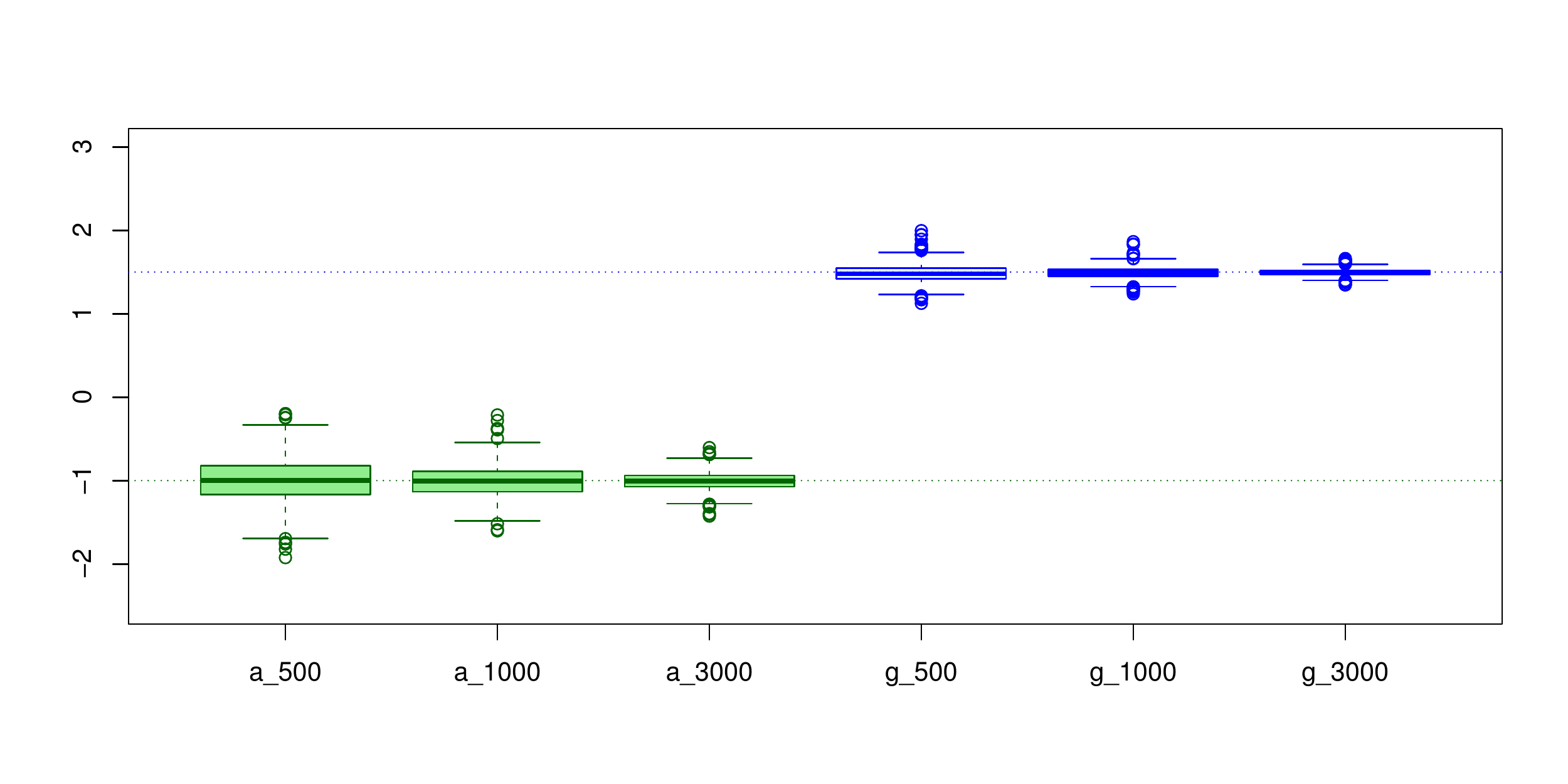}
  \end{center}
 \end{minipage}
 \begin{minipage}{0.49\hsize}
  \begin{center}
   \includegraphics[scale=0.32]{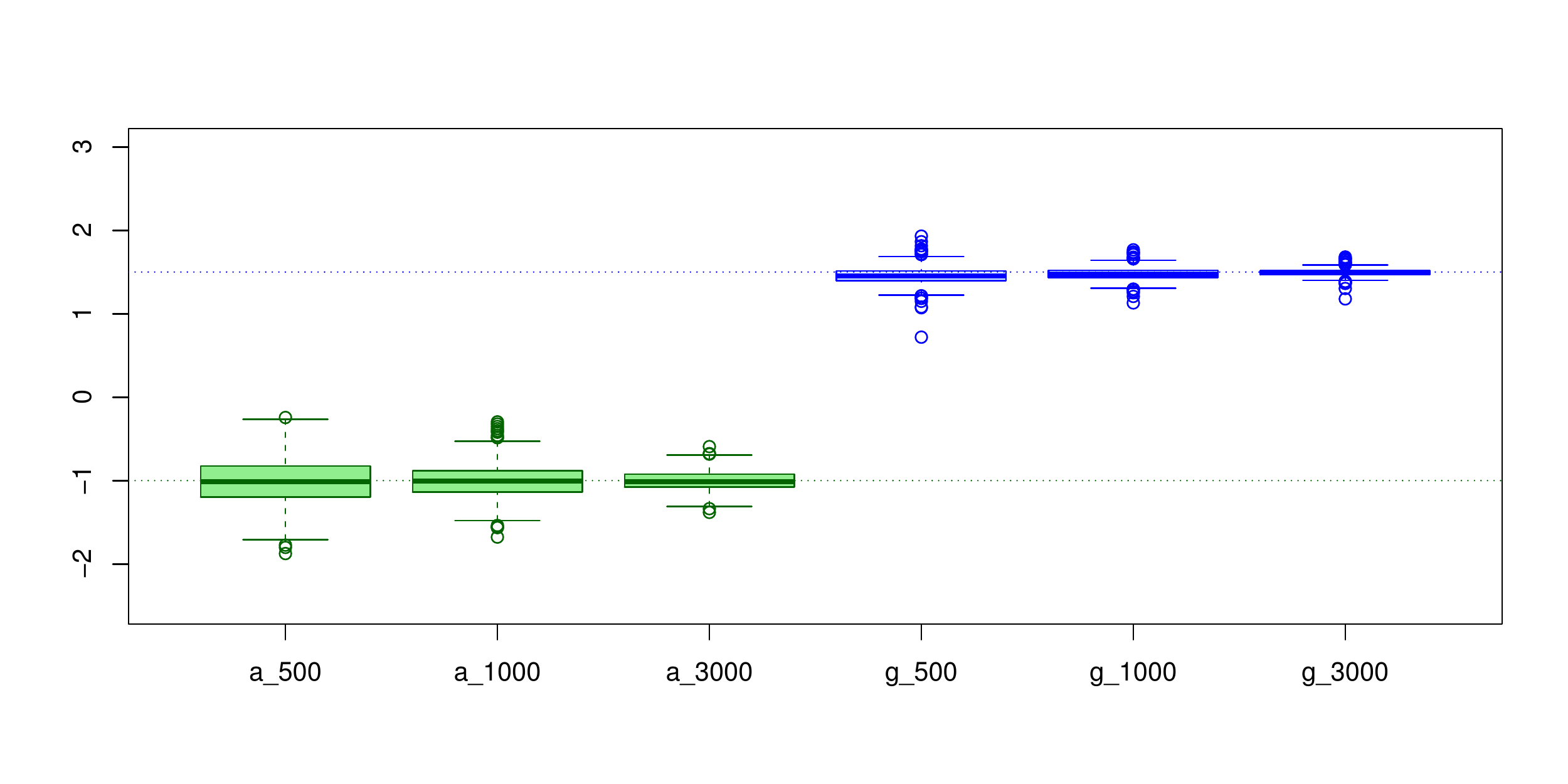}
  \end{center}
 \end{minipage}
 \begin{minipage}{0.49\hsize}
  \begin{center}
   \includegraphics[scale=0.32]{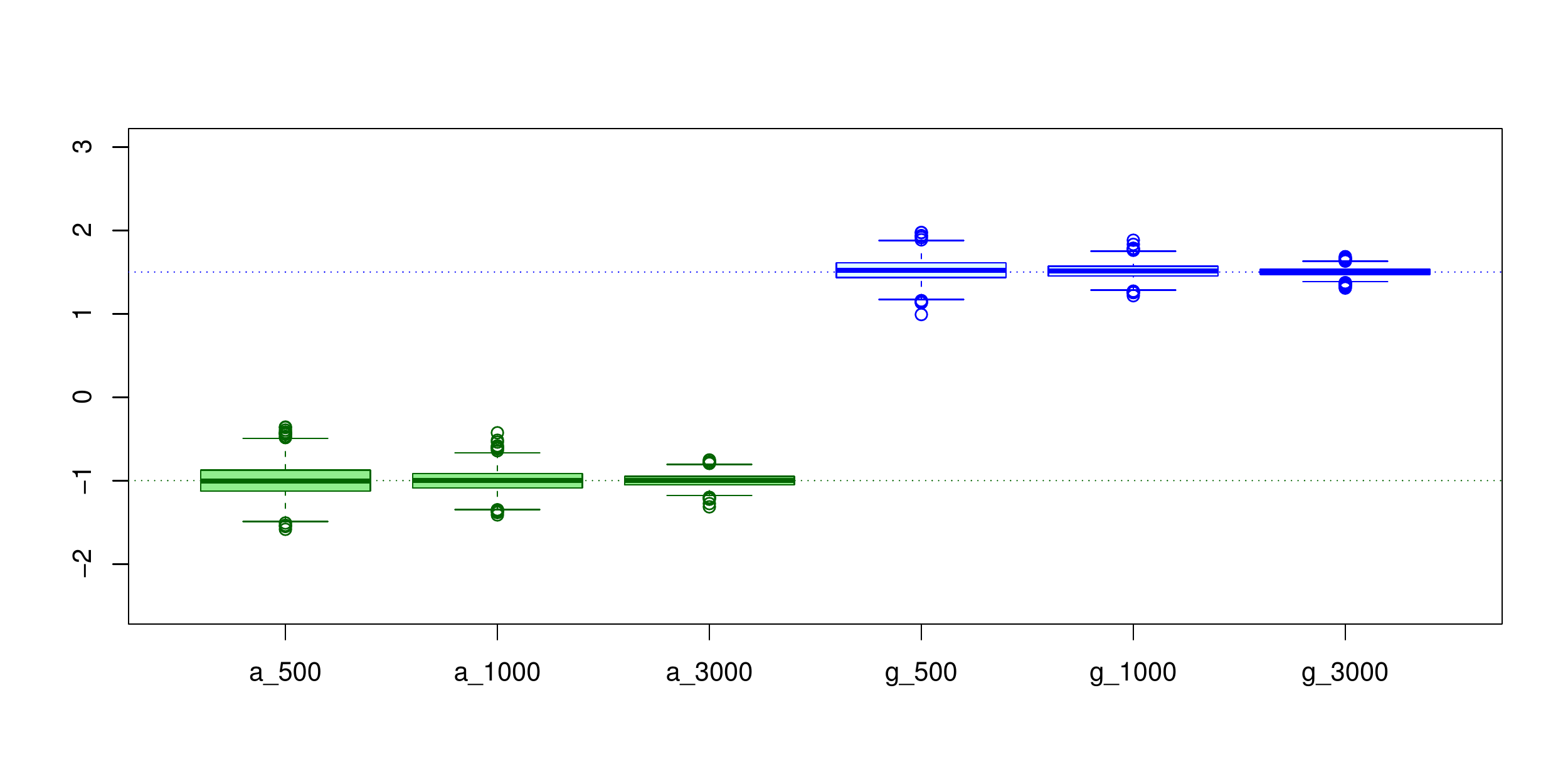}
  \end{center}
 \end{minipage}
 \begin{minipage}{0.49\hsize}
  \begin{center}
   \includegraphics[scale=0.32]{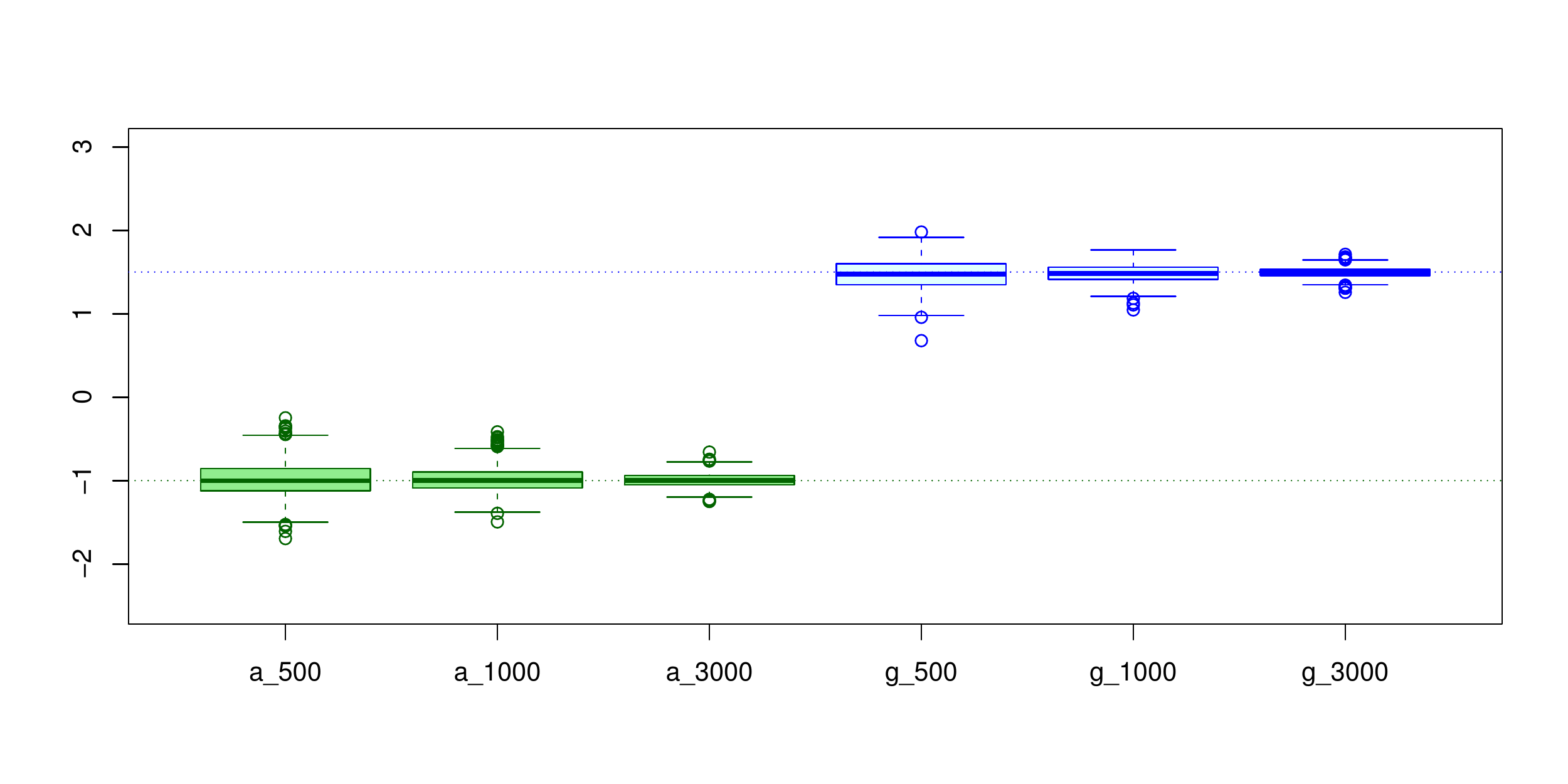}
  \end{center}
 \end{minipage}
 \caption{NIG-$J$ example. 
 Boxplots of $1000$ independent estimates $\aes$ (green) and $\ges$ (blue) for $n=500$, $1000$, $3000$; $(T,\eta)=(1,5)$ (upper left), $(T,\eta)=(1,10)$ (upper right), $(T,\eta)=(5,5)$ (lower left), and $(T,\eta)=(5,10)$ (lower right).}
 \label{fig:bp1*1_4panels}
\end{figure}

\medskip


\begin{figure}[htbp]
 \begin{minipage}{0.49\hsize}
  \begin{center}
   \includegraphics[scale=0.3]{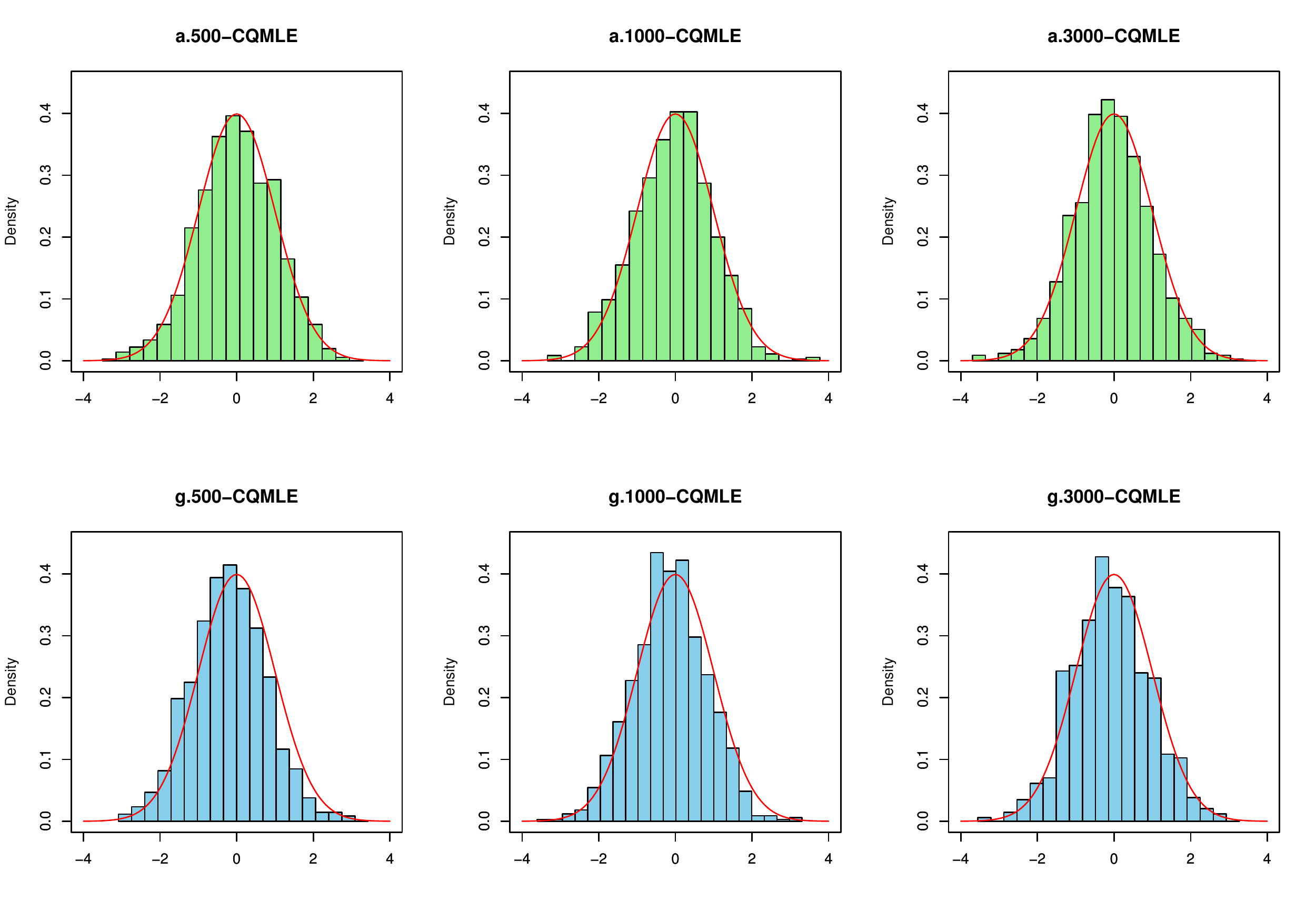}
  \end{center}
 \end{minipage}
 \begin{minipage}{0.49\hsize}
  \begin{center}
   \includegraphics[scale=0.3]{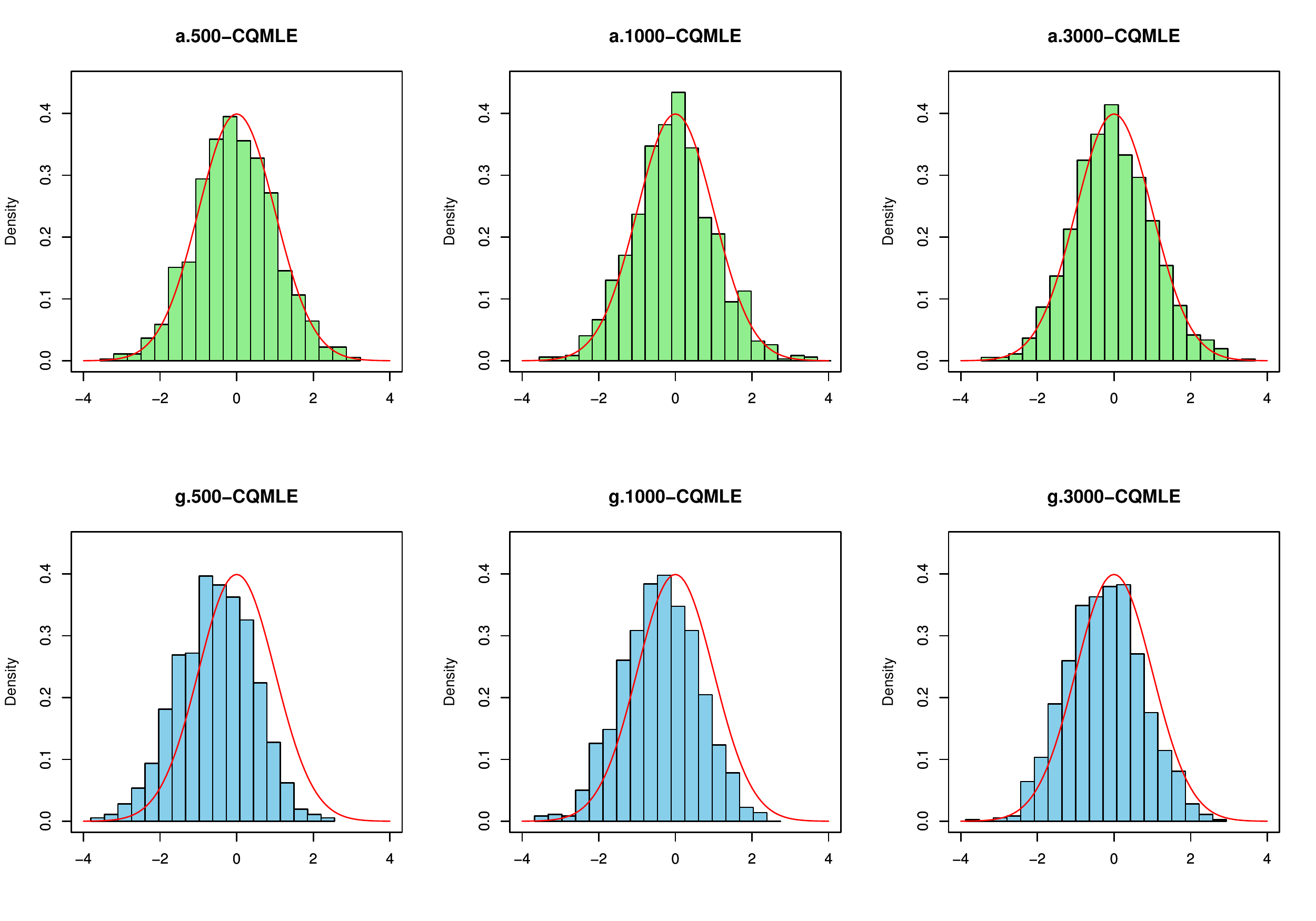}
  \end{center}
 \end{minipage}
	\bigskip
	
\begin{minipage}{0.49\hsize}
  \begin{center}
   \includegraphics[scale=0.3]{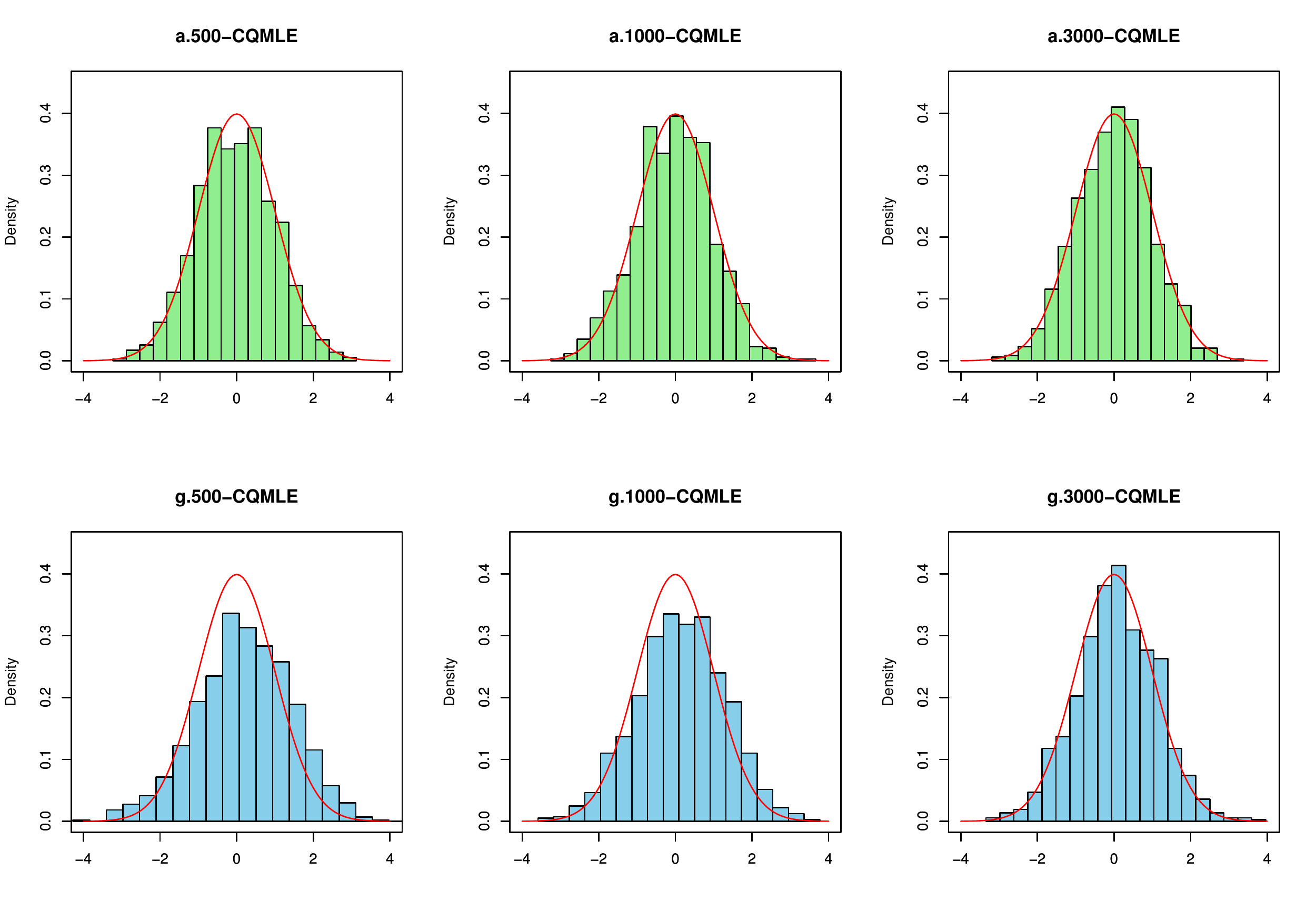}
  \end{center}
 \end{minipage}
 \begin{minipage}{0.49\hsize}
  \begin{center}
   \includegraphics[scale=0.3]{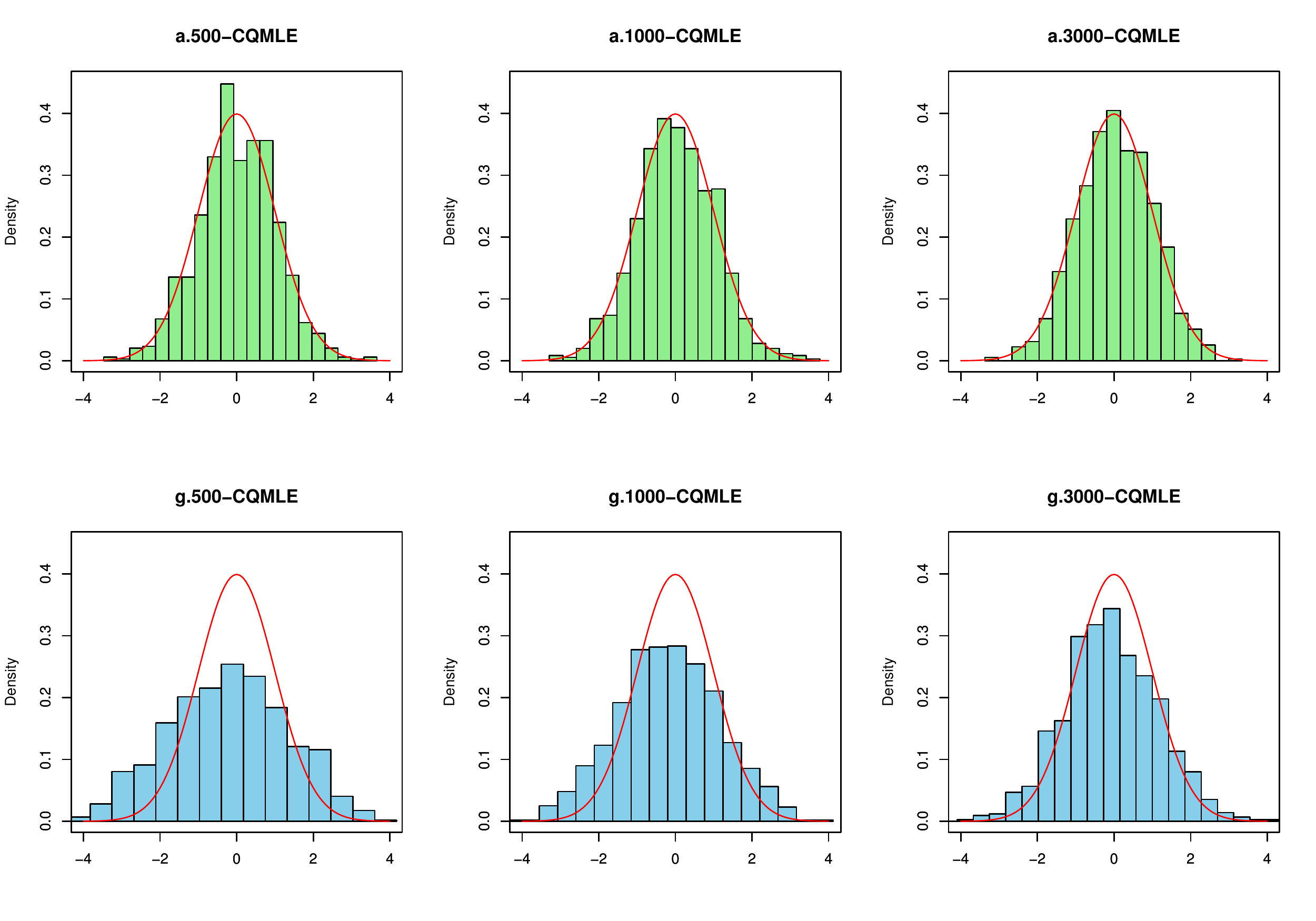}
  \end{center}
 \end{minipage}
\caption{NIG-$J$ example. 
Histograms of $1000$ independent Studentized estimates of $\al$ (green) and $\gam$ (blue) for $n=500$, $1000$, $3000$; $(T,\eta)=(1,5)$ (upper left $2 \times 3$ submatrix), $(T,\eta)=(1,10)$ (upper right $2 \times 3$ submatrix), $(T,\eta)=(5,5)$ (lower left $2 \times 3$ submatrix), and $(T,\eta)=(5,10)$ (lower right $2 \times 3$ submatrix).}
\label{fig:hist1*1}
\end{figure}

\medskip


\begin{figure}[htbp]
\begin{center}
  \includegraphics[scale=0.45]{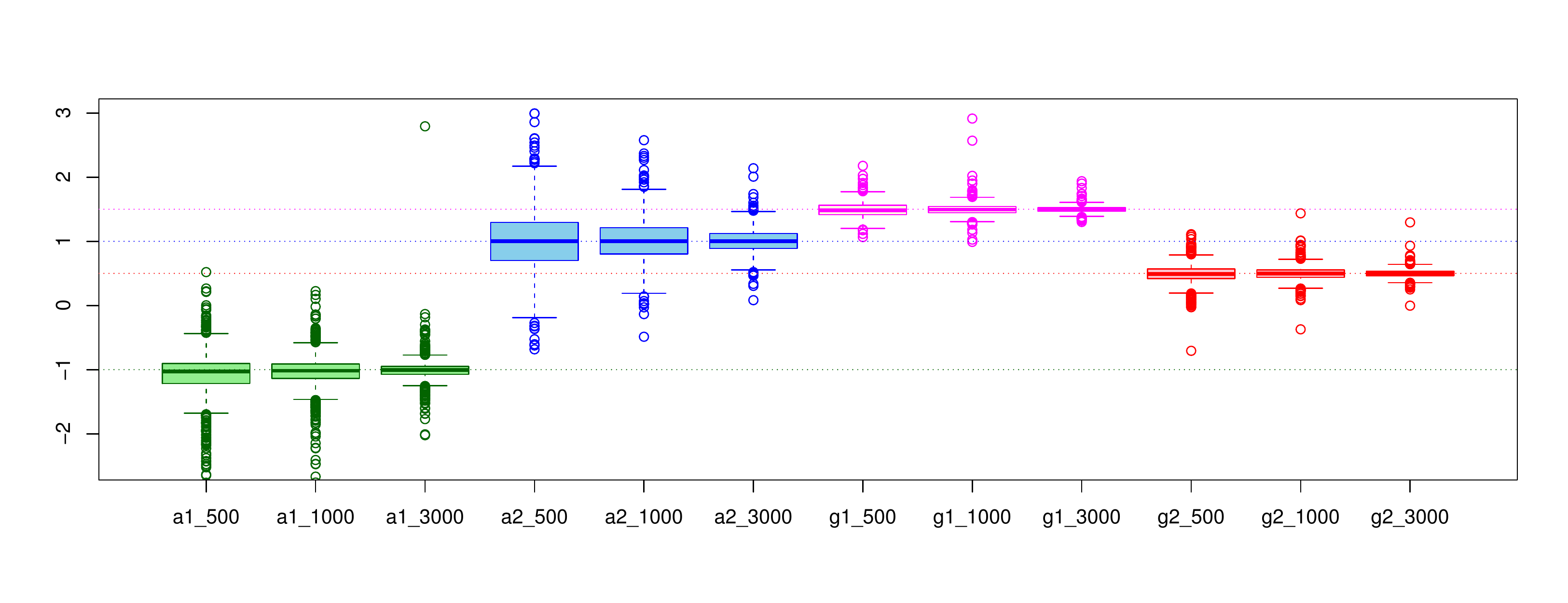}
\end{center}
\begin{center}
  \includegraphics[scale=0.45]{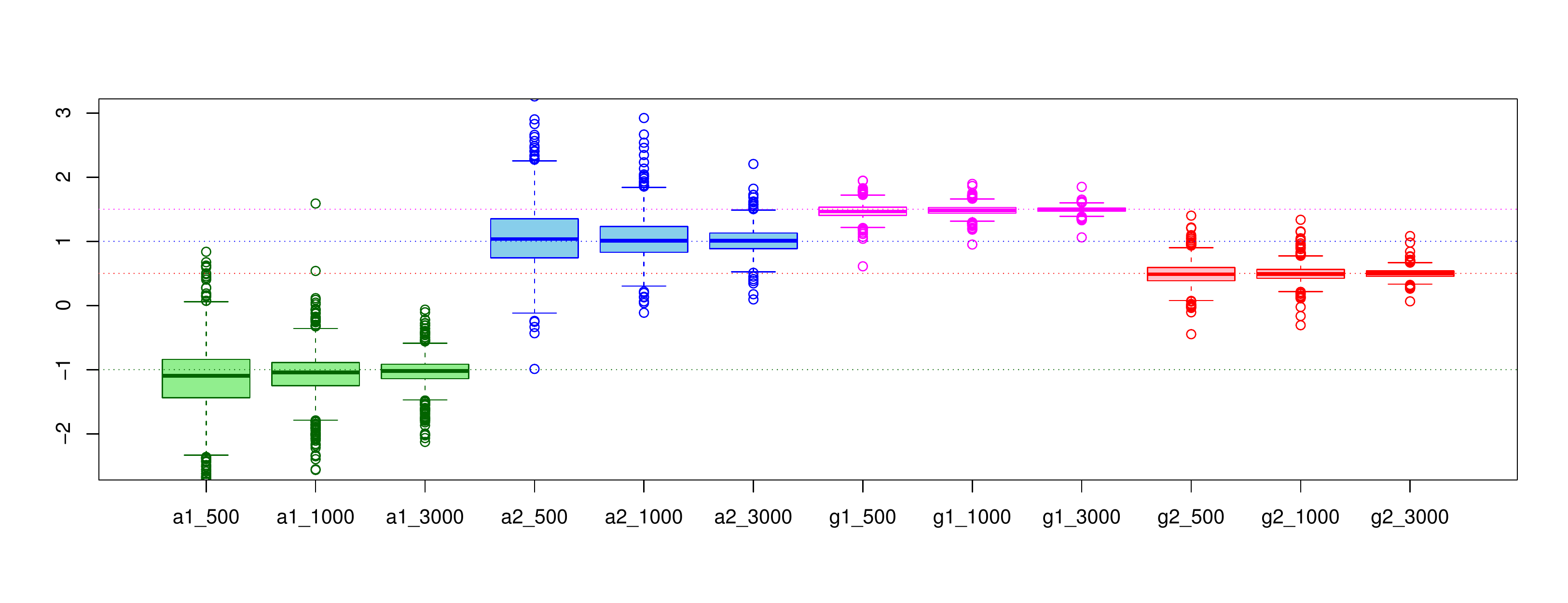}
\end{center}
\begin{center}
  \includegraphics[scale=0.45]{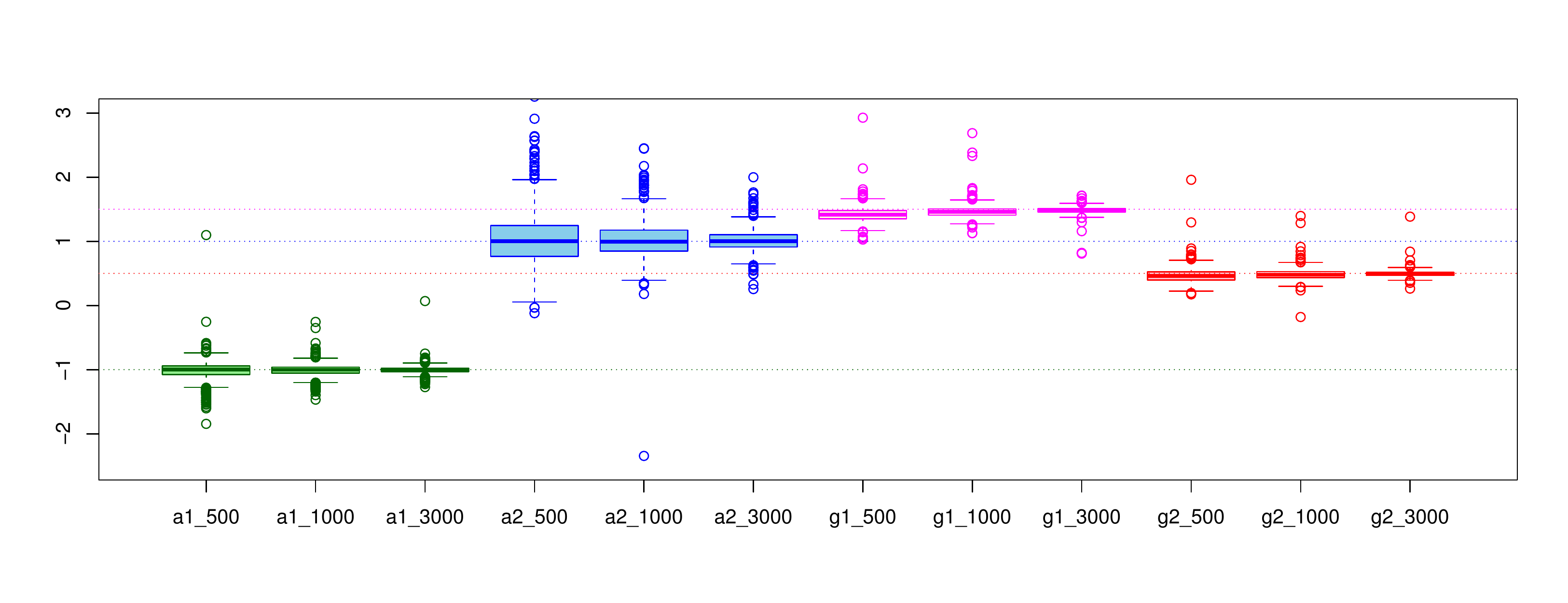}
\end{center}
\begin{center}
  \includegraphics[scale=0.45]{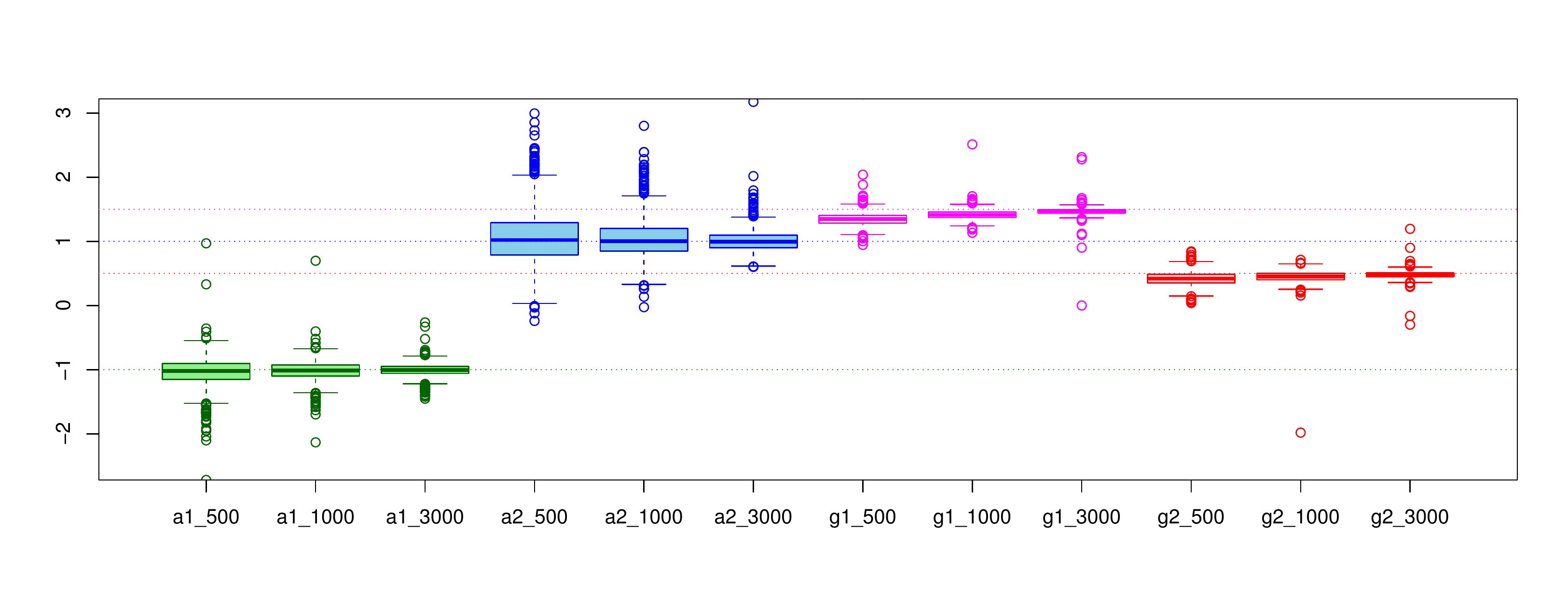}
\end{center}
 \caption{NIG-$J$ example. 
 Boxplots of $1000$ independent estimates $\hat{\al}_{1,n}$ (green), $\hat{\al}_{2,n}$ (blue), $\hat{\gam}_{1,n}$ (pink) and $\hat{\gam}_{2,n}$ (red) for $n=500$, $1000$, $3000$; $(T,\eta)=(1,5)$ (top), $(T,\eta)=(1,10)$ (second from the top), $(T,\eta)=(5,5)$ (second from the bottom), and $(T,\eta)=(5,10)$ (bottom).}
 \label{fig:bp2*2_4panels}
\end{figure}

\medskip


\begin{figure}[htbp]
 \begin{minipage}{0.49\hsize}
  \begin{center}
   \includegraphics[scale=0.3]{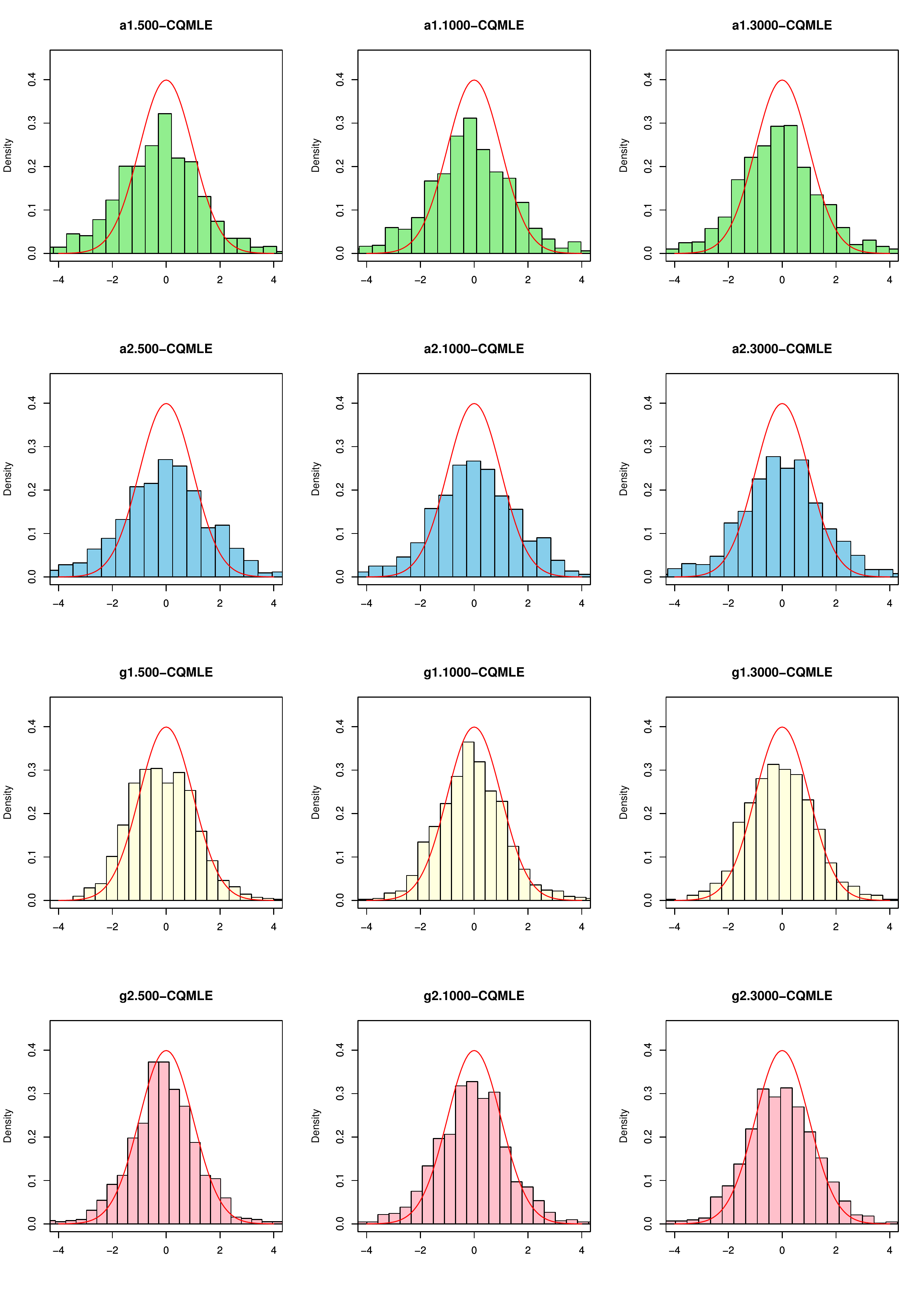}
  \end{center}
 \end{minipage}
 \begin{minipage}{0.49\hsize}
  \begin{center}
   \includegraphics[scale=0.3]{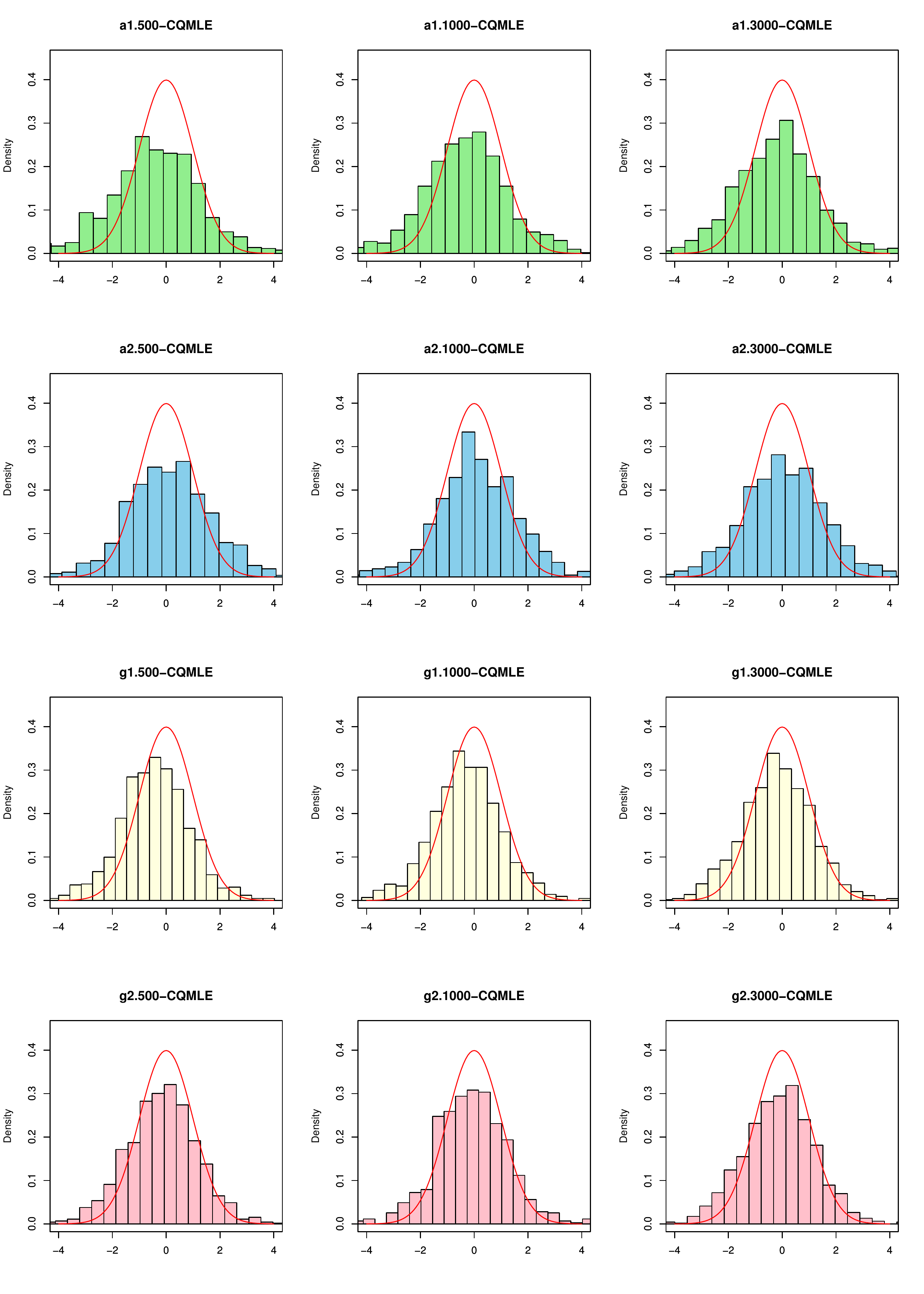}
  \end{center}
 \end{minipage}
\caption{NIG-$J$ example. 
Histograms of $1000$ independent Studentized estimates of $\al_{1}$ (green), $\al_{2}$ (blue), $\gam_{1}$ (cream) and $\gam_{2}$ (red) for $n=500$, $1000$, $3000$; $(T,\eta)=(1,5)$ (left $4 \times 3$ submatrix) and $(T,\eta)=(1,10)$ (right $4 \times 3$ submatrix).}
\label{fig:hist2*2-1}
\end{figure}

\medskip

\begin{figure}[htbp]
 \begin{minipage}{0.49\hsize}
  \begin{center}
   \includegraphics[scale=0.3]{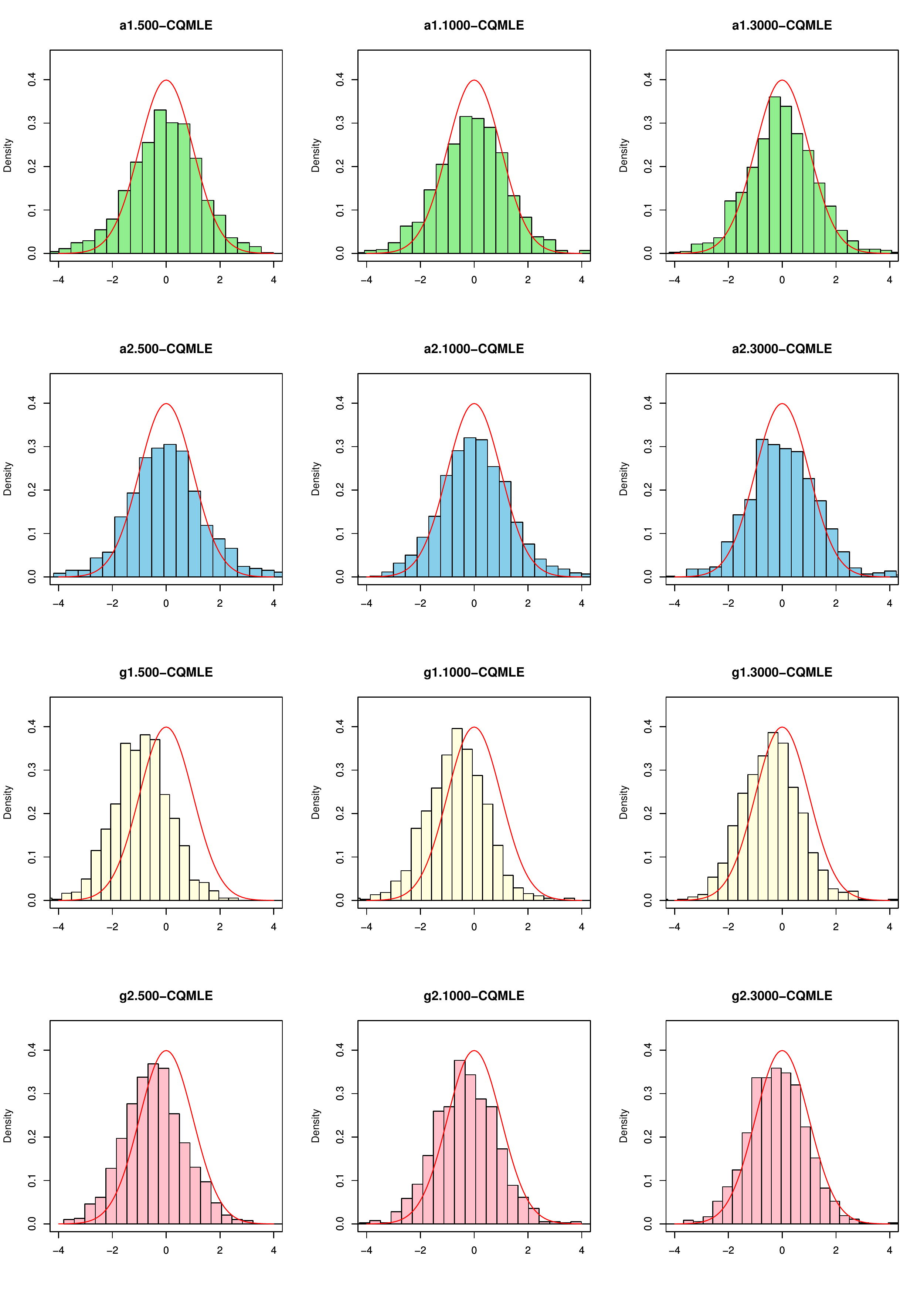}
  \end{center}
 \end{minipage}
 \begin{minipage}{0.49\hsize}
  \begin{center}
   \includegraphics[scale=0.3]{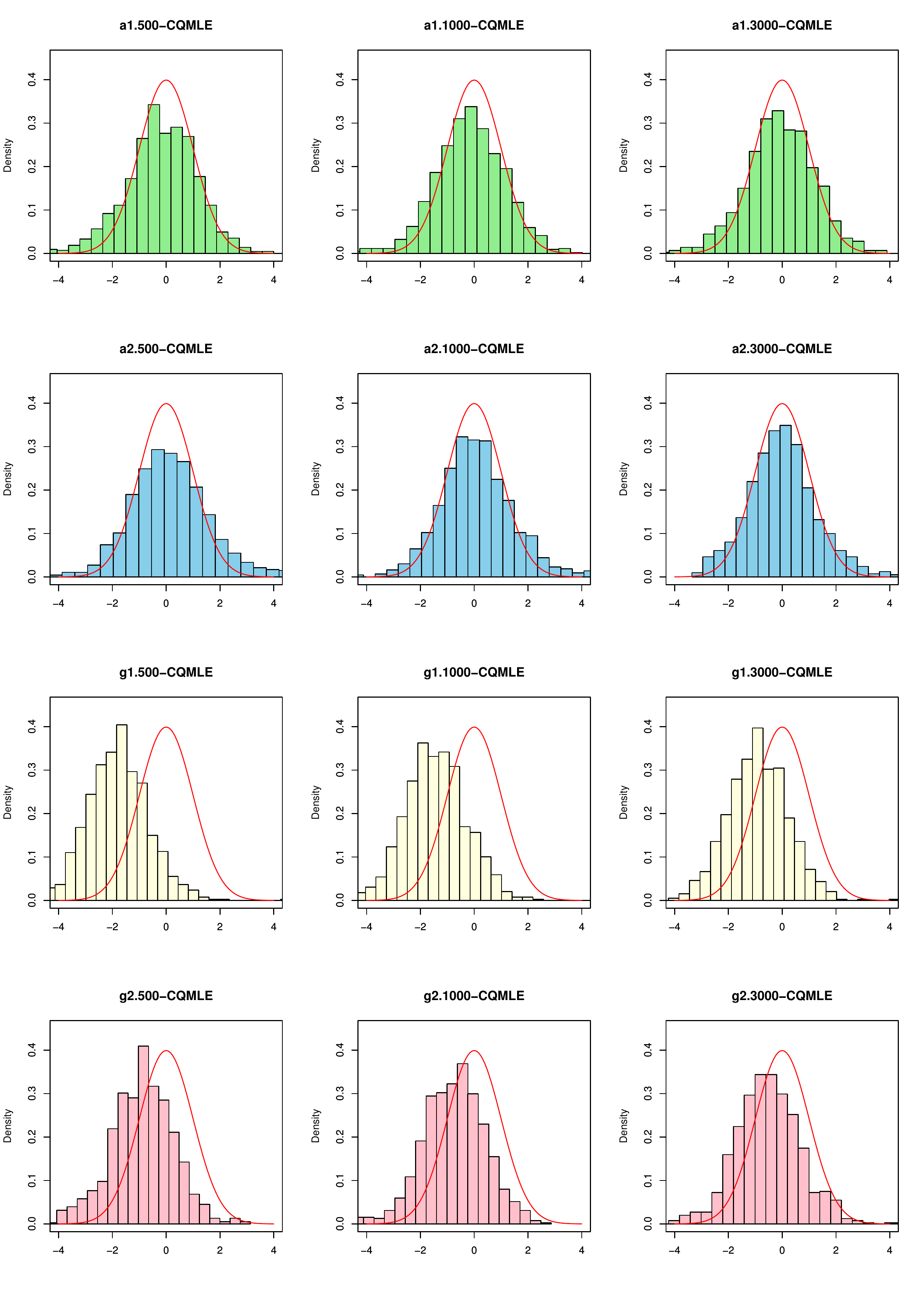}
  \end{center}
 \end{minipage}
\caption{NIG-$J$ example. 
Histograms of $1000$ independent Studentized estimates of $\al_{1}$ (green), $\al_{2}$ (blue), $\gam_{1}$ (cream) and $\gam_{2}$ (red) for $n=500$, $1000$, $3000$; $(T,\eta)=(5,5)$ (left $4 \times 3$ submatrix) and $(T,\eta)=(5,10)$ (right $4 \times 3$ submatrix).}
\label{fig:hist2*2-2}
\end{figure}


\subsection{Genuine $\beta$-stable driver}\label{sec_sim.stable.J}

Next we set $\mcl(J_{1})=S_{\beta}$ with $\beta=1.5$.
Given a realization $(x_{t_{j}})_{j=0}^{n}$ of $(X_{t_{j}})_{j=0}^{n}$ we have to repeatedly evaluate
\begin{equation}
(\al,\gam) \mapsto \sumj \bigg\{-\log[h^{1/\beta}c(x_{t_{j-1}},\gam)]+\log\phi_{\beta}\bigg(\frac{x_{t_{j}}-x_{t_{j-1}}-a(x_{t_{j-1}},\al)h}{h^{1/\beta}c(x_{t_{j-1}},\gam)}\bigg)
\bigg\}.
\nonumber
\end{equation}
The stable density $\phi_{\beta}$ is no longer explicit while we can resort to numerically integration.
Here we used the function \texttt{dstable} in the R package \texttt{stabledist}.
As in the previous example, we give simulation results for $(p_{\al},p_{\gam})=(1,1)$ and $(2,2)$, with using uniformly distributed initial values for \texttt{optim} search. In order to observe effect of the terminal-time value $T$ we conduct the cases of $T=5$ and $T=10$, for $n=100$, $200$, and $500$. For Studentization, we used the values $C_{\al}(1.5)=0.4281$ and $C_{\gam}(1.5)=0.9556$ borrowed from \cite[Table 6]{MatTak06}. 

\begin{itemize}
\item Figures \ref{fig:s_bp1*1_2panels} and \ref{fig:s_hist1*1} show the boxplots and the histograms when $p_{\al}=p_{\gam}=1$ for $T=5$ and $10$. As is expected, we observe much better estimation accuracy compared with the previous NIG-driven case. The figures reveal that the estimation accuracy of $\al$ are overall better for larger $T$, while at the same time a larger $h$ may lead to a more biased $\aes$. Different from the NIG driven case there is no severe bias in estimating $\gam$. Somewhat surprisingly, the accuracy of Studentization especially for the scale parameters may be good enough even for much smaller $n$ compared with the NIG driven case: the standard normality is well achieved even for $n=100$.
\item Figures \ref{fig:s_bp2*2_2panels} and \ref{fig:s_hist2*2} show the results for $p_{\al}=p_{\gam}=2$ with $T=5$ or $10$. The observed tendencies, including those compared with the NIG driven cases, are almost analogous to the case where $p_{\al}=p_{\gam}=1$.
\end{itemize}

In sum, our stable quasi-likelihood works quite well especially when $J$ is standard $1.5$-stable, although so small $T$ should be avoided for good estimation accuracy of $\al$.

\medskip


\begin{figure}[htbp]
 \begin{minipage}{0.49\hsize}
  \begin{center}
   \includegraphics[scale=0.32]{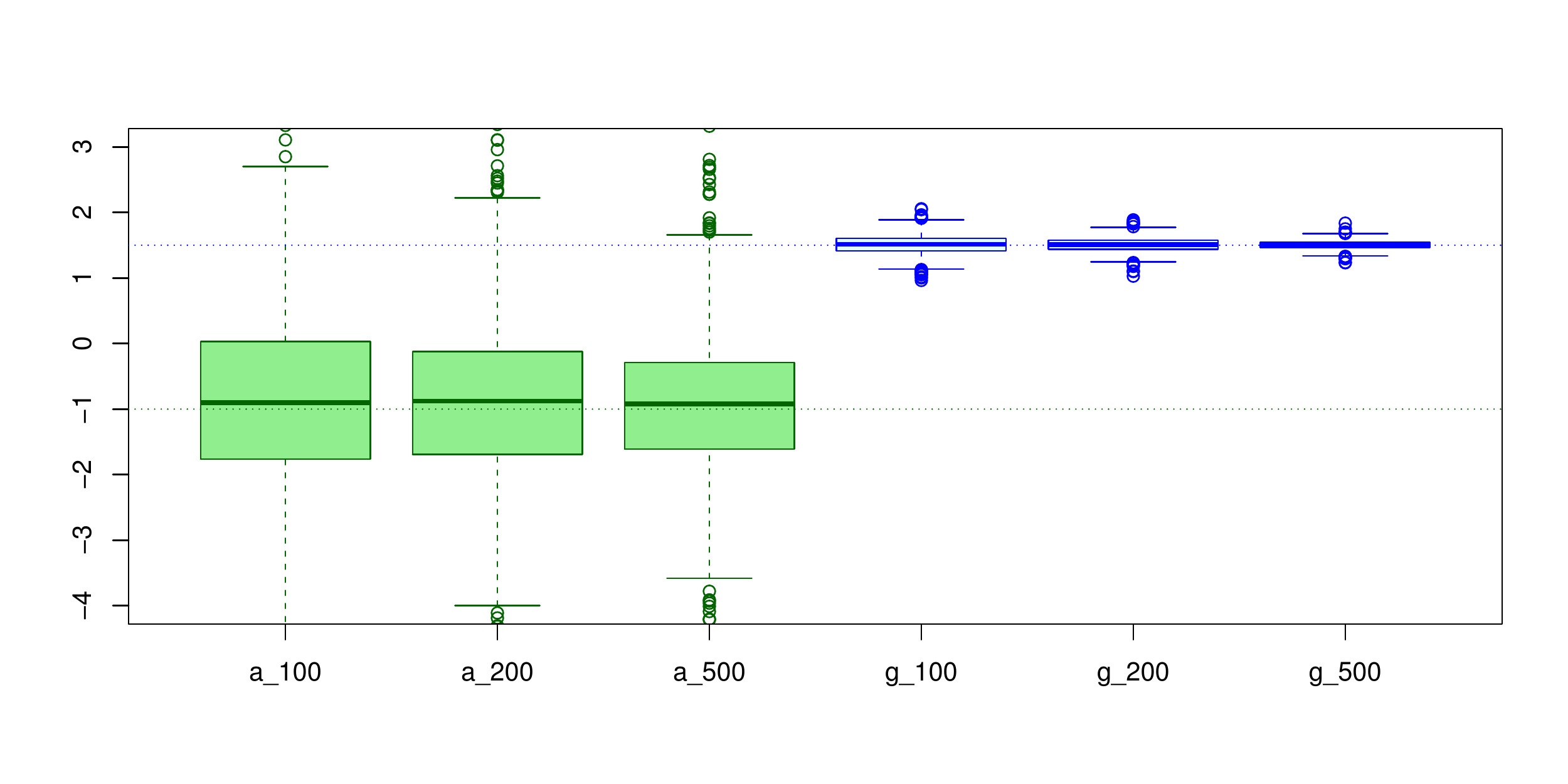}
  \end{center}
 \end{minipage}
 \begin{minipage}{0.49\hsize}
  \begin{center}
   \includegraphics[scale=0.32]{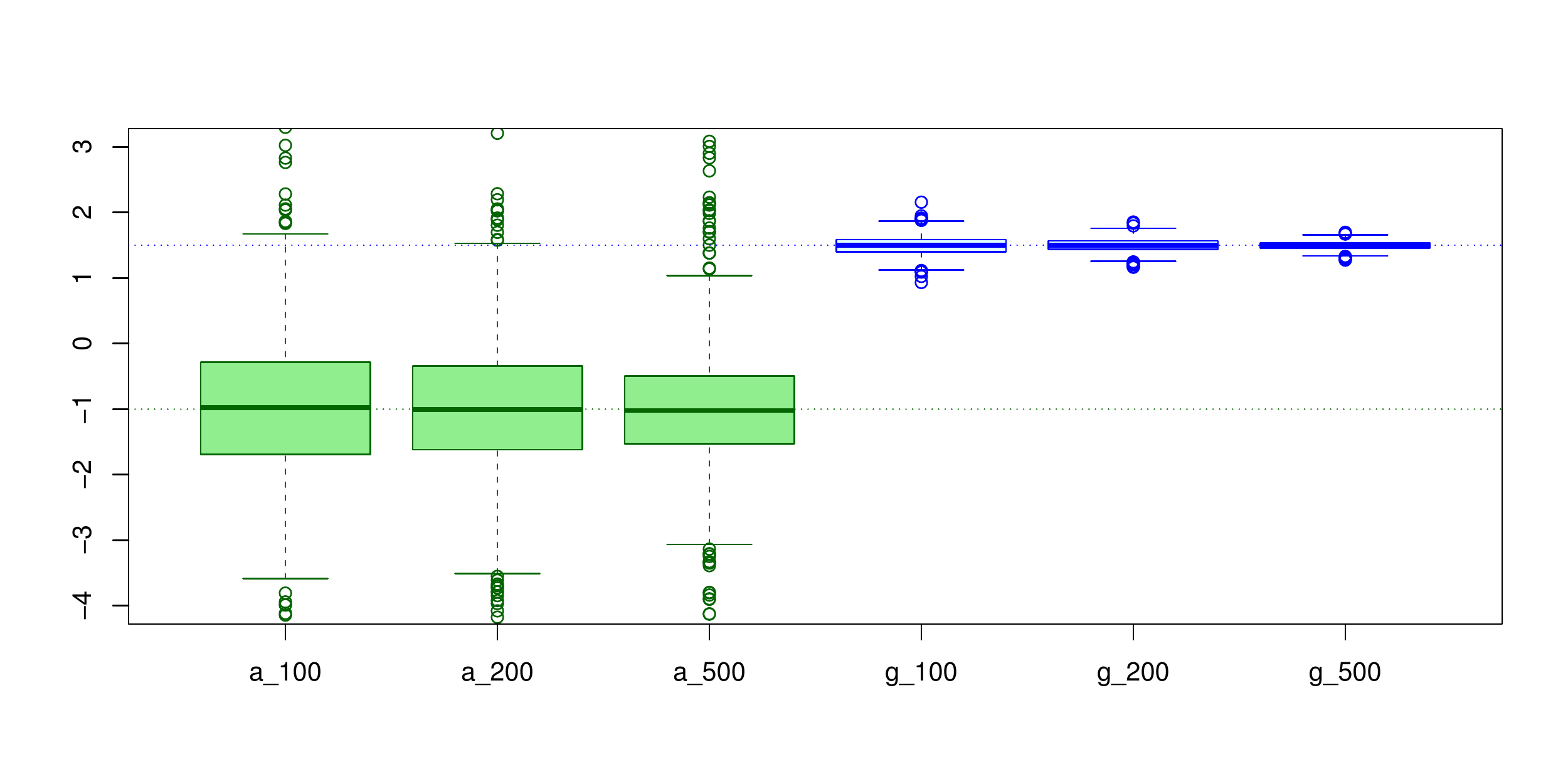}
  \end{center}
 \end{minipage}
 \caption{$S_{1.5}$-$J$ example. 
 Boxplots of $1000$ independent estimates $\aes$ (green) and $\ges$ (blue) for $n=100$, $200$, $500$; $T=5$ (left) and  $T=10$ (right).}
 \label{fig:s_bp1*1_2panels}
\end{figure}

\medskip

\begin{figure}[htbp]
 \begin{minipage}{0.49\hsize}
  \begin{center}
   \includegraphics[scale=0.3]{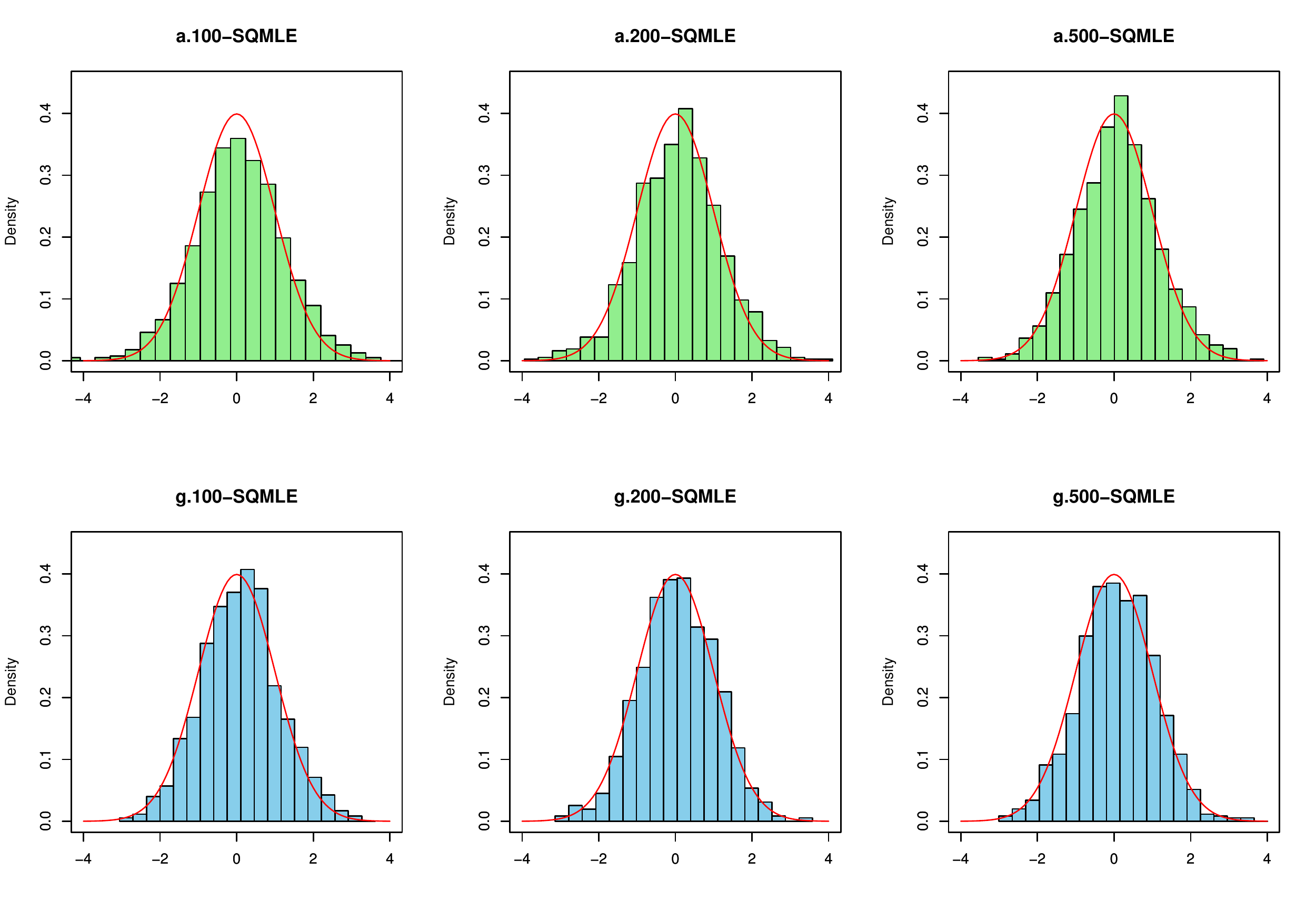}
  \end{center}
 \end{minipage}
 \begin{minipage}{0.49\hsize}
  \begin{center}
   \includegraphics[scale=0.3]{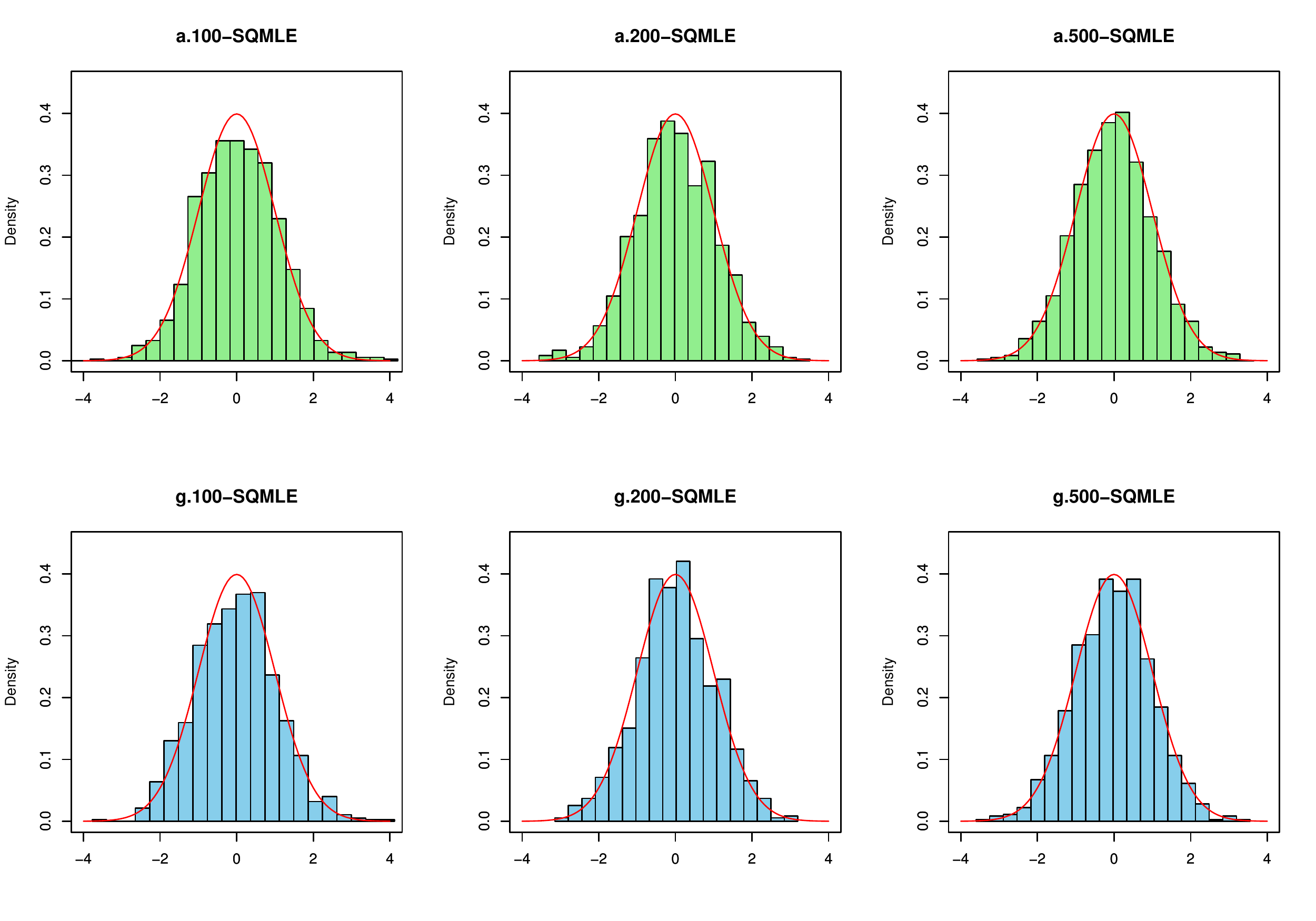}
  \end{center}
 \end{minipage}
\caption{$S_{1.5}$-$J$ example. 
Histograms of $1000$ independent Studentized estimates of $\al$ (green) and $\gam$ (blue) for $n=100$, $200$, $500$; $T=5$ (left $2 \times 3$ submatrix) and $T=10$ (right $2 \times 3$ submatrix).}
\label{fig:s_hist1*1}
\end{figure}

\medskip


\begin{figure}[htbp]
\begin{center}
  \includegraphics[scale=0.45]{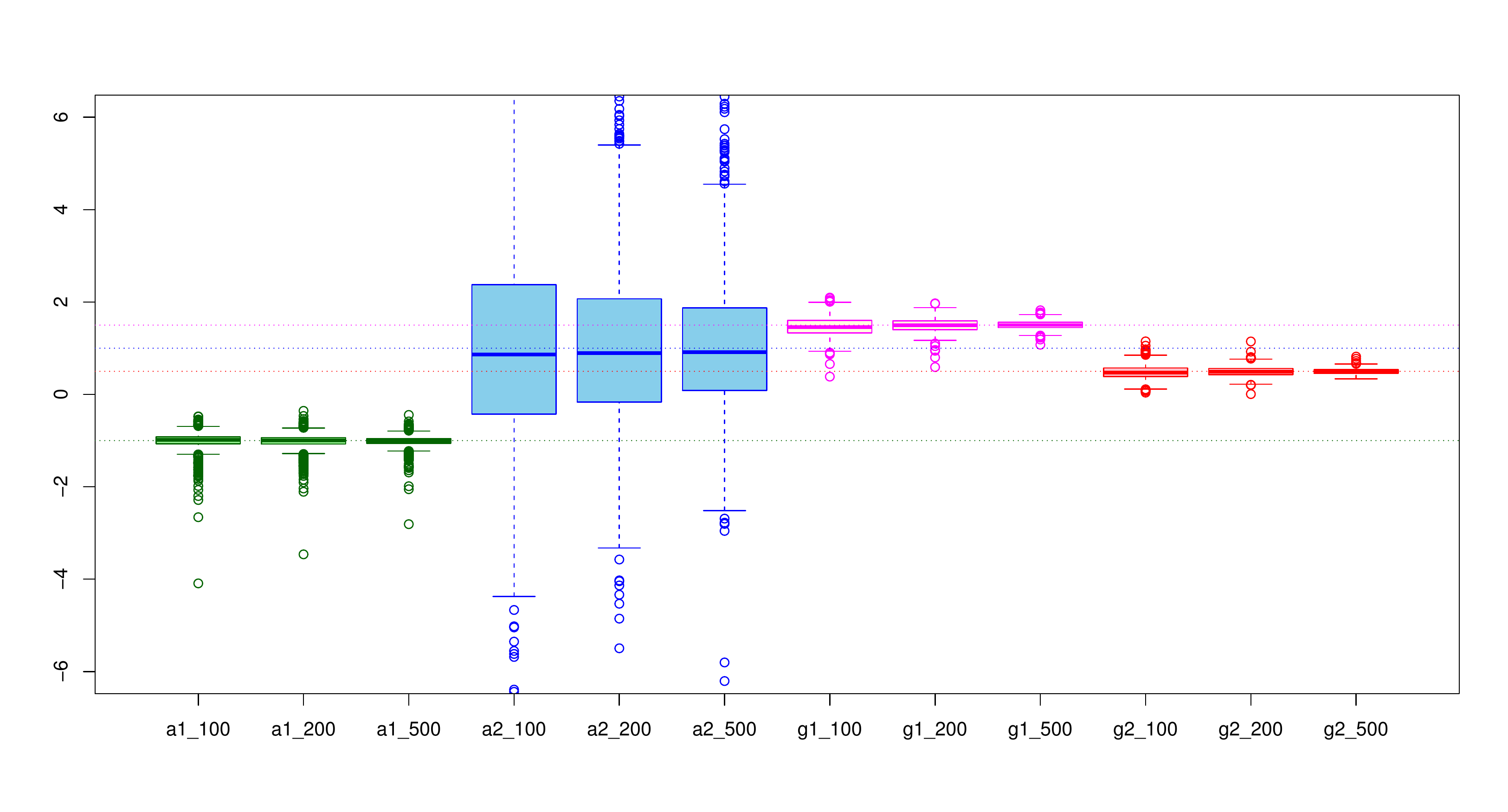}
\end{center}
\begin{center}
  \includegraphics[scale=0.45]{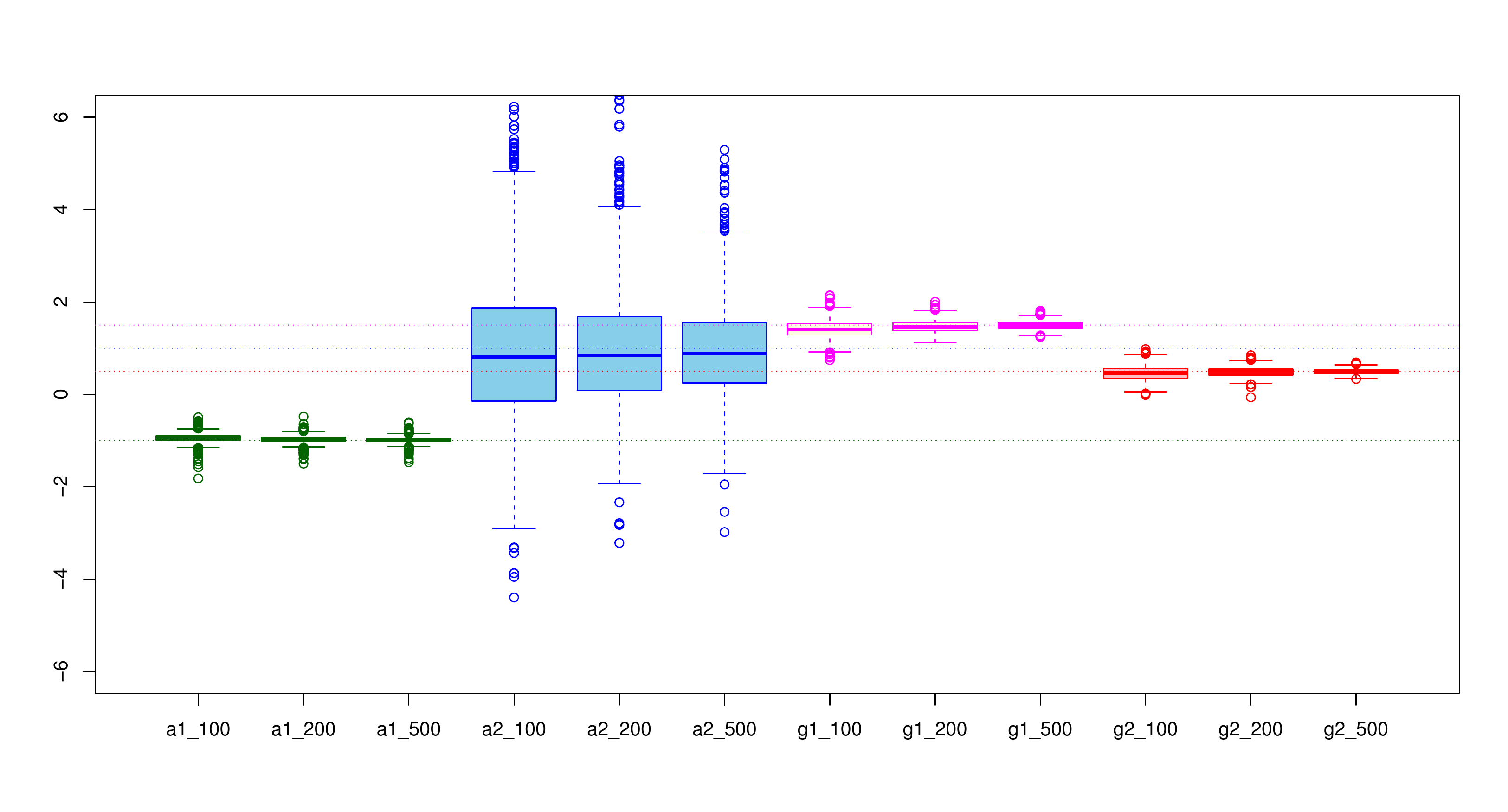}
\end{center}
 \caption{$S_{1.5}$-$J$ example. 
 Boxplots of $1000$ independent estimates $\hat{\al}_{1,n}$ (green), $\hat{\al}_{2,n}$ (blue), $\hat{\gam}_{1,n}$ (pink) and $\hat{\gam}_{2,n}$ (red) for $n=100$, $200$, $500$; $T=5$ (top) and $10$ (bottom).}
 \label{fig:s_bp2*2_2panels}
\end{figure}

\medskip


\begin{figure}[htbp]
 \begin{minipage}{0.49\hsize}
  \begin{center}
   \includegraphics[scale=0.3]{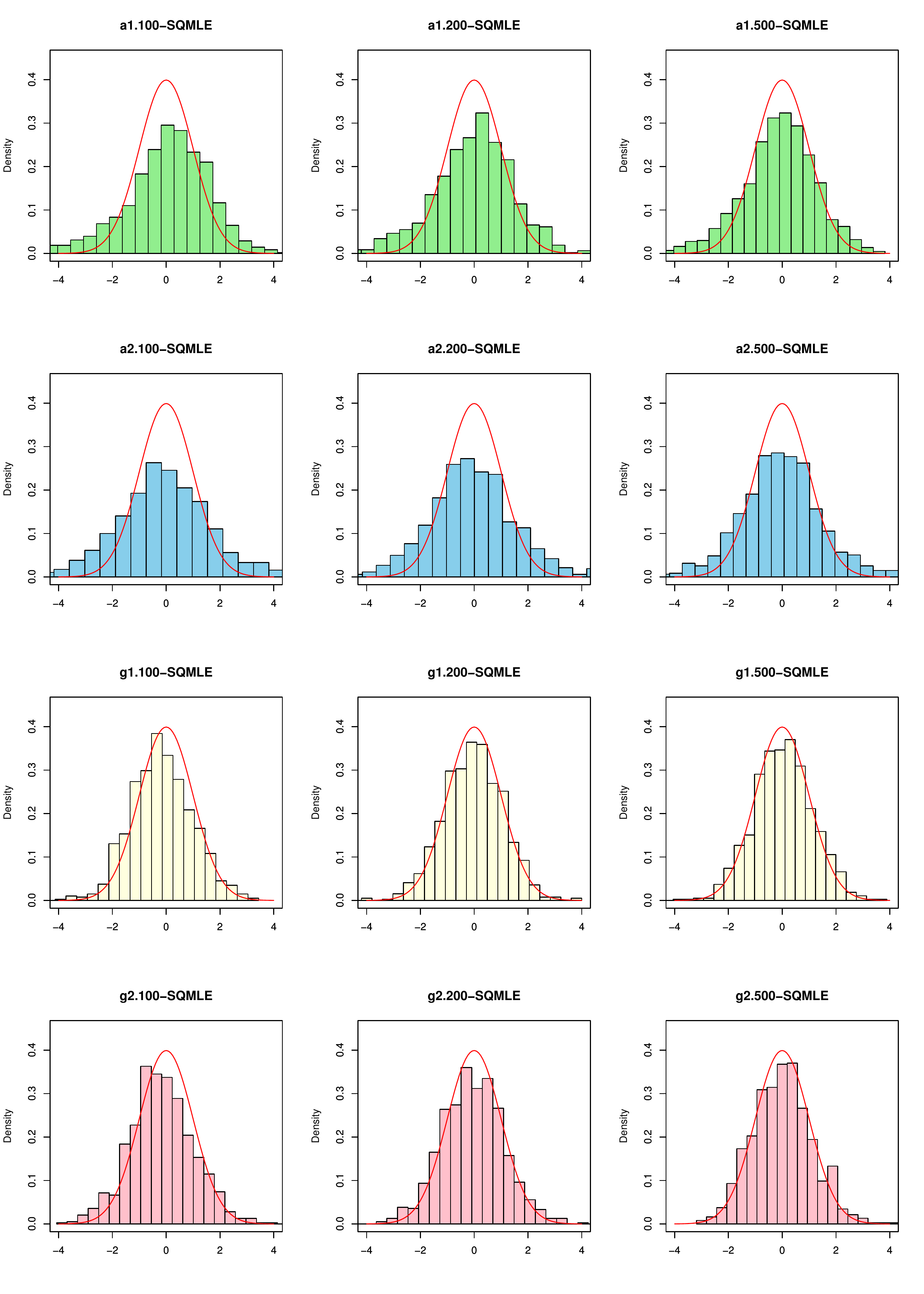}
  \end{center}
 \end{minipage}
 \begin{minipage}{0.49\hsize}
  \begin{center}
   \includegraphics[scale=0.3]{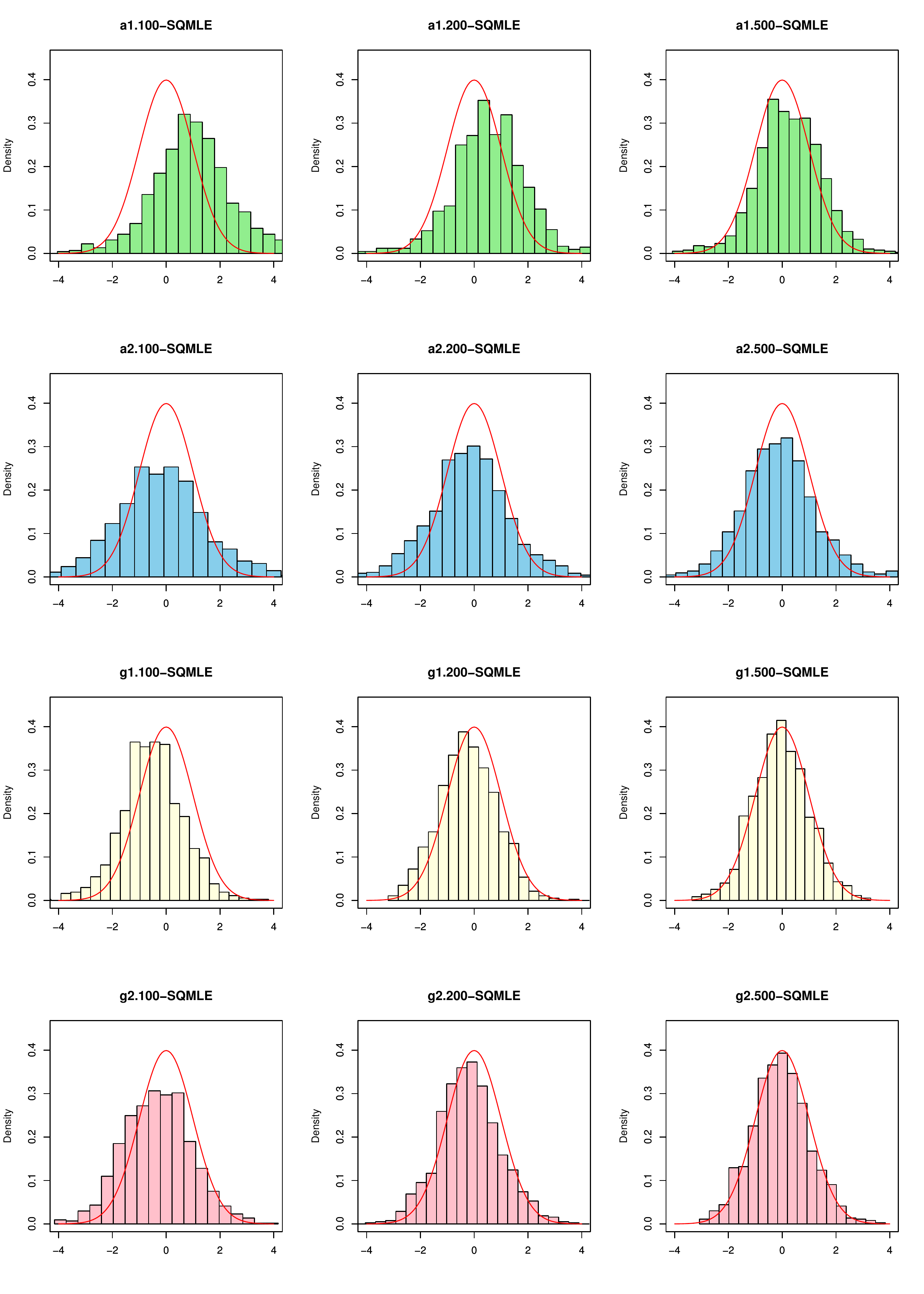}
  \end{center}
 \end{minipage}
\caption{$S_{1.5}$-$J$ example. 
Histograms of $1000$ independent Studentized estimates of $\al_{1}$ (green), $\al_{2}$ (blue), $\gam_{1}$ (cream) and $\gam_{2}$ (red) for $n=100$, $200$, $500$; $T=5$ (left $4 \times 3$ submatrix) and $T=10$ (right $4 \times 3$ submatrix).}
\label{fig:s_hist2*2}
\end{figure}


\section{Proofs of Lemmas \ref{key.lemma} and \ref{key.lemma.cf}}\label{sec_lem.proofs}

This section presents the proofs of the $L^{1}$-local limit theorems given in Section \ref{sec_LSLP}.

\subsection{Proof of Lemma \ref{key.lemma}}

We begin with the following lemma, which in particular completes the proof of the first half of Lemma \ref{key.lemma}(1).
\begin{lem}
Let Assumption \ref{A_J}(1) hold with the function $\rho$ being bounded.
Then, for every $C\ge 0$ and $s<1$,
\begin{equation}
\int_{(0,\infty)} (u^{-s} \vee u^{C})|\vp_{h}(u)-\vp_{0}(u)|du \lesssim h^{a_{\nu}},
\nonumber
\end{equation}
where the constant $a_{\nu}\in(0,1]$ is defined by
\begin{align}
a_{\nu} = \left\{
\begin{array}{cl}
1 & (c_{\rho}=0) \\
(\del/\beta)\wedge 1 & (c_{\rho}>0).
\end{array}\right.
\nonumber
\end{align}
In particular, the distribution $\mcl(h^{-1/\beta}J_{h})$ for $h\in(0,1]$ admits a positive smooth Lebesgue density, which we denote by $f_{h}$, such that
\begin{equation}
\sup_{y}\left| f_{h}(y) - \phi_{\beta}(y) \right| \lesssim h^{a_{\nu}}.
\nonumber
\end{equation}
\label{key.lemma+1}
\end{lem}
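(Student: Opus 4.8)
The plan is to establish the $L^1$ estimate on $|\vp_h - \vp_0|$ by working directly with the exponents, and then to pass from the characteristic-function bound to the density bound by Fourier inversion. First I would write $\vp_h(u) = \exp(\psi_h(u))$ and $\vp_0(u) = \exp(-|u|^\beta)$, where
\[
\psi_h(u) = h\int\big(\cos(h^{-1/\beta}uz) - 1\big)\nu(dz) = \int\big(\cos(uw) - 1\big)h^{1/\beta}\nu_{h}(dw),
\]
with $\nu_h$ the image of $\nu$ under $z\mapsto h^{-1/\beta}z$; equivalently, plug in the density $g(z) = g_{0,\beta}(z)(1+\rho(z))$ and rescale. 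Because $g_{0,\beta}$ is exactly $\beta$-stable, the part coming from $g_{0,\beta}$ reproduces $-|u|^\beta$ up to a harmless truncation remainder (the stable density has unbounded support, so there is a piece of order $h^{\cdot}$ depending on the truncation level), and the remainder $\int(\cos(uw)-1)h^{1/\beta}g_{0,\beta}(h^{1/\beta}w)\rho(h^{1/\beta}w)\,dw$ is controlled using $|\rho(z)| \le c_\rho |z|^\delta$ near $0$ together with boundedness of $\rho$ globally. A short computation splitting $|w|\le h^{-1/\beta}\ep_\rho$ from the tail shows this remainder is $O(h^{(\delta/\beta)\wedge 1}\,(|u|^\beta \wedge |u|^{\beta-\delta}\vee \ldots))$ uniformly in the relevant range of $u$; when $c_\rho = 0$ only the truncation piece survives and one gets $O(h)$. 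The upshot is a pointwise bound of the shape $|\psi_h(u) + |u|^\beta| \lesssim h^{a_\nu}\,\omega(u)$ where $\omega$ grows polynomially but is dominated by $e^{-|u|^\beta}$ after multiplication.

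Next I would convert this to $|\vp_h(u) - \vp_0(u)|$ via $|e^{\psi_h} - e^{-|u|^\beta}| \le |e^{-|u|^\beta}|\,|\psi_h(u)+|u|^\beta|\,e^{|\psi_h(u)+|u|^\beta|}$, using that $\mathrm{Re}\,\psi_h(u) \le 0$ so there is no exponential blow-up; one needs a crude a priori bound $|\psi_h(u) + |u|^\beta| \le C$ (or $\le C|u|^\beta$) on compacts and a matching tail estimate showing $\mathrm{Re}\,\psi_h(u) \le -c|u|^\beta$ for $|u|$ large, which follows from the local $\beta$-stable shape of $g$ near the origin (this is the standard small-jump lower bound on the real part of the Lévy exponent). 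Multiplying by the weight $u^{-s}\vee u^C$ and integrating over $(0,\infty)$: near $u = 0$ the factor $|\psi_h(u)+|u|^\beta|$ vanishes fast enough (it is $O(|u|^{\beta})$ or better) to beat $u^{-s}$ since $s < 1 < \beta+$something, so the integral converges; for large $u$ the Gaussian-like decay $e^{-c|u|^\beta}$ kills the polynomial weight. This yields $\int_{(0,\infty)}(u^{-s}\vee u^C)|\vp_h - \vp_0|\,du \lesssim h^{a_\nu}$.

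Finally, for the density statement, since $\vp_h, \vp_0 \in L^1$ (the lower bound $\mathrm{Re}\,\psi_h \le -c|u|^\beta$ gives integrability of $\vp_h$ for all $h\in(0,1]$, hence $f_h$ exists, is positive and smooth by repeated differentiation under the integral), Fourier inversion gives
\[
\sup_y |f_h(y) - \phi_\beta(y)| \le \frac{1}{2\pi}\int_{\mbbr}|\vp_h(u) - \vp_0(u)|\,du \le \frac{1}{\pi}\int_{(0,\infty)}|\vp_h(u) - \vp_0(u)|\,du \lesssim h^{a_\nu},
\]
by symmetry and the case $s = 0$, $C = 0$ of the first part. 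The main obstacle I anticipate is the bookkeeping in the pointwise exponent estimate: one must carefully track the truncation remainder (present whenever the stable density's unbounded tail meets the rescaling) and the $\rho$-contribution across the regimes $|u|$ small, moderate, and large, making sure the polynomial weight $u^{-s}\vee u^C$ is always absorbed — near zero by the vanishing of $\psi_h + |u|^\beta$, and at infinity by the uniform-in-$h$ exponential decay of $\mathrm{Re}\,\psi_h$. Establishing that uniform decay $\mathrm{Re}\,\psi_h(u) \le -c|u|^\beta$ for all $h\in(0,1]$ and $|u|$ large is the technically delicate lower-bound step, but it is a routine consequence of $g(z)\sim c_\beta|z|^{-1-\beta}$ as $z\to0$ together with the rescaling.
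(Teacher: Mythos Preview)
Your approach is essentially the same as the paper's: write $\vp_h(u) = \vp_0(u)e^{\chi_h(u)}$ with $\chi_h(u) = \int(\cos(uw)-1)g_{0,\beta}(w)\rho(h^{1/\beta}w)\,dw$, obtain two bounds on $\chi_h$---one of the form $|\chi_h(u)| \le \tfrac12|u|^\beta + Ch$ (giving the uniform decay $|\vp_h(u)| \lesssim e^{-|u|^\beta/2}$) and another of the form $|\chi_h(u)| \lesssim h^{a_\nu}(1+u^2)$ (giving the rate)---combine via the mean-value inequality, integrate against the weight, and Fourier-invert. Two minor corrections: there is no ``truncation remainder'' in the $g_{0,\beta}$ part, since by exact homogeneity $h^{1+1/\beta}g_{0,\beta}(h^{1/\beta}w) = g_{0,\beta}(w)$ and this piece reproduces $-|u|^\beta$ exactly; and near $u=0$ the integrability against $u^{-s}$ is automatic from $s<1$ (a uniform bound $|\chi_h(u)|\lesssim h^{a_\nu}$ suffices), so no vanishing of $\chi_h$ at the origin is needed.
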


\begin{proof}
By the expression \eqref{J.lk} we have
\begin{align}
\vp_{h}(u) &= \exp\bigg( \int(\cos(uz)-1)h^{1+1/\beta}g(h^{1/\beta}z)dz \bigg) \nn\\
&= \vp_{0}(u)\exp\bigg(\int (\cos(uz)-1)\rho(h^{1/\beta}z)g_{0,\beta}(z)dz\bigg)=:\vp_{0}(u)\exp\left\{\chi_{h}(u)\right\}.
\nonumber
\end{align}
Pick a small $\ep'_{\rho}>0$ such that $\sup_{|y|\le \ep'_{\rho}}|\rho(y)|\le 1/2$. We will make use of the following two different bounds for the function $\chi_{h}$: on the one hand, we have
\begin{align}
|\chi_{h}(u)| &\le \int (1-\cos(uz))|\rho(h^{1/\beta}z)|g_{0,\beta}(z)dz \nn\\
&=\int_{|z|\le\ep'_{\rho}h^{-1/\beta}}(1-\cos(uz))g_{0,\beta}(z)|\rho(h^{1/\beta}z)|dz+\int_{|z|>\ep'_{\rho}h^{-1/\beta}}(1-\cos(uz))g_{0,\beta}(z)|\rho(h^{1/\beta}z)|dz \nn\\
&\le \frac{1}{2}\int_{|z|\le\ep'_{\rho}h^{-1/\beta}}(1-\cos(uz))g_{0,\beta}(z)dz + 4\|\rho\|_{\infty}\int_{\ep'_{\rho}h^{-1/\beta}}^{\infty}g_{0,\beta}(z)dz \nn\\
&\le -\frac{1}{2}\int (\cos(uz)-1)g_{0,\beta}(z)dz + Ch = \frac{1}{2}|u|^{\beta}+Ch;
\nn
\end{align}
on the other hand,
\begin{align}
& \int_{|z|\le\ep'_{\rho}h^{-1/\beta}}(1-\cos(uz))g_{0,\beta}(z)|\rho(h^{1/\beta}z)|dz \nn\\
& \le c_{\rho}h^{\del/\beta}\bigg( 2\int_{|z|\le\ep'_{\rho}h^{-1/\beta},\,|z|>1}g_{0,\beta}(z)|z|^{\del}dz 
+ \int_{|z|\le\ep'_{\rho}h^{-1/\beta},\,|z|\le 1} (uz)^{2} g_{0,\beta}(z)|z|^{\del}dz \bigg) \nn\\
&\lesssim c_{\rho}h^{\del/\beta}\bigg( \int_{1}^{\ep'_{\rho}h^{-1/\beta}}z^{-1-\beta+\del}dz + u^{2} \int_{0}^{1}z^{1-\beta+\del}dz\bigg) \nn\\
&\lesssim c_{\rho}\big( h^{(\del/\beta)\wedge 1} + u^{2} h^{\del/\beta}\big),
\nn
\end{align}
where we used the fact $\sup_{y}|\frac{\sin y}{y}|<\infty$ in the second step, so that
\begin{equation}
|\chi_{h}(u)| \lesssim c_{\rho}\big( h^{(\del/\beta)\wedge 1} + u^{2} h^{\del/\beta}\big) + h.
\nn
\end{equation}
It follows from these estimates for $\chi_{h}$ with the mean-value theorem that for every $s<1$ and $C\ge 0$ we have
\begin{align}
\int_{(0,\infty)} (u^{-s} \vee u^{C})|\vp_{h}(u)-\vp_{0}(u)|du
&\lesssim \int_{(0,\infty)} (u^{-s} \vee u^{C}) \vp_{0}(u)\left| \exp\{\chi_{h}(u)\} -1\right| du \nn\\
&\le \int_{(0,\infty)} (u^{-s} \vee u^{C})\vp_{0}(u)\left(\sup_{0\le s\le 1}\exp(s\chi_{h}(u))\right)|\chi_{h}(u)| du \nn\\
&\lesssim \int_{(0,\infty)} (u^{-s} \vee u^{C})e^{-|u|^{^\beta}/2}\left(c_{\rho}\big( h^{(\del/\beta)\wedge 1} + u^{2} h^{\del/\beta}\big) + h\right) du \nn\\
&\lesssim c_{\rho}h^{(\del/\beta)\wedge 1} + h \lesssim h^{a_{\nu}}.
\nn
\end{align}
This prove the first half of the lemma.
Since $\sup_{h\in(0,1]}\vp_{h}(u)\lesssim \exp(-C|u|^{\beta})$ from the above argument, the existence of the positive smooth density $f_{h}$ follows from the same argument as in the proof of \cite[Lemma 4.4(a)]{Mas10ejs}.
The latter half is a direct consequence of the Fourier inversion:
\begin{equation}
\sup_{y}\left| f_{h}(y) - \phi_{\beta}(y) \right|
= \sup_{y}\bigg|\frac{1}{2\pi}\int e^{-iuy}\left(\vp_{h}(u)-\vp_{0}(u)\right)du\bigg| \lesssim \int|\vp_{h}(u)-\vp_{0}(u)|du \lesssim h^{a_{\nu}}.
\nonumber
\end{equation}
This completes the proof.
\end{proof}

\medskip

We now prove Lemma \ref{key.lemma}(1).
Because of the boundedness of $\rho$, the {\ld} of $\mcl(h^{-1/\beta}J_{h})$ is bounded by a constant multiple of $g_{0,\beta}(z)$.
Invoking \cite[Theorem 25.3]{Sat99}, we see that the tail of $f_{h}$ is bounded by that of $\phi_{\beta}$ uniformly in $h\in(0,1]$:
for each $\kappa<\beta$,
\begin{equation}
\sup_{h\in(0,1]}\sup_{M>0}M^{\beta-\kappa}\int_{|y|>M}|y|^{\kappa}f_{h}(y)dy<\infty.
\label{key.lemma-p3}
\end{equation}
Then, for any positive sequence $b_{n}\uparrow\infty$ the quantity $\int |y|^{\kappa}\left|f_{h}(y)-\phi_{\beta}(y)\right|dy$ is bounded by the sum of the two terms
\begin{equation}
\int_{|y|\ge b_{n}}|y|^{\kappa}f_{h}(y)dy + \int_{|y|\ge b_{n}}|y|^{\kappa}\phi_{\beta}(y)dy \lesssim b_{n}^{\kappa-\beta} \to 0
\nonumber
\end{equation}
and
\begin{equation}
\bigg(\sup_{y}\left| f_{h}(y) - \phi_{\beta}(y) \right|\bigg)
\bigg( \int_{|y|\le b_{n}}|y|^{\kappa}dy \bigg) \lesssim b_{n}^{1+\kappa}h^{a_{\nu}},
\nonumber
\end{equation}
where we used Lemma \ref{key.lemma+1} for the latter.
The convergence $b_{n}^{1+\kappa}h^{a_{\nu}} \to 0$ follows on taking any $b_{n}=o(h^{-a_{\nu}/(1+\kappa)})$.

\medskip

Turning to the proof of Lemma \ref{key.lemma}(2), again we pick a positive real sequence $b_{n}\to\infty$.
Then
\begin{equation}
\int|f_{h}(y)-\phi_{\beta}(y)|dy
\lesssim \int_{(b_{n},\infty)}|f_{h}(y)-\phi_{\beta}(y)|dy + \int_{(0,b_{n}]}|f_{h}(y)-\phi_{\beta}(y)|dy =: \del'_{n} + \del''_{n}.
\label{key.lemma-pr.2-0.5}
\end{equation}
By \eqref{key.lemma-p3} with $\kappa=0$ we have
\begin{equation}
\del'_{n}\lesssim b_{n}^{-\beta}.
\label{key.lemma-pr.2-1}
\end{equation}
Recalling that $\psi_{h}(u) := \log\vp_{h}(u)$ and that we are assuming that $g\equiv 0$ on $\{|z|>K\}$, we have $\p_{u}\vp_{h}(u) = \vp_{h}(u)\p_{u}\psi_{h}(u)$ for $u>0$. Using Fourier inversion, integration by parts, and the fact $\sup_{y\in\mbbr}\frac{|\sin y|}{|y|^{r}}<\infty$ for any $r\in[0,1]$, we can bound $\del''_{n}$ as follows:
\begin{align}
\del''_{n}
&\lesssim \int_{(0,b_{n}]} \bigg| \int e^{-iuy} \left(\vp_{h}(u)-\vp_{0}(u)\right) du\bigg| dy \nn\\
&\lesssim \int_{(0,b_{n}]} \bigg| \int_{(0,\infty)} \cos(uy) \left(\vp_{h}(u)-\vp_{0}(u)\right) du\bigg| dy \nn\\
&\lesssim \int_{(0,b_{n}]} \frac{1}{y}\bigg| \int_{(0,\infty)} \sin(uy)\left(\p_{u}\vp_{h}(u)-\p_{u}\vp_{0}(u)\right) du\bigg|dy \nn\\
&\lesssim \int_{(0,b_{n}]} y^{r-1} \int_{(0,\infty)} u^{r}\left|\p_{u}\vp_{h}(u)-\p_{u}\vp_{0}(u)\right| du dy \nn\\
&\lesssim b_{n}^{r} \int_{(0,\infty)} u^{r} \left| \p_{u}\vp_{h}(u)-\p_{u}\vp_{0}(u) \right| du \nn\\
&\lesssim b_{n}^{r} \int_{(0,\infty)} u^{r} |\vp_{h}(u)-\vp_{0}(u)| |\p_{u}\psi_{h}(u)| du + b_{n}^{r} \int_{(0,\infty)} u^{r}\vp_{0}(u) |\p_{u}\psi_{h}(u) + \beta u^{\beta-1}|du.
\label{key.lemma-pr.2-2}
\end{align}
Suppose for a moment that
\begin{equation}
|\p_{u}\psi_{h}(u) + \beta u^{\beta-1}| \lesssim \frac{h}{u}, \qquad u>0.
\label{key.lemma-pr.2-3}
\end{equation}
Then $|\p_{u}\psi_{h}(u)| \lesssim (1+u^{\beta})/u$ and it follows from \eqref{key.lemma-pr.2-2} and the statement (1)(a) that
\begin{align}
\del''_{n} &\lesssim b_{n}^{r} \int_{(0,\infty)} u^{r-1}(1+u^{\beta}) |\vp_{h}(u)-\vp_{0}(u)| du + b_{n}^{r}h \int_{(0,\infty)} u^{r-1}\vp_{0}(u)du \nn\\
&\lesssim b_{n}^{r}h^{a_{\nu}} + b_{n}^{r}h \lesssim b_{n}^{r}h^{1\wedge a_{\nu}} = b_{n}^{r}h
\label{key.lemma-pr.2-4}
\end{align}
if $r\in(0,1]$; under the assumptions of the present Lemma 2.2(2), one can always take $\del>\beta$, hence $a_{\nu}=1$.
By \eqref{key.lemma-pr.2-1} and \eqref{key.lemma-pr.2-4} we obtain
\begin{equation}
\del_{n} \lesssim b_{n}^{-\beta} + b_{n}^{r}h.
\nonumber
\end{equation}
Optimizing the upper bound with respect to $b_{n}$ results in the choice $b_{n}\sim h^{-1/(\beta+r)}$, with which we conclude \eqref{key.lemma-1} since $r\in(0,1]$ was arbitrary.
We note that introducing the parameter $r>0$ is essential in the above estimates.

It remains to prove \eqref{key.lemma-pr.2-3}. 
Since $\rho(z)\equiv -1$ for $|z|>K$, partially differentiating with respect to $u$ under the integral sign we obtain
\begin{align}
\p_{u}\psi_{h}(u) 
& = 2\p_{u}\bigg( \int_{(0,Kh^{-1/\beta}]}(\cos(uy)-1) g_{0,\beta}(y)\{1+\rho(h^{1/\beta}y)\}dy \bigg)
\nn\\
&= -2c_{\beta}\int_{(0,Kh^{-1/\beta}]} \frac{\sin(uy)}{y^{\beta}}\rho(h^{1/\beta}y)dy 
-2c_{\beta}\int_{(0,Kh^{-1/\beta}]} \frac{\sin(uy)}{y^{\beta}}dy \nn\\
&=: R_{h}(u) + A_{h}(u).
\nonumber
\end{align}
It suffices to show that $|R_{h}(u)| \lesssim h/u$ and $|A_{h}(u) + \beta u^{\beta-1}| \lesssim h/u$ for $u>0$.
Write $\xi_{\beta}(y)=y^{-\beta}\rho(y)$. We have $R_{h} \equiv 0$ if $c_{\rho}=0$.
In case where $c_{\rho}>0$, thanks to Assumption \ref{A_J}(2)(b), the change of variables and the integration by parts yield that
\begin{align}
|R_{h}(u)| &\lesssim h^{1-1/\beta}\bigg| \int_{(0,K]}\sin(uh^{-1/\beta}x)\xi_{\beta}(x)dx \bigg| \nn\\
&= h^{1-1/\beta} \bigg|\int_{(0,K]}\p_{x}\bigg(\frac{\cos(uh^{-1/\beta}x)}{uh^{-1/\beta}}\bigg)\xi_{\beta}(x)dx \bigg| \nn\\
&\lesssim \frac{h}{u}\bigg( 1+|\xi_{\beta}(0+)| + \int_{(0,K]}|\p_{x}\xi_{\beta}(x)|dx \bigg) \lesssim \frac{h}{u}.
\nonumber
\end{align}
Turning to $A_{h}(u)$, we need the following specific identity from the Lebesgue integration theory \cite{Iwa15}:
for $r>0$ and $\beta\in(0,2)$, we have
\begin{equation}
\int_{(0,r)} \frac{\sin x}{x^{\beta}}dx - \Gam(1-\beta) \cos\bigg(\frac{\beta\pi}{2}\bigg)
= \frac{1}{\Gam(\beta)}\int_{(0,\infty)} \frac{e^{-ry}y^{\beta-1}(\cos r + y\sin r)}{1+y^{2}}dy.
\label{key.lemma-pr.2-5}
\end{equation}
From the definition \eqref{c.beta_def} and the property of the gamma function, we have the identity $\frac{\beta}{2c_{\beta}}=\Gam(1-\beta)\cos(\frac{\beta\pi}{2})$.
Applying \eqref{key.lemma-pr.2-5} together with the change of variables, we obtain
\begin{align}
|A_{h}(u) + \beta u^{\beta-1} |
&= \bigg| -2c_{\beta}u^{\beta-1}\int_{(0,uKh^{-1/\beta}]} \frac{\sin x}{x^{\beta}}dx + \beta u^{\beta-1} \bigg| \nn\\
&\lesssim u^{\beta-1} \bigg| \int_{(0,uKh^{-1/\beta})} \frac{\sin x}{x^{\beta}}dx - \Gam(1-\beta) \cos\bigg(\frac{\beta\pi}{2}\bigg) \bigg| \nn\\
&\lesssim u^{\beta-1} \bigg( 
\int_{(0,\infty)} \frac{e^{-ry}y^{\beta-1}}{1+y^{2}}dy + \int_{(0,\infty)} \frac{e^{-ry}y^{\beta}}{1+y^{2}}dy\bigg)\bigg|_{r=uKh^{-1/\beta}} \nn\\
&= u^{\beta-1} \bigg( r^{-\beta}\int_{(0,\infty)} \frac{e^{-x}x^{\beta-1}}{1+(x/r)^{2}}dx 
+ r^{-\beta-1}\int_{(0,\infty)} \frac{e^{-x}x^{\beta}}{1+(x/r)^{2}}dx\bigg)\bigg|_{r=uKh^{-1/\beta}} \nn\\
&\le u^{\beta-1} \bigg\{ r^{-\beta} \bigg(\int_{(0,\infty)} e^{-x}x^{\beta-1}dx 
+ \int_{(0,\infty)} \frac{re^{-x}x^{\beta}}{r^{2}+x^{2}}dx\bigg)\bigg\}\bigg|_{r=uKh^{-1/\beta}} \nn\\
&\lesssim \frac{h}{u} \bigg( 1 + \sup_{r>0}\int_{(0,\infty)} \frac{re^{-x}x^{\beta}}{r^{2}+x^{2}}dx\bigg) \lesssim \frac{h}{u}.
\nonumber
\end{align}
Here, in the last step we used that
\begin{align}
\bigg|\int_{(0,\infty)} \frac{re^{-x}x^{\beta}}{r^{2}+x^{2}}dx \bigg|
&= \bigg|\bigg[ \arctan\bigg(\frac{x}{r}\bigg)e^{-x}x^{\beta}\bigg]_{(0,\infty)} - \int_{(0,\infty)} \arctan\bigg(\frac{x}{r}\bigg)(\beta x^{\beta-1}-x^{\beta})e^{-x}dx\bigg| \nn\\
&\lesssim \int_{(0,\infty)} x^{\beta-1}e^{-x}dx + \int_{(0,\infty)} x^{\beta}e^{-x}dx <\infty
\nonumber
\end{align}
uniformly in $r>0$. Thus we have obtained \eqref{key.lemma-pr.2-3}, completing the proof of the claim (2).

\subsection{Proof of Lemma \ref{key.lemma.cf}}

It is enough to notice that combining Lemma \ref{key.lemma}(1), \eqref{key.lemma-pr.2-0.5}, \eqref{key.lemma-pr.2-1} and \eqref{key.lemma-pr.2-2} leads to
\begin{equation}
\int \left|f_{h}(y)-\phi_{\beta}(y)\right|dy \lesssim b_{n}^{-\beta} + (\ep_{\psi}(h) \vee h^{(\del/\beta)\wedge 1})b_{n}^{r},
\nonumber
\end{equation}
and that the upper bound is optimized (with respect to $b_{n}$) to be $(\ep_{\psi}(h) \vee h^{(\del/\beta)\wedge 1})^{\frac{\beta}{\beta+r}}$.

\section{Proofs of the main results}\label{sec_proofs}

This section is devoted to proving Theorems \ref{sqmle.iv_bda_thm} and \ref{sqmle.iv_bda_thm.cf}, Corollary \ref{sqmle.iv_bda_thm.cor}, and Theorem \ref{sqmle_ergo_thm}.

\subsection{Localization: elimination of large jumps}\label{sec_localization}


Prior to the proofs, we need to introduce a localization of the underlying probability space by eliminating possible large jumps of $J$.
Specifically, by means of \cite[Section 4.4.1]{JacPro12}, in order to prove Theorems \ref{sqmle.iv_bda_thm} and \ref{sqmle.iv_bda_thm.cf} and Corollary \ref{sqmle.iv_bda_thm.cor} we may and do suppose that
\begin{equation}
\exists K>0,\quad \pr\left(\forall t\in[0,T],~|\D J_{t}|\le K\right)=1,
\label{localization_1}
\end{equation}
(The arguments in \cite[Section 4.4.1]{JacPro12} partly concerns the stable convergence in law, which we will briefly mention in Section \ref{sec_proof.amn}).
The point here is that, since our main results are concerned with the weak properties over the fixed period $[0,T]$, we may conveniently focus on a subset $\Omega_{K,T}(\in\mcf)\subset\Omega$ on which jumps of $J$ are bounded by a constant $K$: $\sup_{\omega\in\Omega,\, t\le T}|\D J_{t}(\omega)|\le K$, the probability $\pr(\Omega_{K,T})$ being arbitrarily close to $1$ for $K$ large enough;
the simple yet very powerful localization device is standard in the context of limit theory for statistics based on high-frequency data \cite{Jac12}, and has been considered for quite general semimartingale models.
Note that the symmetry assumption of $\nu$ makes the parametric form of the drift coefficient unaffected by elimination of large jumps of $J$.

\medskip

For later use, we mention and recall some important consequences of either Assumption \ref{A_J} with \eqref{localization_1}, or Assumption \ref{A_J.cf}.

\begin{itemize}
\item Following the argument \cite[Section 2.1.5]{JacPro12} together with Gronwall's inequality under the global Lipschitz condition of $(a(\cdot,\al_{0}), c(\cdot,\gam_{0}))$, we see that
\begin{equation}
\E\bigg(\sup_{t\le T}|X_{t}|^{q}\bigg) \le C, \qquad
\sup_{t\in[s,s+h]\cap [0,T]}\E\left(|X_{t}-X_{s}|^{q}|\mcf_{s}\right) \lesssim h (1+|X_{s}|^{C})
\label{loc.X.ineq}
\end{equation}
for any $q\ge 2$ and $s\in[0,T]$; in particular,
\begin{equation}
\sup_{t\in[s,s+h]\cap [0,T]}\E\left(|X_{t}-X_{s}|^{q}\right)=O(h).
\nonumber
\end{equation}

\item There exists a constant $C_{0}>0$ such that $\int_{|z|>y}\nu(dz)\lesssim y^{-\beta}$ for $y\in(0,C_{0}]$, with which \cite[Theorem 2(a) and (c)]{LusPag08} gives
\begin{equation}
\E\left(\sup_{t\le h}|J_{t}|^{\kappa}\right)\lesssim h^{\kappa/\beta}
\label{LusPag.ineq.result}
\end{equation}
for each $\kappa\in(0,\beta)$.

\item The convergences \eqref{llt.imp.1} and \eqref{llt.imp.2} hold when $\rho(z)\equiv -1$ for $|z|>K$ (hence $\rho$ is bounded):
\begin{align}
\sqrt{n}\int|f_{h}(y)-\phi_{\beta}(y)|dy &\to 0, \nn\\
\int|y|^{\kappa}|f_{h}(y)-\phi_{\beta}(y)|dy &\to 0, \qquad \kappa\in[0,\beta).
\nn
\end{align}
\end{itemize}

\subsection{Preliminary asymptotics}\label{sec_pre.asymp}

Let us recall the notation $\ep_{j}(\theta)=\{h^{1/\beta}c_{j-1}(\gam)\}^{-1}(\D_{j}X-ha_{j-1}(\al))$.
Throughout this section, we look at asymptotic behavior of the auxiliary random function
\begin{equation}
U_{n}(\theta) := \sumj \pi_{j-1}(\theta) \eta(\ep_{j}(\theta)),
\nonumber
\end{equation}
where $\pi: \mbbr \times\overline{\Theta} \to \mbbr^{k}\otimes\mbbr^{m}$ and $\eta:\mbbr\to\mbbr^{m}$ are measurable functions. This form of $U_{n}(\theta)$ will appear in common in the proofs of the consistency and asymptotic (mixed) normality of the SQMLE. The results in this section will be repeatedly used in the subsequent sections.

Let $\E^{j-1}(\cdot)$ be a shorthand for $\E(\cdot|\mcf_{t_{j-1}})$ and write $U_{n}(\theta)=U_{1,n}(\theta)+U_{2,n}(\theta)$, where
\begin{align}
U_{1,n}(\theta) &:=\sumj \pi_{j-1}(\theta) \Big(\eta(\ep_{j}(\theta)) - \E^{j-1}\{\eta(\ep_{j}(\theta))\} \Big), \nn\\
U_{2,n}(\theta) &:=\sumj \pi_{j-1}(\theta) \E^{j-1}\{\eta(\ep_{j}(\theta))\}.
\nonumber
\end{align}
Given doubly indexed random functions $F_{nj}(\theta)$ on $\overline{\Theta}$, a positive sequence $(a_{n})$, and a constant $q>0$, we will write
\begin{equation}
\text{$F_{nj}(\theta)=O^{\ast}_{L^{q}}(a_{n})$ \quad if \quad $\ds{\sup_{n}\sup_{j\le n}\E\bigg(\sup_{\theta}|a_{n}^{-1}F_{nj}(\theta)|^{q}\bigg)<\infty}$.}
\nonumber
\end{equation}

\subsubsection{Uniform estimate of the martingale part $U_{1,n}$}

\begin{lem}
Suppose that:
\begin{itemize}
\item[{\rm (i)}] $\pi\in\mcc^{1}(\mbbr\times\Theta)$ and $\sup_{\theta}\left\{|\pi(x,\theta)|+|\p_{\theta}\pi(x,\theta)|\right\}\lesssim 1+|x|^{C}$;
\item[{\rm (ii)}] $\eta\in\mcc^{1}(\mbbr)$ and $|\eta(y)| + |y||\p\eta(y)| \lesssim 1+\log(1+|y|)$.
\end{itemize}
Then, for every $q>0$ we have $U_{1,n}(\theta)=O^{\ast}_{L^{q}}(\sqrt{n})$, hence in particular 
\begin{equation}
\sup_{\theta}\bigg|\frac{1}{nh^{1-1/\beta}}U_{1,n}(\theta)\bigg| = O_{p}\left((\sqrt{n}h^{1-1/\beta})^{-1}\right)=o_{p}(1).
\label{lem_aux.se1+1}
\end{equation}
\label{lem_aux.se1}
\end{lem}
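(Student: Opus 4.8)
The statement is a moment bound on the martingale-difference part $U_{1,n}$, uniform in $\theta$, so the natural route is: (1) freeze $\theta$ and apply the Burkholder–Davis–Gundy (or a martingale $L^q$) inequality to the sum $\sumj \pi_{j-1}(\theta)(\eta(\ep_j(\theta)) - \E^{j-1}\eta(\ep_j(\theta)))$, reducing everything to bounds on $\E^{j-1}(|\pi_{j-1}(\theta)\eta(\ep_j(\theta))|^{q'})$ for a suitable $q'$; (2) control the $\theta$-supremum via a Sobolev-type inequality on the bounded convex $\Theta$, which costs us only an extra derivative $\p_\theta U_{1,n}(\theta)$, itself of the same structural form; (3) assemble the pieces. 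The conclusion \eqref{lem_aux.se1+1} is then immediate from $U_{1,n}(\theta)=O^{\ast}_{L^{q}}(\sqrt n)$ by taking expectations, Markov's inequality, and dividing by $nh^{1-1/\beta}$, recalling $\sqrt n h^{1-1/\beta}\to\infty$.

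\medskip
\textbf{Step 1 (fixed $\theta$).} For fixed $\theta$ the summands $\xi_{nj}:=\pi_{j-1}(\theta)(\eta(\ep_j(\theta)) - \E^{j-1}\eta(\ep_j(\theta)))$ form a martingale-difference array w.r.t.\ $(\mcf_{t_j})$. By the discrete BDG inequality, for $q\ge 2$,
\begin{equation}
\E\bigg(\bigg|\sumj\xi_{nj}\bigg|^{q}\bigg) \lesssim \E\bigg(\bigg(\sumj \E^{j-1}|\xi_{nj}|^{2}\bigg)^{q/2}\bigg) + \sumj \E|\xi_{nj}|^{q} \lesssim n^{q/2}\max_{j\le n}\E|\xi_{nj}|^{q},
\nonumber
\end{equation}
using Jensen/Hölder on the $\sumj$ inside the $q/2$-power. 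So it suffices to show $\E|\xi_{nj}|^q\lesssim 1$ uniformly in $n,j$. Now $|\xi_{nj}|\le 2|\pi_{j-1}(\theta)|\,(|\eta(\ep_j(\theta))| + \E^{j-1}|\eta(\ep_j(\theta))|)$; by assumption (ii), $|\eta(y)|\lesssim 1+\log(1+|y|)\lesssim 1+|y|^{\kappa}$ for any small $\kappa>0$, and by (i) $|\pi_{j-1}(\theta)|\lesssim 1+|X_{t_{j-1}}|^{C}$. Hence by Hölder it is enough to bound $\E^{j-1}|\ep_j(\theta)|^{\kappa'}$ for small $\kappa'>0$ and a high moment of $\sup_{t\le T}|X_t|$. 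Writing $\ep_j(\theta) = \{h^{1/\beta}c_{j-1}(\gam)\}^{-1}(\D_j X - h a_{j-1}(\al))$ and splitting $\D_j X = \int_j a(X_s,\al_0)ds + \int_j c(X_{s-},\gam_0)dJ_s$ via the SDE, one gets $|\ep_j(\theta)| \lesssim (1+|X_{t_{j-1}}|^{C})\{ h^{1-1/\beta} + h^{-1/\beta}|\D_j X - \text{(drift)}|\}$; the drift part contributes $O(h^{1-1/\beta})$, and the noise part is handled by \eqref{LusPag.ineq.result}, $\E(\sup_{t\le h}|J_t|^{\kappa})\lesssim h^{\kappa/\beta}$, together with \eqref{loc.X.ineq} and Hölder's inequality applied conditionally on $\mcf_{t_{j-1}}$ — care is needed because $c(X_{s-},\gam_0)$ is not $\mcf_{t_{j-1}}$-measurable, but Gronwall-type estimates as in \eqref{loc.X.ineq} and boundedness of increments let one replace it by $c_{j-1}(\gam_0)$ up to a negligible remainder. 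This yields $\E^{j-1}|\ep_j(\theta)|^{\kappa'}\lesssim 1+|X_{t_{j-1}}|^{C}$ for $\kappa'\in(0,\beta)$, hence $\max_{j\le n}\E|\xi_{nj}|^{q}\lesssim 1$ and $\E|\sumj\xi_{nj}|^{q}\lesssim n^{q/2}$.

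\medskip
\textbf{Step 2 (uniformity in $\theta$).} Since $\Theta\subset\mbbr^p$ is a bounded convex domain, the Sobolev embedding $W^{1,q}(\Theta)\hookrightarrow C(\overline\Theta)$ for $q>p$ gives
\begin{equation}
\E\Big(\sup_{\theta}|U_{1,n}(\theta)|^{q}\Big) \lesssim \int_{\Theta}\E|U_{1,n}(\theta)|^{q}d\theta + \int_{\Theta}\E|\p_\theta U_{1,n}(\theta)|^{q}d\theta.
\nonumber
\end{equation}
The first integrand is $O(n^{q/2})$ by Step 1. For the second, differentiating under the sum, $\p_\theta U_{1,n}(\theta)$ is again a sum of martingale differences of the same type, with $\pi_{j-1}$ replaced by $\p_\theta\pi_{j-1}$ and $\eta$ by a combination of $\eta$ and $y\p\eta(y)$ — and assumption (ii) was stated precisely so that $|\eta(y)|+|y\p\eta(y)|\lesssim 1+\log(1+|y|)$, while $\p_\theta\ep_j(\theta)$ contributes factors of the form $\ep_j(\theta)\times(\text{smooth in }X_{t_{j-1}})$ and $(\text{smooth})/(h^{1/\beta})\cdot h = h^{1-1/\beta}(\text{smooth})$, all absorbed by the same moment estimates (with a possibly slightly larger $\kappa'<\beta$ to accommodate the extra power of $\ep_j$). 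Hence the second integral is also $O(n^{q/2})$, so $\E(\sup_\theta|U_{1,n}(\theta)|^q)\lesssim n^{q/2}$ for all large $q$, and by Jensen for small $q$ too; this is exactly $U_{1,n}(\theta)=O^{\ast}_{L^q}(\sqrt n)$. Finally $\sup_\theta|(nh^{1-1/\beta})^{-1}U_{1,n}(\theta)| = O_p((\sqrt n h^{1-1/\beta})^{-1})$ by Markov, and this is $o_p(1)$ since $\sqrt n h^{1-1/\beta}=T^{1-1/\beta}n^{(2-\beta)/(2\beta)}\to\infty$.

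\medskip
\textbf{Main obstacle.} The delicate point is Step 1: obtaining the \emph{uniform in $j$} conditional moment bound $\E^{j-1}|\ep_j(\theta)|^{\kappa'}\lesssim 1+|X_{t_{j-1}}|^{C}$ for $\kappa'$ up to (but not reaching) $\beta$, because $J$ itself only has moments of order $<\beta$ and $\ep_j$ involves the non-$\mcf_{t_{j-1}}$-measurable coefficient $c(X_{s-},\gam_0)$ inside the stochastic integral. This forces one to combine \eqref{LusPag.ineq.result} with the increment estimate \eqref{loc.X.ineq}, a conditional Hölder step, and the truncation/localization \eqref{localization_1} to keep all the constants finite and uniform; the logarithmic growth hypothesis on $\eta$ in (ii) is the slack that makes $\kappa'$-arbitrarily-small suffice, which is why the lemma can afford $\eta$ (and $y\p\eta$) to be merely $\log$-bounded rather than bounded.
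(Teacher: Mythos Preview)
Your proposal is correct and follows essentially the same approach as the paper: Sobolev embedding on the bounded convex $\Theta$ to reduce uniformity to fixed-$\theta$ moment bounds on $U_{1,n}$ and $\p_\theta U_{1,n}$, Burkholder's inequality for the martingale-difference sums, and the key conditional estimate $\E^{j-1}|\ep_j(\theta)|^{r}\lesssim 1+|X_{t_{j-1}}|^{C}$ for $r\in(0,\beta)$ obtained from \eqref{LusPag.ineq.result} and \eqref{loc.X.ineq}. One small clarification: in Step~2 your parenthetical about needing ``a possibly slightly larger $\kappa'<\beta$ to accommodate the extra power of $\ep_j$'' is unnecessary --- the point of hypothesis (ii) is precisely that the three functions $\eta$, $\p\eta$ (bounded, since $\eta\in\mcc^1$ and $|y\p\eta(y)|\lesssim 1+\log(1+|y|)$), and $y\mapsto y\p\eta(y)$ arising from $\p_\theta\chi_j$ all satisfy the same $\log$-growth bound, so the same $\kappa'$ works throughout.
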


\begin{proof}
Since we are assuming that the parameter space $\Theta$ is a bounded convex domain, the Sobolev inequality \cite[p.415]{Ada73} is in force: for each $q>p\vee 2$,
\begin{equation}
\E\left(\sup_{\theta}|n^{-1/2}U_{1,n}(\theta)|^{q}\right) \lesssim 
\sup_{\theta}\E\left(|n^{-1/2}U_{1,n}(\theta)|^{q}\right)+\sup_{\theta}\E\left(|n^{-1/2}\p_{\theta}U_{1,n}(\theta)|^{q}\right).
\nonumber
\end{equation}
To complete the proof, it therefore suffices to show that both $\{n^{-1/2}U_{1,n}(\theta)\}$ and $\{n^{-1/2}\p_{\theta}U_{1,n}(\theta)\}$ are $L^{q}$-bounded for each $\theta$ and $q>p\vee 2$. Fix any $q>p\vee 2$ and $\theta$ in the rest of this proof.

Put $\chi_{j}(\theta)=\pi_{j-1}(\theta) \left(\eta(\ep_{j}(\theta)) - \E^{j-1}\{\eta(\ep_{j}(\theta))\} \right)$, so that $U_{1,n}(\theta)=\sumj \chi_{j}(\theta)$. Under the present regularity conditions we may pass the differentiation with respect to $\theta$ through the operator $\E^{j-1}$:
\begin{align}
\p_{\theta}\chi_{j}(\theta)
&=\p_{\theta}\pi_{j-1}(\theta) \Big( \eta(\ep_{j}(\theta)) - \E^{j-1}\{\eta(\ep_{j}(\theta))\} \Big) \nn\\
&{}\qquad+ \pi_{j-1}(\theta) \Big( \p\eta(\ep_{j}(\theta))\p_{\theta}\ep_{j}(\theta) - \E^{j-1}\{\p\eta(\ep_{j}(\theta))\p_{\theta}\ep_{j}(\theta)\} \Big).
\label{lem_aux.se1_4}
\end{align}
For each $n$, the sequences $\{\chi_{j}(\theta)\}_{j}$ and $\{\p_{\theta}\chi_{j}(\theta)\}_{j}$ form martingale difference arrays with respect to $(\mcf_{t_{j}})$, hence Burkholder's inequality gives $\E\{|n^{-1/2}\p_{\theta}^{k}U_{n}(\theta)|^{q}\}\lesssim n^{-1}\sumj \E\{|\p_{\theta}^{k}\chi_{j}(\theta)|^{q}\}$ for $k=0,1$. The required $L^{q}$-boundedness of $\{n^{-1/2}\p_{\theta}^{k}U_{1,n}(\theta)\}$ follows on showing that $\sup_{j\le n}\E(|\p_{\theta}^{k}\chi_{j}(\theta)|^{q})\lesssim 1$.

Observe that for $\beta\ge 1$ and $r\in(0,\beta)$,
\begin{align}
|\ep_{j}(\theta)|^{r}
&= \left| h^{-1/\beta}c_{j-1}^{-1}(\gam)\{\D_{j}X-ha_{j-1}(\al)\}\right|^{r} \nn\\
&\lesssim (1+|X_{t_{j-1}}|^{C})\left\{ |h^{-1/\beta}\D_{j}X|^{r} + h^{r(1-1/\beta)}(1+|X_{t_{j-1}}|^{C}) \right\} \nn\\
&\lesssim (1+|X_{t_{j-1}}|^{C})\left( |h^{-1/\beta}\D_{j}X|^{r} + 1 \right).
\label{lem_aux.se1_2}
\end{align}
Applying the estimate \eqref{LusPag.ineq.result} together with the linear growth property of $a(\cdot,\al_{0})$, the Lipschitz property of $c(\cdot,\gam_{0})$, the estimate \eqref{loc.X.ineq}, and Burkholder's inequality for the stochastic integral with respect to $J$, we derive the chain of inequalities:
\begin{align}
\E^{j-1}\left(|h^{-1/\beta}\D_{j}X|^{r}\right) 
&\lesssim h^{r(1-1/\beta)}\bigg(\frac{1}{h}\int_{j}\E^{j-1}\{|a(X_{s},\al_{0})|^{2}\}ds\bigg)^{r/2} \nn\\
&{}\qquad + h^{-r/\beta}\E^{j-1}\bigg(\bigg|\int_{j}(c(X_{s},\gam_{0})-c_{j-1}(\gam_{0}))dJ_{s}\bigg|^{r}\bigg) \nn\\
&{}\qquad + (1+|X_{t_{j-1}}|^{C})\E(|h^{-1/\beta}J_{h}|^{r}) \nn\\
&\lesssim (1+h^{r(1-1/\beta)})(1+|X_{t_{j-1}}|^{C}) + h^{-r/\beta}\bigg(\int_{j}\E^{j-1}(|X_{s}-X_{t_{j-1}}|^{2}) ds\bigg)^{r/2} \nn\\
&\lesssim (1+h^{r(1-1/\beta)})(1+|X_{t_{j-1}}|^{C}) + h^{-r/\beta}\left\{ h^{2}(1+|X_{t_{j-1}}|^{C}) \right\}^{r/2} \nn\\
&\lesssim 1+|X_{t_{j-1}}|^{C}.
\label{lem_aux.se1_3}
\end{align}
Using \eqref{lem_aux.se1_2} and \eqref{lem_aux.se1_3}, we arrive at the estimate
\begin{equation}
\E\left\{ (1+|X_{t_{j-1}}|^{C})|\ep_{j}(\theta)|^{r} \right\} \lesssim 1+\sup_{t\le T}\E(|X_{t}|^{C}) \lesssim 1
\nonumber
\end{equation}
valid for $r\in(0,\beta)$. By means of the condition on $\eta$,
\begin{align}
\E(|\chi_{j}(\theta)|^{q})
&\lesssim \E\big[(1+|X_{t_{j-1}}|^{C}) \E^{j-1}\{|\eta(\ep_{j}(\theta))|^{q}\}\big] \nn\\
&\lesssim \E\big[(1+|X_{t_{j-1}}|^{C}) \big(1+\E^{j-1}[\{\log(1+|\ep_{j}(\theta)|)\}^{q}]\big)\big] \nn\\
&\lesssim \E\big[(1+|X_{t_{j-1}}|^{C}) \big(1+\E^{j-1}\{|\ep_{j}(\theta)|^{r}\}\big)\big] \nn\\
&\lesssim 1+\sup_{t\le T}\E(|X_{t}|^{C}),
\nonumber
\end{align}
concluding that $\sup_{j\le n}\E(|\chi_{j}(\theta)|^{q}) \lesssim 1$.

Next we note that
\begin{equation}
\p_{\al}\ep_{j}(\theta) = -h^{1-1/\beta}\frac{\p_{\al}a_{j-1}(\al)}{c_{j-1}(\gam)},\qquad
\p_{\gam}\ep_{j}(\theta) = -\frac{\p_{\gam}c_{j-1}(\gam)}{c_{j-1}(\gam)}\ep_{j}(\theta).
\nonumber
\end{equation}
By \eqref{lem_aux.se1_4}, the components of $\p_{\theta}\chi_{j}(\theta)$ consists of the terms
\begin{align}
& \pi_{j-1}^{(1)}(\theta) \Big( \eta(\ep_{j}(\theta)) - \E^{j-1}\{\eta(\ep_{j}(\theta))\} \Big), \nn\\
& \pi_{j-1}^{(2)}(\theta) \Big( \p\eta(\ep_{j}(\theta)) - \E^{j-1}\{\p\eta(\ep_{j}(\theta))\} \Big), \nn\\
& \pi_{j-1}^{(3)}(\theta) \Big( \ep_{j}(\theta)\p\eta(\ep_{j}(\theta)) - \E^{j-1}\{\ep_{j}(\theta)\p\eta(\ep_{j}(\theta))\} \Big)
\nonumber
\end{align}
for some $\pi^{(i)}(x,\theta)$, $i=1,2,3$, all satisfying the conditions imposed on $\pi(x,\theta)$. Again taking the conditions on $\eta$ into account, we can proceed as in the previous paragraph to obtain $\sup_{j\le n}\E(|\p_{\theta}\chi_{j}(\theta)|^{q}) \lesssim 1$. The proof is complete.
\end{proof}

\subsubsection{Uniform estimate of the predictable (compensator) part $U_{2,n}$}

Introduce the notation:
\begin{align}
& \del'_{j}(\gam) = \frac{c_{j-1}(\gam_{0})}{c_{j-1}(\gam)}h^{-1/\beta}\D_{j}J, \qquad \mfb(x,\theta) =c^{-1}(x,\gam)\{a(x,\al_{0})-a(x,\al)\}, \nn\\
& a^{\D}_{j-1}(s) = a(X_{s},\al_{0})-a_{j-1}(\al_{0}), \qquad c^{\D}_{j-1}(s) = c(X_{s},\gam_{0})-c_{j-1}(\gam_{0}), \nn\\
& r_{j}(\gam)=\frac{h^{-1/\beta}}{c_{j-1}(\gam)}\int_{j} a^{\D}_{j-1}(s) ds 
+ \frac{h^{-1/\beta}}{c_{j-1}(\gam)}\int_{j} c^{\D}_{j-1}(s-) dJ_{s}.
\nonumber
\end{align}
Then
\begin{align}
\ep_{j}(\theta)=\del'_{j}(\gam) + h^{1-1/\beta}\mfb_{j-1}(\theta) + r_{j}(\gam).
\nn\label{epj.form}
\end{align}
Expanding $\eta$ we have
\begin{equation}
U_{2,n}(\theta)=U_{2,n}^{0}(\theta)+U_{2,n}'(\theta)+U_{2,n}''(\theta),
\label{U2n_deco}
\end{equation}
where, with $\overline{r}_{j}(\theta;\eta):=\int_{0}^{1}\p\eta(\del'_{j}(\gam) + h^{1-1/\beta}\mfb_{j-1}(\theta) + sr_{j}(\gam))ds$ and $\pi'(x,\theta):=\pi(x,\theta)c^{-1}(x,\gam)$,
\begin{align}
U_{2,n}^{0}(\theta) &:= \sumj \pi_{j-1}(\theta) \E^{j-1}\left\{ \eta\left( \del'_{j}(\gam)+h^{1-1/\beta}\mfb_{j-1}(\theta) \right) \right\},
\nn\\
U_{2,n}'(\theta) &:= h^{-1/\beta}\sumj \pi_{j-1}'(\theta) \E^{j-1}\bigg( \overline{r}_{j}(\theta;\eta)\,\int_{j}a^{\D}_{j-1}(s)ds \bigg),
\nn\\
U_{2,n}''(\theta) &:= h^{-1/\beta}\sumj \pi_{j-1}'(\theta) \E^{j-1}\bigg( \overline{r}_{j}(\theta;\eta)\,\int_{j}c^{\D}_{j-1}(s-)dJ_{s} \bigg).
\nonumber
\end{align}
A uniform law of large numbers for $(nh^{1-1/\beta})^{-1}U_{2,n}(\theta)$ will be one of the key ingredients in the proofs. Lemma \ref{lem_aux.se_r1} below reveals that the terms $U_{2,n}'(\theta)$ and $U_{2,n}''(\theta)$ have no contribution in the limit; we will deal with the remaining term $U_{2,n}^{0}(\theta)$ in Section \ref{sec_ulln}.

\medskip

Let us recall It\^{o}'s formula, which is valid for any $\mcc^{\beta}$-function\footnote{In case of $\beta\in(1,2)$, this means that $\psi$ is $\mcc^{1}$ and the derivative $\p\psi$ is locally H\"older continuous with index $\beta-[\beta]$.} $\psi$ (see \cite[Theorems 3.2.1b) and 3.2.2a)]{JacPro12}): for $t>s$,
\begin{align}
\psi(X_{t}) &= \psi(X_{s}) + \int_{s}^{t}\p\psi(X_{u-})dX_{u} \nn\\
&{}\qquad
+\int_{s}^{t}\int \left\{\psi(X_{u-}+c(X_{u-},\gam_{0})z) -\psi(X_{u-}) -\p\psi(X_{u-})c(X_{u-},\gam_{0})z\right\}\mu(du,dz).
\nn
\end{align}
Let $\mca$ denote the formal infinitesimal generator of $X$:
\begin{align}
\mca\psi(x) = \p\psi(x)a(x,\al_{0}) + \int\left\{\psi(x+c(x,\gam_{0})z) -\psi(x) -\p\psi(x)c(x,\gam_{0})z\right\}\nu(dz),
\nonumber
\end{align}
the second term in the right-hand side being assumed well-defined. Then
\begin{equation}
\psi(X_{t}) = \psi(X_{s}) + \int_{s}^{t}\mca\psi(X_{u})du + \int_{s}^{t}\int \left\{\psi(X_{u-}+c(X_{u-},\gam_{0})z) -\psi(X_{u-}) \right\}\tilde{\mu}(du,dz).
\label{ito_C.beta_A}
\end{equation}
Obviously, we have $|\mca\psi(x)|\lesssim 1+|x|^{C}$ for $\psi$ such that the derivatives $\p^{k}\psi$ for $k\in\{0,1,2\}$ 
exist and have polynomial majorants.

\begin{lem}
Suppose that:
\begin{itemize}
\item[{\rm (i)}] $\pi\in\mcc^{1}(\mbbr\times\Theta)$ and $\sup_{\theta}\{|\pi(x,\theta)|+|\p_{\theta}\pi(x,\theta)|\}\lesssim 1+|x|^{C}$;
\item[{\rm (ii)}] $\eta\in\mcc^{1}(\mbbr)$ with bounded first derivative.
\end{itemize}
Then we have $U_{2,n}'(\theta)=O_{L^{q}}^{\ast}(nh^{2-1/\beta})$ and $U_{2,n}''(\theta)=O_{L^{q}}^{\ast}(nh^{2-1/\beta})$ for every $q>0$. In particular, we have $\sup_{\theta}|n^{-1/2}U_{2,n}'(\theta)|=o_{p}(1)$ and $\sup_{\theta}|n^{-1/2}U_{2,n}''(\theta)|=o_{p}(1)$.
\label{lem_aux.se_r1}
\end{lem}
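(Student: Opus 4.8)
The plan is to estimate each of $U_{2,n}'(\theta)$ and $U_{2,n}''(\theta)$ term by term, using the moment bounds \eqref{loc.X.ineq}, \eqref{LusPag.ineq.result}, and the Sobolev inequality to promote the pointwise-in-$\theta$ $L^{q}$-estimates to uniform ones, exactly as in the proof of Lemma \ref{lem_aux.se1}. Since $\p\eta$ is bounded, $|\overline{r}_{j}(\theta;\eta)| \le \|\p\eta\|_{\infty} \lesssim 1$, so the integrand factor $\overline{r}_{j}(\theta;\eta)$ only contributes a constant. Thus for $U_{2,n}'(\theta)$ it suffices to control $h^{-1/\beta}|\pi'_{j-1}(\theta)|\,\E^{j-1}|\int_{j}a^{\D}_{j-1}(s)\,ds|$. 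First I would bound the inner integral: by the Lipschitz and growth conditions in Assumption \ref{A_coeff}, $|a^{\D}_{j-1}(s)| = |a(X_{s},\al_{0})-a_{j-1}(\al_{0})| \lesssim (1+|X_{t_{j-1}}|^{C})|X_{s}-X_{t_{j-1}}|$, so using \eqref{loc.X.ineq} (with $q\ge 2$ and Jensen/Hölder to get the first moment), $\E^{j-1}|X_{s}-X_{t_{j-1}}| \lesssim h^{1/2}(1+|X_{t_{j-1}}|^{C})$, whence $\E^{j-1}|\int_{j}a^{\D}_{j-1}(s)\,ds| \lesssim h\cdot h^{1/2}(1+|X_{t_{j-1}}|^{C}) = h^{3/2}(1+|X_{t_{j-1}}|^{C})$. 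Actually, to reach the claimed order $nh^{2-1/\beta}$ one needs the sharper bound $\E^{j-1}|\int_{j}a^{\D}_{j-1}(s)\,ds| \lesssim h^{2}(1+|X_{t_{j-1}}|^{C})$; this is obtained by writing $a^{\D}_{j-1}(s)$ via It\^{o}'s formula \eqref{ito_C.beta_A} applied to $\psi = a(\cdot,\al_{0})$ (which is $\mcc^{2}$, hence $\mcc^{\beta}$), so that $a^{\D}_{j-1}(s) = \int_{t_{j-1}}^{s}\mca a(X_{u},\al_{0})\,du + (\text{martingale increment})$, the conditional expectation of the martingale part vanishing and $|\mca a(X_{u},\al_{0})| \lesssim 1+|X_{u}|^{C}$. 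This yields $\E^{j-1}|\int_{j}a^{\D}_{j-1}(s)\,ds| \le \int_{j}\int_{t_{j-1}}^{s}\E^{j-1}|\mca a(X_{u},\al_{0})|\,du\,ds \lesssim h^{2}(1+|X_{t_{j-1}}|^{C})$. Multiplying by $h^{-1/\beta}|\pi'_{j-1}(\theta)| \lesssim h^{-1/\beta}(1+|X_{t_{j-1}}|^{C})$ and by the constant from $\overline{r}_{j}$, each summand is $O^{\ast}_{L^{q}}(h^{2-1/\beta})$, and summing over $j\le n$ gives $U_{2,n}'(\theta) = O^{\ast}_{L^{q}}(nh^{2-1/\beta})$ pointwise; the $\p_{\theta}$-derivative is handled identically after noting that differentiating under $\E^{j-1}$ is permitted (bounded $\p\eta$, polynomial bounds on $\p_\theta\pi$, $\p_\theta\ep_j$), so the Sobolev inequality upgrades this to the uniform statement.

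For $U_{2,n}''(\theta)$, the martingale-in-$s$ structure of the integrand is what saves a power of $h^{1/\beta}$ beyond the naive count. Here $\E^{j-1}(\overline{r}_{j}(\theta;\eta)\int_{j}c^{\D}_{j-1}(s-)\,dJ_{s})$ — if $\overline{r}_{j}$ were nonrandom this would vanish by the martingale property of $\int c^{\D}\,dJ$, but $\overline{r}_{j}(\theta;\eta)$ depends on $\D_{j}J$ through $\del'_{j}(\gam)$. The plan is to bound this conditional expectation directly by Cauchy--Schwarz in $\E^{j-1}$: $\E^{j-1}|\overline{r}_{j}(\theta;\eta)\int_{j}c^{\D}_{j-1}(s-)\,dJ_{s}| \le \|\p\eta\|_{\infty}\,\E^{j-1}|\int_{j}c^{\D}_{j-1}(s-)\,dJ_{s}|$. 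By Burkholder's inequality for the stochastic integral with respect to $J$ (using $\int_{|z|>y}\nu(dz)\lesssim y^{-\beta}$ and \eqref{LusPag.ineq.result}-type estimates, together with $|c^{\D}_{j-1}(s)| \lesssim (1+|X_{t_{j-1}}|^{C})|X_{s}-X_{t_{j-1}}|$ and \eqref{loc.X.ineq}), one gets, for any $\kappa\in(1,\beta)$, $\E^{j-1}|\int_{j}c^{\D}_{j-1}(s-)\,dJ_{s}|^{\kappa} \lesssim \E^{j-1}(\sup_{s\le h}|c^{\D}_{j-1}(s)|^{\kappa})\cdot h^{??}$... more cleanly: treat $\int_j c^{\D}_{j-1}(s-)\,dJ_s$ pathwise as bounded by $\sup_{s\in j}|c^{\D}_{j-1}(s)| \cdot (\text{total variation of }J\text{ on }[t_{j-1},t_j])$ only if $J$ has finite variation, which it need not. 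So instead I would keep the stochastic-integral Burkholder estimate: for $\kappa \in [1,\beta)$, $\E^{j-1}|\int_j c^{\D}_{j-1}\,dJ|^{\kappa} \lesssim h\,\E^{j-1}(\sup_{s\in j}|c^{\D}_{j-1}(s)|^{\kappa}\,\|\nu\|) + \ldots \lesssim (1+|X_{t_{j-1}}|^{C})\,h^{1+\kappa/2}$ after inserting \eqref{loc.X.ineq}; taking $\kappa=1$ and combining with the $h^{-1/\beta}$ prefactor gives each summand of order $h^{-1/\beta}\cdot h^{3/2}$, hence $U_{2,n}'' = O^{\ast}_{L^q}(nh^{3/2-1/\beta})$, which is even better than $nh^{2-1/\beta}$ when $\beta<2$... — I would double-check the exact exponent against the statement, but in any case it is $o(n^{1/2})$ under \eqref{fixed.T} (where $h = T/n$), since $nh^{2-1/\beta} = T^{2-1/\beta}n^{1/\beta-1} \to 0$ and $n^{-1/2}\cdot nh^{2-1/\beta} = n^{1/2}\cdot O(n^{1/\beta-2}) \to 0$ for $\beta\in[1,2)$. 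As before, the $\p_\theta$-derivative of $U''_{2,n}$ splits into analogous terms (one with $\p_\theta\pi'$, one with $\p\eta\cdot\p_\theta\ep_j$ inside, and one from differentiating $\overline r_j$, each bounded by the same mechanism), and the Sobolev inequality yields the uniform bound.

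The main obstacle is the second term, $U_{2,n}''(\theta)$: one must resist the temptation to exploit the martingale property too naively (it fails because $\overline{r}_{j}$ is $\D_{j}J$-measurable) and instead push through a Burkholder/Kunita estimate for the stochastic integral against the pure-jump $J$ with Blumenthal--Getoor index $\beta$, which requires care about which moments of $J_h$ exist — precisely $\E(\sup_{t\le h}|J_t|^{\kappa}) \lesssim h^{\kappa/\beta}$ only for $\kappa<\beta$, so one cannot use $\kappa=2$ directly and must interpolate or use a localized $L^1$/$L^\kappa$ estimate combined with \eqref{loc.X.ineq} on $c^{\D}_{j-1}$. Once the correct order $O^{\ast}_{L^q}(nh^{2-1/\beta})$ (or better) is established for both terms, the conclusion $\sup_\theta|n^{-1/2}U_{2,n}'(\theta)| = o_p(1)$ and $\sup_\theta|n^{-1/2}U_{2,n}''(\theta)| = o_p(1)$ is immediate from Chebyshev, since $n^{-1/2}\cdot nh^{2-1/\beta} = \sqrt{n}\,h^{2-1/\beta} \to 0$ for $\beta\in[1,2)$ under \eqref{fixed.T}.
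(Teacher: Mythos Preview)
Your plan has a genuine gap: bounding $|\overline{r}_{j}(\theta;\eta)|\le\|\p\eta\|_{\infty}$ and then estimating $\E^{j-1}|\int_{j}a^{\D}_{j-1}(s)\,ds|$ (resp.\ $\E^{j-1}|\int_{j}c^{\D}_{j-1}(s-)\,dJ_{s}|$) cannot reach the order $h^{2}$ needed per summand. For $U'_{2,n}$, your It\^{o} argument yields $|\E^{j-1}(\int_{j}a^{\D}_{j-1}(s)\,ds)|\lesssim h^{2}$, but this is \emph{not} a bound on $\E^{j-1}|\int_{j}a^{\D}_{j-1}(s)\,ds|$: the martingale part of $a^{\D}_{j-1}(s)$ does not vanish inside the absolute value, and it contributes $h^{1/2}$ to $\E^{j-1}|a^{\D}_{j-1}(s)|$, so the best you get is $h^{3/2}$. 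That yields $U'_{2,n}=O^{\ast}_{L^{q}}(nh^{3/2-1/\beta})$, and then $n^{-1/2}U'_{2,n}=O^{\ast}_{L^{q}}(\sqrt{n}\,h^{3/2-1/\beta})$, which is only $O(1)$ at $\beta=1$ and hence does not give $o_{p}(1)$. For $U''_{2,n}$ the situation is worse: under the localization one has $\E^{j-1}|\int_{j}c^{\D}_{j-1}\,dJ_{s}|\le(\E^{j-1}|\cdot|^{2})^{1/2}\lesssim h$ (your claimed $h^{3/2}$ is not justified), so your route gives only $U''_{2,n}=O^{\ast}_{L^{q}}(nh^{1-1/\beta})$ and $n^{-1/2}U''_{2,n}=O^{\ast}_{L^{q}}(\sqrt{n}\,h^{1-1/\beta})\to\infty$ for every $\beta\in[1,2)$. (Also, $nh^{3/2-1/\beta}$ is \emph{larger}, not smaller, than $nh^{2-1/\beta}$ as $h\to 0$.)

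The missing idea is that one must exploit the conditional \emph{covariance} structure between $\overline{r}_{j}(\theta;\eta)$ and the stochastic integrals, not just their sizes. The paper first centers both factors, writing $\overline{r}_{j}=\E^{j-1}\{\overline{r}_{j}\}+m_{j}$ and $a^{\D}_{j-1}(s)=\E^{j-1}\{a^{\D}_{j-1}(s)\}+\tilde{a}^{\D}_{j-1}(s)$; the ``product of means'' piece is handled by your It\^{o} bound \eqref{U'_so.1}. For the centered product $\E^{j-1}\{m_{j}\,\tilde{a}^{\D}_{j-1}(s)\}$ one invokes the martingale representation theorem to write the bounded martingale $t\mapsto\E\{m_{j}|\mcf_{t}\}$ as $\int_{t_{j-1}}^{t}\int\xi_{j}(u,z)\,\tilde{\mu}(du,dz)$, and then uses the integration-by-parts (isometry) identity for $\tilde{\mu}$-stochastic integrals to convert the product of two martingale increments into a single $du\,\nu(dz)$-integral over $[t_{j-1},s]$, which gains the extra factor $h$. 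For $U''_{2,n}$ this device has to be applied \emph{twice}. Without this representation/covariation step the required order $O^{\ast}_{L^{q}}(nh^{2-1/\beta})$ is out of reach.
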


\begin{proof}
In this proof, $q$ denotes any positive real greater than or equal to $2$. We begin with $U'_{2,n}(\theta)$. 
Applying \eqref{ito_C.beta_A} with $\psi(x)=a(x,\al_{0})$ and then taking the conditional expectation, we get
\begin{align}
\bigg|\E^{j-1}\bigg(\int_{j}a^{\D}_{j-1}(s)ds\bigg)\bigg|
&= \bigg|\int_{j}\E^{j-1}\{a^{\D}_{j-1}(s)\}ds\bigg| \nn\\
&\le \int_{j}\int_{t_{j-1}}^{s}\E^{j-1}\left\{|\mca a(X_{u},\al_{0})|\right\}duds \nn\\
&\lesssim \int_{j}\int_{t_{j-1}}^{s}\left\{ 1+\E^{j-1}(|X_{u}|^{C})\right\}duds \nn\\
&\lesssim \int_{j}\int_{t_{j-1}}^{s}(1+|X_{t_{j-1}}|^{C})duds = O^{\ast}_{L^{q}}(h^{2}).
\label{U'_so.1}
\end{align}
Write $m_{j}(\theta;\eta) = \overline{r}_{j}(\theta;\eta) - \E^{j-1}\{\overline{r}_{j}(\theta;\eta)\}$ and $\tilde{a}^{\D}_{j-1}(s) = a^{\D}_{j-1}(s)-\E^{j-1}\{a^{\D}_{j-1}(s)\}$. 
Using \eqref{U'_so.1} and noting that $\overline{r}_{j}(\theta;\eta)$ is essentially bounded, we get
\begin{align}
U_{2,n}'(\theta) &= h^{-1/\beta}\sumj \pi_{j-1}'(\theta) \E^{j-1}\bigg( m_{j}(\theta;\eta)\int_{j}\tilde{a}^{\D}_{j-1}(s)ds \bigg) +O^{\ast}_{L^{q}}(nh^{2-1/\beta}) \nn\\
&= h^{-1/\beta}\sumj \pi_{j-1}'(\theta) \int_{j}\E^{j-1}\left\{ m_{j}(\theta;\eta)\tilde{a}^{\D}_{j-1}(s) \right\}ds +O^{\ast}_{L^{q}}(nh^{2-1/\beta}).
\nn
\end{align}
By Jensen's inequality, the claim $U'_{2,n}(\theta)=O_{L^{q}}^{\ast}(nh^{2-1/\beta})$ follows if we show
\begin{equation}
\sup_{n}\sup_{j\le n}\sup_{s\in[t_{j-1}, t_{j}]}\E\bigg(\sup_{\theta}\bigg|
\frac{1}{h}\E^{j-1}\left\{ m_{j}(\theta;\eta)\tilde{a}^{\D}_{j-1}(s) \right\}\bigg|^{q}
\bigg) \lesssim 1.
\label{U'_so.3}
\end{equation}
By \eqref{ito_C.beta_A} we may express $\tilde{a}^{\D}_{j-1}(s)$ as
\begin{equation}
\tilde{a}^{\D}_{j-1}(s)=\int_{t_{j-1}}^{s}f(X_{t_{j-1}},X_{u})du + \int_{t_{j-1}}^{s}\int g(X_{u-},z) \tilde{\mu}(du,dz),
\label{U'_so.6}
\end{equation}
where $\E^{j-1}\{f(X_{t_{j-1}},X_{u})\}=0$ with $f(x,x')$ being at most of polynomial growth in $(x,x')$, and where
\begin{equation}
g(x,z):=a(x+zc(x,\gam_{0}),\al_{0})-a(x,\al_{0}).
\nonumber
\end{equation}
Hence, for \eqref{U'_so.3} it suffices to prove
\begin{equation}
\sup_{n}\sup_{j\le n}\sup_{s\in[t_{j-1}, t_{j}]}\E\bigg(\sup_{\theta}\bigg|
\frac{1}{h}\E^{j-1}\bigg( m_{j}(\theta;\eta) \int_{t_{j-1}}^{s}\int g(X_{u-},z) \tilde{\mu}(du,dz)\bigg|^{q}
\bigg) <\infty.
\label{U'_so.4}
\end{equation}
Let $H_{j,t}(\theta;\eta):=\E\left\{m_{j}(\theta;\eta)|\,\mcf_{t}\right\}$ for $t\in[t_{j-1},t_{j}]$; then, $H_{j,t_{j}}(\theta;\eta)=m_{j}(\theta;\eta)$. Recall we are supposing \eqref{Ft_def}: $\mcf_{t}=\sig(X_{0})\vee\sig(J_{s};s\le t)$.
By its construction, $\{H_{j,t}(\theta;\eta),\mcf_{t_{j-1}}\vee\sig(J_{t});\,t\in[t_{j-1},t_{j}]\}$ is an essentially bounded martingale. According to the martingale representation theorem \cite[Theorem III.4.34]{JacShi03}, the process $H_{j,t}(\theta)$ can be represented as a stochastic integral of the form
\begin{equation}
H_{j,t}(\theta;\eta) = \int_{t_{j-1}}^{t}\int\xi_{j}(s,z;\theta)\tilde{\mu}(ds,dz),\qquad t\in[t_{j-1},t_{j}],
\label{U'_so.5}
\end{equation}
with a bounded 
predictable process $s\mapsto\xi_{j}(s,z;\theta)$ such that
\begin{equation}
\sup_{n}\sup_{j\le n}\sup_{s\in[t_{j-1}, t_{j}]}\sup_{\theta}\E^{j-1}\bigg(\int \xi_{j}^{2}(s,z;\theta)\nu(dz)\bigg) <\infty.
\nonumber
\end{equation}
Now, we look at the quantity inside the absolute value sign $|\cdots|$ in the left-hand side of \eqref{U'_so.4}.
By conditioning with respect to $\mcf_{s}$ inside the sign ``$\E^{j-1}$'', substituting the expression \eqref{U'_so.5} with $t=s$, and then applying the integration-by-parts formula for martingales, it follows that the quantity equals
\begin{equation}
\frac{1}{h}\E^{j-1}\bigg( \int_{t_{j-1}}^{s}\int \xi_{j}(u,z;\theta)g(X_{u-},z) \nu(dz)du \bigg).
\nonumber
\end{equation}
By the regularity conditions on $a(x,\al_{0})$ and $c(x,\gam_{0})$ we have $|g(x,z)| \lesssim |z|(1+|x|)$.
It follows from this bound together with \eqref{loc.X.ineq} and Jensen and Cauchy-Schwarz inequalities that
\begin{align}
& \E\bigg\{ \sup_{\theta}\bigg|\frac{1}{h}\E^{j-1}\bigg( \int_{t_{j-1}}^{s}\int \xi_{j}(u,z;\theta)g(X_{u-},z) \nu(dz)du \bigg)\bigg|^{q} \bigg\} 
\nn\\
&\lesssim \frac{1}{h}\int_{t_{j-1}}^{s}\E\bigg\{ \sup_{\theta} \E^{j-1}\bigg(\bigg| \int \xi_{j}(u,z;\theta)g(X_{u-},z) \nu(dz) \bigg|\bigg)^{q} \bigg\} du \nn\\
&\lesssim \frac{1}{h}\int_{t_{j-1}}^{s}\E\bigg[ \bigg\{\sup_{\theta} \E^{j-1}\bigg(\int \xi_{j}^{2}(u,z;\theta) \nu(dz)\bigg)\bigg\}^{q/2}
\bigg\{\E^{j-1}\bigg(\int g^{2}(X_{u-},z) \nu(dz)\bigg)\bigg\}^{q/2}\bigg] du \nn\\
& \lesssim \frac{1}{h}\int_{t_{j-1}}^{s}\E\left\{ \E^{j-1}\left( 1+ |X_{u}|^{C} \right) \right\} du \nn\\
&\lesssim \frac{1}{h}\int_{t_{j-1}}^{s}\E\left( 1+ |X_{t_{j-1}}|^{C} \right)du \lesssim 1.
\nonumber
\end{align}
This proves \eqref{U'_so.4}, concluding that $U'_{2,n}(\theta)=O_{L^{q}}^{\ast}(nh^{2-1/\beta})$.

\medskip

Next we consider $U_{2,n}''(\theta)$. Using the martingale representation for $m_{j}(\theta;\eta)$ as before, we have
\begin{align}
U_{2,n}''(\theta) &= h^{-1/\beta}\sumj \pi_{j-1}'(\theta) 
\E^{j-1}\bigg(m_{j}(\theta;\eta)\,\int_{j}c^{\D}_{j-1}(s-)dJ_{s}\bigg)
\nn\\
&{}\qquad + h^{-1/\beta}\sumj \pi_{j-1}'(\theta) \E^{j-1}\{\overline{r}_{j}(\theta;\eta)\} \E^{j-1}\bigg(\int_{j}c^{\D}_{j-1}(s-)dJ_{s} \bigg)
\nn\\
&=h^{-1/\beta}\sumj \pi_{j-1}'(\theta) \E^{j-1}\bigg( \D_{j}H_{j}(\theta;\eta)\,\int_{j}c^{\D}_{j-1}(s-)dJ_{s}\bigg) \nn\\
&=h^{-1/\beta}\sumj \pi_{j-1}'(\theta) \E^{j-1}\bigg( \int_{j}\int\xi_{j}(s,z;\theta)\tilde{\mu}(ds,dz)\,\int_{j}\int c^{\D}_{j-1}(s-)z \tilde{\mu}(ds,dz)\bigg)
\nn\\
&=h^{-1/\beta}\sumj \pi_{j-1}'(\theta) \E^{j-1}\bigg( \int_{j}\int \xi_{j}(s,z;\theta)z c^{\D}_{j-1}(s)\nu(dz)ds \bigg).
\nn
\end{align}
As in the case of $a^{\D}_{j-1}$, we have $|\E^{j-1}\{c^{\D}_{j-1}(s)\}| \le \int_{t_{j-1}}^{s}\E^{j-1}\{|\mca c(X_{u},\gam_{0})|\}du=O_{L^{q}}^{\ast}(h)$. Hence
\begin{align}
U_{2,n}''(\theta) 
&=h^{-1/\beta}\sumj \pi_{j-1}'(\theta) \int_{j}\E^{j-1}\left( \tilde{\Xi}_{j,s}(\theta)\tilde{c}^{\D}_{j-1}(s)\right) ds + O_{L^{q}}^{\ast}(nh^{2-1/\beta}),
\label{U''_so.2}
\end{align}
where $\tilde{\Xi}_{j,s}(\theta):=\int \xi_{j}(s,z;\theta)z\nu(dz) - \E^{j-1}\{\int \xi_{j}(s,z;\theta)z\nu(dz)\}$ for $s\in[t_{j-1},t_{j}]$ and $\tilde{c}^{\D}_{j-1}(s) := c^{\D}_{j-1}(s)-\E^{j-1}\{c^{\D}_{j-1}(s)\}$.
We have $\sup_{s\in[t_{j-1}, t_{j}]}\sup_{\theta}\E^{j-1}\{|\tilde{\Xi}_{j,s}(\theta)|^{2}\} \lesssim 1$ and $\tilde{c}^{\D}_{j-1}(s)$ admits a similar representation to \eqref{U'_so.6}.
Now we once more apply the martingale representation theorem: for each $j$ and $s\in[t_{j-1},t_{j}]$, the processes $M^{\prime j}_{u}(\theta):=\E^{j-1}\{\tilde{\Xi}_{j,s}(\theta)|\mcf_{u}\}$ and $M^{\prime\prime j}_{u}:=\E^{j-1}\{\tilde{c}^{\D}_{j-1}(s)|\mcf_{u}\}$ for $u\in[t_{j-1},s]$ are martingales with respect to the filtration $\{\mcf_{t_{j-1}}\vee\sig(J_{u}):\, u\in[t_{j-1},s]\}$, hence there correspond predictable processes $m^{\prime j}_{u}(z;\theta)$ and $m^{\prime\prime j}_{u}(z)$ such that $M^{\prime j}_{s}(\theta) = \int_{t_{j-1}}^{s}\int m^{\prime j}_{u}(z;\theta)\tilde{\mu}(du,dz)$ and $M^{\prime\prime j}_{s} = \int_{t_{j-1}}^{s}\int m^{\prime\prime j}_{u}(z)\tilde{\mu}(du,dz)$,
and that $\sup_{\theta}\E^{j-1}\{\int (m^{\prime j}_{u}(z;\theta))^{2}\nu(dz)\} \vee \E^{j-1}\{\int (m^{\prime\prime j}_{u}(z))^{2}\nu(dz)\} \lesssim 1$.
Thus, using the integration by parts formula as before we can rewrite \eqref{U''_so.2} as
\begin{align}
U_{2,n}''(\theta) &= nh^{2-1/\beta} \cdot \frac{1}{n}\sumj \pi_{j-1}'(\theta)\frac{1}{h^{2}}\int_{j}\int_{t_{j-1}}^{s}
\E^{j-1}\bigg(\int m^{\prime j}_{u}(z;\theta)m^{\prime\prime j}_{u}(z)\nu(dz)\bigg)duds \nn\\
&{}\qquad + O_{L^{q}}^{\ast}(nh^{2-1/\beta}).
\nonumber
\end{align}
We can apply Cauchy-Schwarz inequality to conclude that the first term in the right-hand side is $O_{L^{q}}^{\ast}(nh^{2-1/\beta})$, hence so is $U_{2,n}''(\theta)$.

Since $\sqrt{n}h^{2-1/\beta}\lesssim h^{3/2-1/\beta}\to 0$ for $\beta\in[1,2)$, the last part of the lemma is trivial.
The proof is thus complete.
\end{proof}

\subsubsection{Uniform law of large numbers}\label{sec_ulln}

Building on Lemmas \ref{lem_aux.se1} and \ref{lem_aux.se_r1}, we now turn to the uniform law of large numbers for $U_{n}(\theta)=U_{1,n}(\theta)+U_{2,n}(\theta)$. First we note the following auxiliary result.

\begin{lem}
For any measurable function $f:\mbbr\times\overline{\Theta}\to\mbbr$ such that
\begin{equation}
\sup_{\theta}\left\{ |f(x,\theta)| + |\p_{x}f(x,\theta)| \right\} \lesssim 1+|x|^{C},
\nonumber
\end{equation}
we have $(h=T/n)$
\begin{equation}
\sup_{\theta}\sup_{t\le T}\bigg|\frac{1}{n}\sum_{j=1}^{[t/h]} f(X_{t_{j-1}},\theta)
-\frac{1}{T}\int_{0}^{t}f(X_{s},\theta)ds\bigg|\cip 0.
\nonumber
\end{equation}
\label{lem.aux.LLN}
\end{lem}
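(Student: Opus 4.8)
The plan is to reduce the claimed uniform law of large numbers to an $L^{1}$-estimate on the discretization error of the path of $X$, relying on the moment bounds \eqref{loc.X.ineq}. First I would rewrite the Riemann sum as an integral of the time-discretized step process \eqref{disc.X}: since $h=T/n$,
\begin{equation}
\frac{1}{n}\sum_{j=1}^{[t/h]}f(X_{t_{j-1}},\theta)=\frac{1}{T}\int_{0}^{[t/h]h}f(X^{(n)}_{s},\theta)\,ds.
\nonumber
\end{equation}
Splitting $\frac{1}{T}\int_{0}^{t}f(X_{s},\theta)\,ds$ at the point $[t/h]h$, the boundary piece $\frac{1}{T}\int_{[t/h]h}^{t}f(X_{s},\theta)\,ds$ is bounded in absolute value by $\frac{h}{T}\sup_{s\le T}|f(X_{s},\theta)|\lesssim \frac{h}{T}(1+\sup_{s\le T}|X_{s}|^{C})$, which tends to $0$ uniformly in $(t,\theta)$ because $\E(\sup_{s\le T}|X_{s}|^{C})<\infty$ by \eqref{loc.X.ineq}. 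It therefore suffices to control $\sup_{\theta}\sup_{t\le T}\big|\frac{1}{T}\int_{0}^{[t/h]h}\{f(X^{(n)}_{s},\theta)-f(X_{s},\theta)\}\,ds\big|$; passing to absolute values under the integral, this is dominated by the $t$-free quantity $\sup_{\theta}\frac{1}{T}\int_{0}^{T}|f(X^{(n)}_{s},\theta)-f(X_{s},\theta)|\,ds$.

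Next I would bring the supremum over $\theta$ inside the integral. Because $\Theta$ is a bounded domain and $\p_{x}f(\cdot,\theta)$ exists with a polynomial majorant uniform in $\theta$, the mean-value theorem in the $x$-variable gives
\begin{equation}
\sup_{\theta}\big|f(X^{(n)}_{s},\theta)-f(X_{s},\theta)\big|\lesssim |X_{s}-X^{(n)}_{s}|\,\big(1+|X_{s}|^{C}+|X^{(n)}_{s}|^{C}\big).
\nonumber
\end{equation}
Taking expectations, using the Cauchy--Schwarz inequality, and invoking $\E(|X_{s}-X^{(n)}_{s}|^{2})\lesssim h$ together with $\sup_{t\le T}\E(|X_{t}|^{q})<\infty$ for every $q$ --- both immediate from \eqref{loc.X.ineq} --- we get
\begin{equation}
\E\bigg(\sup_{\theta}\frac{1}{T}\int_{0}^{T}\big|f(X^{(n)}_{s},\theta)-f(X_{s},\theta)\big|\,ds\bigg)\lesssim \frac{1}{T}\int_{0}^{T}h^{1/2}\,ds\lesssim h^{1/2}\to 0.
\nonumber
\end{equation}
Convergence in $L^{1}$ yields convergence in probability via Markov's inequality, and combining this with the boundary estimate finishes the proof.

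The argument is essentially routine; the only points that need a little care are the two uniformities. Uniformity in $t$ is obtained by discarding the oscillating boundary term and bounding the remaining integral by its value at $t=T$ after moving the absolute value inside. Uniformity in $\theta$ is handled not through a Sobolev-type estimate (as in Lemma \ref{lem_aux.se1}) but simply by the polynomial growth bound on $\p_{x}f$, which lets the supremum pass under the integral sign; this tacitly uses measurability of $\theta\mapsto\sup_{\theta}|\cdots|$, which is harmless given separability of $\overline{\Theta}$ and continuity of $\theta\mapsto f(x,\theta)$ (alternatively one restricts to a countable dense subset of $\overline{\Theta}$). No genuine obstacle arises: the path-regularity estimate \eqref{loc.X.ineq} carries the whole proof.
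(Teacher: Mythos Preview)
Your argument is correct and follows essentially the same route as the paper's own proof: split off the boundary term of size $h/T$, pass the supremum over $\theta$ inside via the mean-value bound on $\p_{x}f$, and control the remaining discretization error in $L^{1}$ using \eqref{loc.X.ineq}. The only cosmetic difference is that you phrase the Riemann sum through the step process $X^{(n)}$ and make the Cauchy--Schwarz step explicit to get the rate $h^{1/2}$, whereas the paper writes the same thing as a sum of $\int_{j}$-integrals and simply observes that the expectation of the upper bound tends to zero.
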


\begin{proof}
The target quantity can be bounded by
\begin{align}
& \sup_{t\le T}\frac{1}{n}\sum_{j=1}^{[t/h]}\frac{1}{h}\int_{j}\sup_{\theta}|f(X_{s},\theta)-f_{j-1}(\theta)|ds 
+ \frac{h}{T}\sup_{\theta}\sup_{t\le T}|f(X_{t},\theta)| \nn\\
&\lesssim \frac{1}{n}\sum_{j=1}^{n}\frac{1}{h}\int_{j}(1+|X_{t_{j-1}}|+|X_{s}|)^{C}|X_{s}-X_{t_{j-1}}|ds
+ \frac{h}{T}\bigg( 1+\sup_{t\le T}|X_{t}|^{C}\bigg).
\nonumber
\end{align}
By \eqref{loc.X.ineq} the expectation of the upper bound tends to zero, hence the claim.
\end{proof}

\begin{prop}
Assume that the conditions in Lemma \ref{lem_aux.se1} hold and that $\sup_{\theta}|\p_{x}\pi(x,\theta)|\lesssim 1+|x|^{C}$.
\begin{enumerate}
\item If $\beta=1$, we have
\begin{equation}
\sup_{\theta}\bigg|
\frac{1}{n}U_{n}(\theta) - \frac{1}{T}\int_{0}^{T}
\pi(X_{t},\theta)\int\eta\bigg(\frac{c(X_{t},\gam_{0})}{c(X_{t},\gam)}z + \mfb(X_{t},\theta)\bigg)\phi_{1}(z)dzdt
\bigg|=o_{p}(1).
\nonumber
\end{equation}
\item If $\beta\in(1,2)$, we have
\begin{equation}
\sup_{\theta}\bigg|
\frac{1}{n}U_{n}(\theta) - 
\frac{1}{T}\int_{0}^{T}\pi(X_{t},\theta)\eta\bigg(\frac{c(X_{t},\gam_{0})}{c(X_{t},\gam)}z\bigg)\phi_{\beta}(dz)dzdt
\bigg|=o_{p}(1).
\nonumber
\end{equation}
If further $\eta$ is odd, then
\begin{equation}
\sup_{\theta}\bigg|\frac{1}{nh^{1-1/\beta}}U_{n}(\theta)\bigg| = O_{p}(1).
\nonumber
\end{equation}
\end{enumerate}
\label{prop_mainULLN}
\end{prop}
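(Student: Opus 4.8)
The plan is to split $U_{n}(\theta)=U_{1,n}(\theta)+U_{2,n}^{0}(\theta)+U_{2,n}'(\theta)+U_{2,n}''(\theta)$ as in \eqref{U2n_deco} and to reduce everything to the principal term $U_{2,n}^{0}$. By Lemma \ref{lem_aux.se1} and \eqref{lem_aux.se1+1}, $\sup_{\theta}|n^{-1}U_{1,n}(\theta)|=O_{p}(n^{-1/2})$ and $\sup_{\theta}|(nh^{1-1/\beta})^{-1}U_{1,n}(\theta)|=o_{p}(1)$; by Lemma \ref{lem_aux.se_r1}, $\sup_{\theta}|n^{-1}U_{2,n}'(\theta)|$ and $\sup_{\theta}|n^{-1}U_{2,n}''(\theta)|$ are $O^{\ast}_{L^{q}}(h^{2-1/\beta})$, and after dividing by $nh^{1-1/\beta}$ instead they become $O^{\ast}_{L^{q}}(h)$; all of these are $o_{p}(1)$ since $2-1/\beta\ge 1>0$ for $\beta\in[1,2)$. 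Hence it remains to analyze $n^{-1}U_{2,n}^{0}(\theta)$, and for the last assertion also $(nh^{1-1/\beta})^{-1}U_{2,n}^{0}(\theta)$.

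Because $\D_{j}J\eil J_{h}$ is independent of $\mcf_{t_{j-1}}$, $h^{-1/\beta}J_{h}$ has density $f_{h}$, and $h^{1-1/\beta}\mfb_{j-1}(\theta)$ is $\mcf_{t_{j-1}}$-measurable, the inner conditional expectation in $U_{2,n}^{0}$ equals $\int\eta\big(\ell_{j-1}(\gam)y+h^{1-1/\beta}\mfb_{j-1}(\theta)\big)f_{h}(y)\,dy$, where $\ell_{j-1}(\gam):=c_{j-1}(\gam_{0})/c_{j-1}(\gam)$ satisfies, by Assumption \ref{A_coeff}, $(1+|X_{t_{j-1}}|^{C})^{-1}\lesssim\ell_{j-1}(\gam)\lesssim 1+|X_{t_{j-1}}|^{C}$ uniformly in $\gam$. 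First I would replace $f_{h}$ by $\phi_{\beta}$: from $|\eta(z)|\lesssim 1+\log(1+|z|)$, the elementary estimate $\log(1+|ay+b|)\lesssim 1+\log(1+|a|)+\log(1+|b|)+\log(1+|y|)$, the bound $h^{1-1/\beta}\le 1$, and $\log(1+|y|)\lesssim 1+|y|^{\kappa}$ for a small $\kappa\in(0,\beta)$, the replacement error is at most a constant times $\big(\int|f_{h}-\phi_{\beta}|\,dy+\int|y|^{\kappa}|f_{h}-\phi_{\beta}|\,dy\big)\cdot\frac{1}{n}\sumj(1+|X_{t_{j-1}}|^{C})$, which is $o_{p}(1)$ uniformly in $\theta$ by the $L^{1}$-local limit theorems \eqref{llt.imp.1}--\eqref{llt.imp.2} (see Section \ref{sec_localization}) together with Lemma \ref{lem.aux.LLN} applied to $x\mapsto 1+|x|^{C}$. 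When $\beta\in(1,2)$ I would then discard the shift $h^{1-1/\beta}\mfb_{j-1}(\theta)$: since $|y\,\p\eta(y)|\lesssim 1+\log(1+|y|)$ together with $\p\eta\in\mcc^{0}(\mbbr)$ force $\p\eta$ to be bounded, the mean value theorem gives an error $\lesssim h^{1-1/\beta}(1+|X_{t_{j-1}}|^{C})$, again $o_{p}(1)$ after averaging.

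After these reductions $n^{-1}U_{2,n}^{0}(\theta)$ has the same limit as $\frac{1}{n}\sumj f(X_{t_{j-1}},\theta)$, where $f(x,\theta):=\pi(x,\theta)\int\eta\big(\ell(x,\gam)y+b(x,\theta)\big)\phi_{\beta}(y)\,dy$ with $\ell(x,\gam):=c(x,\gam_{0})/c(x,\gam)$, $b=\mfb$ if $\beta=1$, and $b\equiv 0$ if $\beta\in(1,2)$. Lemma \ref{lem.aux.LLN} then delivers the asserted limit, provided $\sup_{\theta}\{|f(x,\theta)|+|\p_{x}f(x,\theta)|\}\lesssim 1+|x|^{C}$. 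Differentiating under the integral sign and invoking the growth bounds on $\pi$, $\p_{x}\pi$, $a$, $c$ and their derivatives, the only delicate point is the $\p_{x}f$ estimate when $\beta=1$, where $\phi_{1}$ has no first moment: there one uses the decay $|\p\eta(z)|\lesssim(1+\log(1+|z|))/(1+|z|)$ and the lower bound on $\ell$ to obtain $|y\,\p\eta(\ell(x,\gam)y+b(x,\theta))|\lesssim(1+|x|^{C})(1+\log(1+|y|))$, a $\phi_{1}$-integrable majorant. This settles (1) and the first display of (2).

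For the final bound, when $\eta$ is odd the symmetry of $f_{h}$ yields $\int\eta(\ell_{j-1}(\gam)y)f_{h}(y)\,dy=0$ (the integral being absolutely convergent because $|\eta(z)|\lesssim 1+|z|^{\kappa}$ and $f_{h}$ has finite $\kappa$-moment), so the same mean-value estimate as in the second paragraph gives $\sup_{\theta}|U_{2,n}^{0}(\theta)|\lesssim h^{1-1/\beta}\sumj(1+|X_{t_{j-1}}|^{C})$; hence $\sup_{\theta}|(nh^{1-1/\beta})^{-1}U_{2,n}^{0}(\theta)|=O_{p}(1)$ by Lemma \ref{lem.aux.LLN}, and combining this with the first-paragraph bounds on $U_{1,n},U_{2,n}',U_{2,n}''$ gives $\sup_{\theta}|(nh^{1-1/\beta})^{-1}U_{n}(\theta)|=O_{p}(1)$. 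I expect the main obstacle to be the uniform-in-$\theta$ polynomial-growth verification of the limit integrand $f(x,\theta)$, especially the borderline integrability in the Cauchy case $\beta=1$; all the substantive analytic input on $f_{h}$ is already supplied by Lemma \ref{key.lemma} and the facts recalled in Section \ref{sec_localization}.
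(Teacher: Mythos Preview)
Your proof is correct and follows essentially the same route as the paper: reduce to $U_{2,n}^{0}$ via Lemmas \ref{lem_aux.se1} and \ref{lem_aux.se_r1}, replace $f_{h}$ by $\phi_{\beta}$ using \eqref{llt.imp.1}, drop the shift $h^{1-1/\beta}\mfb_{j-1}$ when $\beta>1$ via the mean-value theorem and boundedness of $\p\eta$, and finish with Lemma \ref{lem.aux.LLN} after verifying polynomial growth of the limit integrand (the paper handles the $\p_{x}f$ bound at $\beta=1$ by the same trick of pairing $y$ with $\p\eta$ to exploit $|y\p\eta(y)|\lesssim 1+\log(1+|y|)$). The only cosmetic difference is that for $\beta\in(1,2)$ the paper Taylor-expands first and then replaces $f_{h}$ by $\phi_{\beta}$, whereas you do these in the opposite order; both work.
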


\begin{proof}
By Lemmas \ref{lem_aux.se1} and \ref{lem_aux.se_r1} it suffices to only look at $U_{2,n}^{0}(\theta)$ (recall \eqref{U2n_deco}); the assumptions in Lemma \ref{lem_aux.se_r1} are implied by those in Lemma \ref{lem_aux.se1}. Let
\begin{equation}
\overline{U}_{2,n}^{0}(\theta) := \frac{1}{nh^{1-1/\beta}}U_{2,n}^{0}(\theta)
=\frac{1}{n}\sumj \pi_{j-1}(\theta)\frac{1}{h^{1-1/\beta}}
\E^{j-1}\left\{ \eta\left( \del'_{j}(\gam)+h^{1-1/\beta}\mfb_{j-1}(\theta) \right) \right\}.
\nn
\end{equation}

\medskip

(1) For $\beta=1$, we can write $\overline{U}_{2,n}^{0}(\theta)$ as the sum of $n^{-1}\sumj f^{1}_{j-1}(\theta)$ and $n^{-1}\sumj f^{2}_{j-1}(\theta)$, where
\begin{align}
f^{1}(x,\theta) &:= \pi(x,\theta)\int\eta\bigg(\frac{c(x,\gam_{0})}{c(x,\gam)}z + \mfb(x,\theta)\bigg)\phi_{1}(z)dz,
\nn\\
f^{2}(x,\theta) &:= \pi(x,\theta)\int\eta\bigg(\frac{c(x,\gam_{0})}{c(x,\gam)}z + \mfb(x,\theta)\bigg)\{f_{h}(z)-\phi_{1}(z)\}dz.
\nonumber
\end{align}
Pick a $\kappa\in(0,\beta)=(0,1)$. Since $|\eta(y)| \lesssim 1 + |y|^{\kappa}$,
\begin{equation}
\sup_{\theta}\bigg| \eta\bigg(\frac{c(x,\gam_{0})}{c(x,\gam)}z + \mfb(x,\theta)\bigg) \bigg| \lesssim (1 + |x|^{C}) (1 + |z|^{\kappa}).
\nn
\end{equation}
Hence we have the bounds:
\begin{equation}
\sup_{\theta}|f^{1}(x,\theta)| \lesssim (1+|x|^{C})\int (1+|z|^{\kappa})\phi_{1}(y)dy \lesssim 1+|x|^{C}
\label{prop_mainULLN_1.25}
\end{equation}
and $|f^{2}(x,\theta)| \lesssim (1+|x|^{C})\int (1+|z|^{\kappa})|f_{h}(y)-\phi_{1}(y)|dy = (1+|x|^{C})o(1)$; in particular,
\begin{equation}
\sup_{\theta}\bigg|\frac{1}{n}\sumj f^{2}_{j-1}(\theta)\bigg|=o_{p}(1).
\label{prop_mainULLN_1.5}
\end{equation}
Under the conditions on $\eta$, simple manipulations lead to
\begin{align}
& \sup_{\theta}\bigg| \p\eta\bigg(\frac{c(x,\gam_{0})}{c(x,\gam)}z + \mfb(x,\theta)\bigg) \cdot \bigg\{ \p_{x}\bigg(\frac{c(x,\gam_{0})}{c(x,\gam)}\bigg) z + \p_{x}\mfb(x,\theta)\bigg\} \bigg| \nn\\
&\lesssim (1 + |x|^{C})\bigg\{ \|\p\eta\|_{\infty} + 
\sup_{\theta}\bigg| \bigg(\frac{c(x,\gam_{0})}{c(x,\gam)}z + \mfb(x,\theta)\bigg) \p\eta\bigg(\frac{c(x,\gam_{0})}{c(x,\gam)}z + \mfb(x,\theta)\bigg) \bigg| \bigg\} \nn\\
&\lesssim (1+|x|^{C})\bigg( 1+\int (1+|z|^{\kappa}) \phi_{1}(y)dy\bigg) \lesssim 1+|x|^{C}.
\nn
\end{align}
Consequently,
\begin{equation}
\sup_{\theta}\left|\p_{x}f^{1}(x,\theta)\right| \lesssim 1+|x|^{C}.
\label{prop_mainULLN_1.75}
\end{equation}
The claim follows on applying Lemma \ref{lem.aux.LLN} with \eqref{prop_mainULLN_1.25}, \eqref{prop_mainULLN_1.5}, and \eqref{prop_mainULLN_1.75}.

\medskip

(2) For $\beta\in(1,2)$, we have
\begin{align}
& h^{1-1/\beta}\overline{U}_{2,n}^{0}(\theta) \nn\\
&=\frac{1}{n}\sumj \pi_{j-1}(\theta)\E^{j-1}\left\{ \eta\left( \del'_{j}(\gam)+h^{1-1/\beta}\mfb_{j-1}(\theta) \right) \right\} 
\nn \\
&=\frac{1}{n}\sumj \pi_{j-1}(\theta)
\int \eta\bigg( \frac{c_{j-1}(\gam_{0})}{c_{j-1}(\gam)}z\bigg)f_{h}(z)dz
\nn\\
&{}\qquad + h^{1-1/\beta}\frac{1}{n}\sumj \pi_{j-1}(\theta)
\mfb_{j-1}(\theta)\int_{0}^{1}\E^{j-1}\left\{\p\eta\left(\del'_{j}(\gam)+sh^{1-1/\beta}\mfb_{j-1}(\theta)\right)\right\}ds.
\label{prop_mainULLN_2}
\end{align}
As with the case $\beta=1$, the first term in the rightmost side of \eqref{prop_mainULLN_2} turns out to be equal to $\frac{1}{T}\int_{0}^{T}\pi(X_{t},\theta)\eta(\frac{c(X_{t},\gam_{0})}{c(X_{t},\gam)}z)\phi_{\beta}(dz)dzdt + o_{p}(1)$ uniformly in $\theta$. Moreover, by the boundedness of $\p\eta$ and the estimate $|\pi_{j-1}(\theta)\mfb_{j-1}(\theta)| \lesssim 1+|X_{t_{j-1}}|^{C}$, the second term is $O_{p}(h^{1-1/\beta})=o_{p}(1)$ uniformly in $\theta$. Hence Lemma \ref{lem.aux.LLN} ends the proof of the first half.

Under the conditions in Lemma \ref{lem_aux.se1}, it follows from \eqref{U2n_deco} and Lemma \ref{lem_aux.se_r1} that
\begin{equation}
\sup_{\theta}\bigg|\frac{1}{nh^{1-1/\beta}}U_{n}(\theta) - \overline{U}_{2,n}^{0}(\theta)\bigg| = O_{p}(h).
\nn
\end{equation}
If $\eta$ is odd, the symmetry of the density $f_{h}$ implies that the first term in the rightmost side of \eqref{prop_mainULLN_2} a.s. equals $0$ for each $\gam$.
Then $\sup_{\theta}|\overline{U}_{2,n}^{0}(\theta)|=O_{p}(1)$, hence the latter claim.
\end{proof}

\medskip

We will also need the next corollary.

\begin{cor}
Assume that the conditions in Lemma \ref{lem_aux.se1} hold, let $\beta\in(1,2)$, and let $\eta:\mbbr\to\mbbr$ be an odd function. Then, for every $q>0$ we have
\begin{equation}
\frac{1}{nh^{1-1/\beta}} \sumj \pi_{j-1}(\theta)\E^{j-1}\left\{\eta(\ep_{j}\left(\al_{0},\gam)\right)\right\} = O^{\ast}_{L^{q}}(h),
\nonumber
\end{equation}
and also
\begin{equation}
\frac{1}{nh^{1-1/\beta}} \sumj \pi_{j-1}(\theta)\eta(\ep_{j}\left(\al_{0},\gam)\right) = O^{\ast}_{L^{q}}\left(
(\sqrt{n}h^{1-1/\beta})^{-1}
\right).
\nonumber
\end{equation}
\label{prop_mainULLN.cor}
\end{cor}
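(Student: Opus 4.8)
The plan is to specialize to $\al=\al_{0}$, where the ``leading'' term of the sum vanishes by symmetry, and then to read off both assertions from Lemmas \ref{lem_aux.se1} and \ref{lem_aux.se_r1}. First recall the decomposition $\ep_{j}(\theta)=\del'_{j}(\gam)+h^{1-1/\beta}\mfb_{j-1}(\theta)+r_{j}(\gam)$, and note $\mfb_{j-1}(\al_{0},\gam)=c^{-1}_{j-1}(\gam)\{a_{j-1}(\al_{0})-a_{j-1}(\al_{0})\}=0$, so $\ep_{j}(\al_{0},\gam)=\del'_{j}(\gam)+r_{j}(\gam)$. Expanding $\eta$ along the segment exactly as in the derivation of \eqref{U2n_deco}, one sees that $\E^{j-1}\{\eta(\ep_{j}(\al_{0},\gam))\}$ equals $\E^{j-1}\{\eta(\del'_{j}(\gam))\}$ plus $h^{-1/\beta}c^{-1}_{j-1}(\gam)$ times the sum of $\E^{j-1}(\overline{r}_{j}\int_{j}a^{\D}_{j-1}(s)ds)$ and $\E^{j-1}(\overline{r}_{j}\int_{j}c^{\D}_{j-1}(s-)dJ_{s})$, where $\overline{r}_{j}:=\int_{0}^{1}\p\eta(\del'_{j}(\gam)+sr_{j}(\gam))ds$.

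The crux is the cancellation $\E^{j-1}\{\eta(\del'_{j}(\gam))\}=0$. Indeed $\del'_{j}(\gam)=\{c_{j-1}(\gam_{0})/c_{j-1}(\gam)\}\,h^{-1/\beta}\D_{j}J$ is, conditionally on $\mcf_{t_{j-1}}$, the product of a positive $\mcf_{t_{j-1}}$-measurable factor with $h^{-1/\beta}\D_{j}J\eil h^{-1/\beta}J_{h}$, which is symmetric and independent of $\mcf_{t_{j-1}}$; hence $\del'_{j}(\gam)$ is conditionally symmetric, and since $\eta$ is odd the claimed identity holds once conditional integrability is checked. The latter is clear: under the conditions of Lemma \ref{lem_aux.se1} one has $|\eta(y)|\lesssim 1+|y|^{\kappa}$ for any $\kappa>0$, and choosing $\kappa\in(0,\beta)$ (possible since $\beta>1$), the estimate \eqref{LusPag.ineq.result} together with Assumption \ref{A_coeff} gives $\E^{j-1}|\del'_{j}(\gam)|^{\kappa}\lesssim(1+|X_{t_{j-1}}|^{C})\,\E|h^{-1/\beta}J_{h}|^{\kappa}\lesssim 1+|X_{t_{j-1}}|^{C}<\infty$ a.s. Consequently $\sumj\pi_{j-1}(\theta)\E^{j-1}\{\eta(\ep_{j}(\al_{0},\gam))\}$ reduces to the sum of $h^{-1/\beta}\sumj\pi'_{j-1}(\theta)\E^{j-1}(\overline{r}_{j}\int_{j}a^{\D}_{j-1}(s)ds)$ and $h^{-1/\beta}\sumj\pi'_{j-1}(\theta)\E^{j-1}(\overline{r}_{j}\int_{j}c^{\D}_{j-1}(s-)dJ_{s})$, with $\pi'_{j-1}(\theta)=\pi_{j-1}(\theta)c^{-1}_{j-1}(\gam)$; these are precisely $U'_{2,n}$ and $U''_{2,n}$ of \eqref{U2n_deco} with the drift coordinate frozen at $\al_{0}$. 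Since $\p\eta$ is bounded under the hypotheses of Lemma \ref{lem_aux.se1} (it is continuous, and $|y\p\eta(y)|\lesssim 1+\log(1+|y|)$ forces $\p\eta(y)\to 0$ as $|y|\to\infty$), the hypotheses of Lemma \ref{lem_aux.se_r1} are in force, and that lemma --- whose proof uses only uniform-in-$\theta$ polynomial bounds on $\pi$ and $\p_{\theta}\pi$, hence is unaffected by $\pi$ being evaluated at a free $\theta$ while $\ep_{j}$ carries $\al_{0}$ --- yields $\sumj\pi_{j-1}(\theta)\E^{j-1}\{\eta(\ep_{j}(\al_{0},\gam))\}=O^{\ast}_{L^{q}}(nh^{2-1/\beta})$ for every $q>0$. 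Dividing by $nh^{1-1/\beta}$ gives the first assertion.

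For the second assertion, split $\sumj\pi_{j-1}(\theta)\eta(\ep_{j}(\al_{0},\gam))$ into the martingale part $\sumj\pi_{j-1}(\theta)\big(\eta(\ep_{j}(\al_{0},\gam))-\E^{j-1}\{\eta(\ep_{j}(\al_{0},\gam))\}\big)$ plus the compensator already handled. By the argument of Lemma \ref{lem_aux.se1} the martingale part is $O^{\ast}_{L^{q}}(\sqrt{n})$, while the compensator is $O^{\ast}_{L^{q}}(nh^{2-1/\beta})$ by the first part; since $\sqrt{n}+nh^{2-1/\beta}\lesssim\sqrt{n}$ for $\beta\in(1,2)$, the whole sum is $O^{\ast}_{L^{q}}(\sqrt{n})$, and dividing by $nh^{1-1/\beta}$ gives $O^{\ast}_{L^{q}}((\sqrt{n}h^{1-1/\beta})^{-1})$, as claimed. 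I expect no genuine difficulty here: the only point requiring care is the symmetry cancellation $\E^{j-1}\{\eta(\del'_{j}(\gam))\}=0$ (and the integrability that licenses it), together with checking that the $\al=\al_{0}$ specialization of Lemma \ref{lem_aux.se_r1}'s proof goes through verbatim; the remaining arithmetic is a routine reduction to the two preceding lemmas.
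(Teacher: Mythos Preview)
Your proof is correct and follows essentially the same route as the paper's: both observe that $\mfb_{j-1}(\al_{0},\gam)\equiv 0$ forces the leading term $U^{0}_{2,n}$ to vanish (via oddness of $\eta$ and the conditional symmetry of $\del'_{j}(\gam)$), then read off the two assertions from Lemmas \ref{lem_aux.se1} and \ref{lem_aux.se_r1}. Your version is simply more explicit about the integrability underpinning the symmetry cancellation and about why freezing $\al=\al_{0}$ does not disturb the uniform-in-$\theta$ estimates of those lemmas.
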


\begin{proof}
We have $U^{0}_{2,n}(\theta)\equiv 0$ from the first identity in \eqref{prop_mainULLN_2} and the fact $\mfb_{j-1}(\al_{0},\gam)\equiv 0$. This combined with \eqref{U2n_deco} and Lemmas \ref{lem_aux.se1} and \ref{lem_aux.se_r1} ends the proof.
\end{proof}

\subsection{Proof of Theorem \ref{sqmle.iv_bda_thm}: consistency}\label{sec_proof.consistency}

For convenience we state the following lemma.

\begin{lem}[Consistency under possible multi-scaling]
Let $K_1\subset\mbbr^{p_1}$ and $K_2\subset\mbbr^{p_2}$ be compact sets, and let $H_{n}: K_1 \times K_2\to\mbbr$ be a random function of the form
\begin{equation}
H_{n}(u_{1},u_{2}) = k_{1,n}H_{1,n}(u_{1}) + k_{2,n}H_{2,n}(u_{1},u_{2})
\nonumber
\end{equation}
for some positive non-random sequences $(k_{1,n})$ and $(k_{2,n})$ and some continuous random functions $H_{1,n}: K_1 \to \mbbr$ and $H_{2,n}: K_1\times K_2 \to \mbbr$. Let $(u_{1,0}, u_{2,0})\in K_1^{\circ} \times K_2^{\circ}$ be a non-random vector. Assume the following conditions:
\begin{itemize}
\item $k_{2,n}=o(k_{1,n})$;
\item $\sup_{u_{1}}|H_{1,n}(u_{1})-H_{1,0}(u_{1})|\cip 0$ and $\sup_{(u_{1},u_{2})}|H_{2,n}(u_{1},u_{2})-H_{2,0}(u_{1},u_{2})|\cip 0$ for some continuous random functions $H_{1,0}$ and $H_{2,0}$;
\item $\{u_{1,0}\}=\argmax H_{1,0}$ and $\{u_{2,0}\}=\argmax H_{2,0}(u_{1,0},\cdot)$ a.s.
\end{itemize}
Then, for any $(\hat{u}_{1,n},\hat{u}_{2,n}) \in K_1\times K_2$ such that $H_{n}(\hat{u}_{1,n},\hat{u}_{2,n}) \ge \sup H_{n} - o_{p}(k_{2,n})$, we have $(\hat{u}_{1,n},\hat{u}_{2,n})\cip(u_{1,0},u_{2,0})$.
\label{lem.2step.consistency}
\end{lem}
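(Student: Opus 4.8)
The plan is to run the classical argmax/consistency argument in two stages, using the scale separation $k_{2,n}=o(k_{1,n})$ to peel off the first coordinate before dealing with the second. First I would establish the consistency of $\hat u_{1,n}$. Fix any open neighbourhood $G_{1}$ of $u_{1,0}$ in $K_{1}$. On the complement $K_{1}\setminus G_{1}$, which is compact, the (random) continuous function $H_{1,0}$ attains its supremum, and by the identifiability hypothesis $\{u_{1,0}\}=\argmax H_{1,0}$ this supremum is strictly less than $H_{1,0}(u_{1,0})$; call the a.s.\ positive gap $2\delta=\delta(\omega)>0$. By the uniform convergence $\sup_{u_{1}}|H_{1,n}(u_{1})-H_{1,0}(u_{1})|\cip 0$, with probability tending to one we have $\sup_{u_{1}\notin G_{1}}H_{1,n}(u_{1}) < H_{1,0}(u_{1,0})-\delta < H_{1,n}(u_{1,0})$. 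Next I would show this separation survives multiplication by $k_{1,n}$ and the addition of the $k_{2,n}$-term: since $H_{2,n}$ is uniformly close to the continuous $H_{2,0}$ and $K_{1}\times K_{2}$ is compact, $\sup_{(u_{1},u_{2})}|H_{2,n}(u_{1},u_{2})|=O_{p}(1)$, so $k_{2,n}\sup|H_{2,n}|=o_{p}(k_{1,n})$, and likewise the near-optimality slack $o_{p}(k_{2,n})=o_{p}(k_{1,n})$. Dividing $H_{n}$ by $k_{1,n}$, on the event above the value at $(u_{1,0},u_{2,0})$ exceeds $\sup_{u_{1}\notin G_{1},u_{2}}H_{n}/k_{1,n}$ by at least $\delta/2$ with probability going to one; hence any near-maximiser must have $\hat u_{1,n}\in G_{1}$ eventually, giving $\hat u_{1,n}\cip u_{1,0}$.

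Second, I would pin down $\hat u_{2,n}$. Having $\hat u_{1,n}\cip u_{1,0}$ in hand, I work on the second scale: divide $H_{n}$ by $k_{2,n}$. The term $(k_{1,n}/k_{2,n})H_{1,n}(\hat u_{1,n})$ does not depend on $u_{2}$, so it cancels when comparing $H_{n}(\hat u_{1,n},\cdot)$ at two values of $u_{2}$; thus for the purpose of locating $\hat u_{2,n}$ it suffices to analyse $u_{2}\mapsto H_{2,n}(\hat u_{1,n},u_{2})$, up to an $o_{p}(1)$ slack (the near-optimality gap $o_{p}(k_{2,n})$ becomes $o_{p}(1)$ after dividing). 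By the joint uniform convergence of $H_{2,n}$ to the continuous $H_{2,0}$ together with $\hat u_{1,n}\cip u_{1,0}$ and continuity of $H_{2,0}$ in its first argument, $\sup_{u_{2}}|H_{2,n}(\hat u_{1,n},u_{2})-H_{2,0}(u_{1,0},u_{2})|\cip 0$. Now repeat the compact-set separation argument as in the first stage, this time using the identifiability condition $\{u_{2,0}\}=\argmax H_{2,0}(u_{1,0},\cdot)$: for any neighbourhood $G_{2}$ of $u_{2,0}$, with probability tending to one $H_{2,n}(\hat u_{1,n},u_{2,0})$ strictly dominates $\sup_{u_{2}\notin G_{2}}H_{2,n}(\hat u_{1,n},u_{2})$ by a fixed margin larger than the $o_{p}(1)$ slack, forcing $\hat u_{2,n}\in G_{2}$ eventually. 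This yields $\hat u_{2,n}\cip u_{2,0}$, and combining the two gives $(\hat u_{1,n},\hat u_{2,n})\cip(u_{1,0},u_{2,0})$.

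The only genuinely delicate point is the bookkeeping with the random, $\omega$-dependent separation gap $\delta(\omega)$ in each stage: the argmax sets and the continuous limits $H_{1,0},H_{2,0}$ are random, so the "gap is positive" statement holds only almost surely and with a gap that may be arbitrarily small on sets of small probability. The standard remedy, which I would use, is to fix $\varepsilon>0$, choose on an event of probability $\ge 1-\varepsilon$ a deterministic lower bound $\delta_{\varepsilon}>0$ for the gap (possible because $\delta>0$ a.s.\ implies $\pr(\delta\ge\delta_{\varepsilon})\ge 1-\varepsilon$ for $\delta_{\varepsilon}$ small), run the uniform-convergence estimates inside that event, and then let $\varepsilon\downarrow 0$. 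A secondary technical wrinkle is transferring $\sup_{u_{2}}|H_{2,n}(\hat u_{1,n},u_{2})-H_{2,0}(u_{1,0},u_{2})|\cip 0$ from the joint uniform convergence: this needs $u_{1}\mapsto H_{2,0}(u_{1},\cdot)$ to be (uniformly) continuous in a neighbourhood of $u_{1,0}$, which follows from joint continuity of $H_{2,0}$ on the compact $K_{1}\times K_{2}$, and then one splits $|H_{2,n}(\hat u_{1,n},u_{2})-H_{2,0}(u_{1,0},u_{2})|\le \sup_{(v_{1},v_{2})}|H_{2,n}(v_{1},v_{2})-H_{2,0}(v_{1},v_{2})| + |H_{2,0}(\hat u_{1,n},u_{2})-H_{2,0}(u_{1,0},u_{2})|$ and controls the second piece using $\hat u_{1,n}\cip u_{1,0}$ and the modulus of continuity of $H_{2,0}$. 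Everything else is the routine argmax-theorem mechanics.
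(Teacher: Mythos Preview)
Your proposal is correct and follows essentially the same two-stage argmax route as the paper: first isolate $\hat u_{1,n}$ by dividing through by $k_{1,n}$ so that the $H_{2,n}$ contribution becomes $o_{p}(1)$ uniformly, then work on $u_{2}\mapsto H_{2,n}(\hat u_{1,n},u_{2})$ at the second scale. The paper simply cites the argmax theorem from \cite{vdV98} applied to $u_{1}\mapsto k_{1,n}^{-1}H_{n}(u_{1},\hat u_{2,n})$ and then to $u_{2}\mapsto H_{2,n}(\hat u_{1,n},u_{2})-H_{2,n}(\hat u_{1,n},u_{2,0})$, whereas you spell out the compact-set separation argument and the handling of the random gap $\delta(\omega)$ explicitly; but the underlying idea is identical.
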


%

Lemma \ref{lem.2step.consistency} easily follows on applying the argmax theorem (for example \cite{vdV98}) twice for the random functions
\begin{align}
u_{1} &\mapsto k_{1,n}^{-1}H_{n}(u_{1},\hat{u}_{2,n})=H_{1,n}(u_{1}) + k_{2,n}k_{1,n}^{-1}H_{2,n}(u_{1},\hat{u}_{2,n}), \nn\\
u_{2} &\mapsto k_{2,n}^{-1}\{H_{n}(\hat{u}_{1,n},u_{2}) - H_{n}(\hat{u}_{1,n},u_{2,0})\}=H_{2,n}(\hat{u}_{1,n},u_{2})-H_{2,n}(\hat{u}_{1,n},u_{2,0}),
\nonumber
\end{align}
in this order.

\medskip

Returning to our model, we make a few remarks on Assumption \ref{A_iden}.
Recall the notation $\mfb(x,\theta) =c^{-1}(x,\gam)\{a(x,\al_{0})-a(x,\al)\}$ and $g_{\beta}(y)=\p_{y}\log\phi_{\beta}(y)$. For $\beta>1$, we define the random functions $\mbby_{\beta,1}(\cdot)=\mbby_{\beta,1}(\cdot;\gam_{0}): \Theta_{\gam}\to\mbbr$ and $\mbby_{\beta,2}(\cdot)=\mbby_{\beta,2}(\cdot;\tz): \Theta\to\mbbr$ by
\begin{align}
\mbby_{\beta,1}(\gam) &= \frac{1}{T}\int_{0}^{T}\int\bigg[ 
\log\bigg\{\frac{c(X_{t},\gam_{0})}{c(X_{t},\gam)}\phi_{\beta}\bigg(\frac{c(X_{t},\gam_{0})}{c(X_{t},\gam)}z\bigg)\bigg\}
-\log\phi_{\beta}(z)\bigg]\phi_{\beta}(z)dzdt,
\label{def_Y0b.a} \\
\mbby_{\beta,2}(\theta) &= \frac{1}{2T}\int_{0}^{T}\mfb^{2}(X_{t},\theta)
\int\p g_{\beta}\bigg(\frac{c(X_{t},\gam_{0})}{c(X_{t},\gam)}z\bigg)\phi_{\beta}(z)dzdt.
\nn
\end{align}
We also define $\mbby_{1}'(\cdot)=\mbby_{1}'(\cdot;\tz): \Theta\to\mbbr$ by
\begin{align}
\mbby_{1}'(\theta) &= \frac{1}{T}\int_{0}^{T}\int\bigg[ 
\log\bigg\{\frac{c(X_{t},\gam_{0})}{c(X_{t},\gam)}
\phi_{1}\bigg(\frac{c(X_{t},\gam_{0})}{c(X_{t},\gam)}z + \mfb(X_{t},\theta)
\bigg)\bigg\} -\log\phi_{1}(z)\bigg]\phi_{1}(z)dzdt.
\nn
\end{align}
These three functions are continuous in $\theta$. Since the function $z\mapsto c_{1}\phi_{\beta}(c_{1}z+c_{2})$ defines a probability density for every constants $c_{1}>0$ and $c_{2}\in\mbbr$, Jensen's inequality (applied $\omega$-wise) imply that the $dt$-integrand in \eqref{def_Y0b.a} is non-positive. The equality $\mbby_{\beta,1}(\gam)=0$ holds only when the $dt$-integrand is zero for $t\in[0,T]$ a.s., hence $\{\gam_{0}\}=\argmax\mbby_{\beta,1}$ a.s. Similarly, $\{\tz\}=\argmax\mbby_{1}'$ a.s.
Moreover,
\begin{align}
\mbby_{\beta,2}(\al,\gam_{0})
&=\frac{1}{2T}\int_{0}^{T}\mfb^{2}(X_{t},(\al,\gam_{0}))\int\p g_{\beta}(z)\phi_{\beta}(z)dzdt \nn\\
&= - \frac{1}{2}\int\frac{\{\p\phi_{\beta}(z)\}^{2}}{\phi_{\beta}(z)}dz\cdot\frac{1}{T}\int_{0}^{T}c^{-2}(X_{t},\gam_{0})\{a(X_{t},\al_{0})-a(X_{t},\al)\}^{2}dt \le 0,
\nonumber
\end{align}
where the maximum $0$ is attained if and only if $\al=\al_{0}$.

\subsubsection{Case of $\beta=1$}

Let
\begin{equation}
\mbby_{1,n}'(\theta):=\frac{1}{n}\big(\sqllf_{n}(\theta)-\sqllf_{n}(\theta_{0})\big)
=\frac{1}{n}\sumj \bigg( \log\frac{c_{j-1}(\gam_{0})}{c_{j-1}(\gam)} +\log\phi_{1}(\ep_{j}(\theta)) - \log\phi_{1}(\ep_{j}(\tz))\bigg).
\nn
\end{equation}
Since $\{\tz\}=\argmax\mbby_{1}'$ a.s., by means of Lemma \ref{lem.2step.consistency} the consistency of $\tes\,(\in\argmax\mbby_{1,n}')$ is ensured by the uniform convergence $\sup_{\theta}|\mbby_{1,n}'(\theta)-\mbby_{1}'(\theta)| \cip 0$. This follows from Lemma \ref{lem.aux.LLN} and Proposition \ref{prop_mainULLN}(1) with $\pi(x,\theta)\equiv 1$ and $\eta=\log\phi_{1}$.

\subsubsection{Case of $\beta\in(1,2)$}

We have
\begin{equation}
\sqllf_n(\theta)-\sqllf_n(\tz)=k_{n}\mbby_{\beta,1,n}(\gam) + l_{n}\mbby_{\beta,2,n}(\al,\gam),
\nonumber
\end{equation}
where $k_{n}:=n$, $l_{n}:=nh^{2(1-1/\beta)}$, and
\begin{align}
\mbby_{\beta,1,n}(\gam) &:=\frac{1}{n}\{\sqllf_n(\al_0,\gam)-\sqllf_n(\al_0,\gam_0)\}, \nn\\
\mbby_{\beta,2,n}(\al,\gam) &:=\frac{1}{nh^{2(1-1/\beta)}}\{\sqllf_n(\al,\gam)-\sqllf_n(\al_0,\gam)\}.
\nonumber
\end{align}
By Lemma \ref{lem.2step.consistency}, it suffices to prove the uniform convergences:
\begin{align}
& \sup_{\gam}\left| \mbby_{\beta,1,n}(\gam) -\mbby_{\beta,1}(\gam)\right|\cip 0,
\label{iv.c.1*} \\
& \sup_{\theta}\left|\mbby_{\beta,2,n}(\theta)-\mbby_{\beta,2}(\theta)\right|\cip 0.
\label{iv.c.2*}
\end{align}
The proof of \eqref{iv.c.1*} is much the same as in the case of $\beta=1$, hence we only prove \eqref{iv.c.2*}. Observe that
\begin{align}
\mbby_{\beta,2,n}(\theta)
&= \frac{1}{nh^{2(1-1/\beta)}}\sumj\bigg( \log\phi_{\beta}(\ep_{j}(\theta)) - \log\phi_{\beta}(\ep_{j}(\al_{0},\gam)) \bigg) \nn\\
&= \frac{1}{nh^{1-1/\beta}}\sumj \mfb_{j-1}(\theta) g_{\beta}(\ep_{j}(\al_{0},\gam)) 
+\frac{1}{2n}\sumj \mfb_{j-1}^{2}(\theta) \p g_{\beta}(\ep_{j}(\al_{0},\gam)) \nn\\
&{}\qquad +\frac{1}{2n}\sumj \mfb_{j-1}^{2}(\theta)
\left\{ \p g_{\beta}(\tilde{\ep}_{j}(\theta)) - \p g_{\beta}(\ep_{j}(\al_{0},\gam)) \right\} \nn\\
&=: \mbby_{\beta,2,n}'(\theta) + \mbby_{\beta,2,n}^{0}(\theta) + \mbby_{\beta,2,n}''(\theta),
\nn
\end{align}
where $\tilde{\ep}_{j}(\theta)$ is a random point on the segment connecting $\ep_{j}(\theta)$ and $\ep_{j}(\al_{0},\gam)$.
Since $g_{\beta}$ is odd, by means of \eqref{lem_aux.se1+1} and Corollary \ref{prop_mainULLN.cor} we have $\sup_{\theta}|\mbby_{\beta,2,n}'(\theta)|=o_{p}(1)$.
We also get $\sup_{\theta}|\mbby_{\beta,2,n}''(\theta)|=o_{p}(1)$, by noting that
$\sup_{\theta}|\tilde{\ep}_{j}(\theta)-\ep_{j}(\al_{0},\gam)| \le \sup_{\theta}|\ep_{j}(\theta) - \ep_{j}(\al_{0},\gam)|\lesssim (1+|X_{t_{j-1}}|^{C})h^{1-1/\beta}=(1+|X_{t_{j-1}}|^{C})\cdot o(1)$.
It remains to look at $\mbby_{\beta,2,n}^{0}$. The function $g_{\beta}$ is bounded and smooth, and satisfies that
\begin{equation}
\sup_{y}|y|^{k+1}\left|\p^{k}g_{\beta}(y)\right|<\infty
\label{case.b>1_eq1}
\end{equation}
for each non-negative integer $k$. The convergence $\sup_{\theta}|\mbby_{\beta,2,n}^{0}(\theta) - \mbby_{\beta,2}(\theta)|=o_{p}(1)$ now follows on applying Proposition \ref{prop_mainULLN}(2) for $\pi(x,\theta)=\frac{1}{2}\mfb^{2}(x,\theta)$ and $\eta=\p g_{\beta}$ with the trivial modification that inside the function $\eta$ we have ``$\ep_{j}(\al_{0},\gam)$'' instead of ``$\ep_{j}(\theta)$''.

\subsection{Proof of Theorem \ref{sqmle.iv_bda_thm}: asymptotic mixed normality}\label{sec_proof.amn}

We introduce the rate matrix
\begin{equation}
D_{n}=\diag(D_{n,1},\dots,D_{n,p}):=\diag\left( \sqrt{n}h^{1-1/\beta}I_{p_{\al}},\,\sqrt{n}I_{p_{\gam}} \right) \in\mbbr^{p}\otimes\mbbr^{p}.,
\nonumber
\end{equation}
and then denote the normalized SQMLE by
\begin{equation}
\hat{u}_{n}=\left( \sqrt{n}h^{1-1/\beta}(\aes -\al_{0}),\,\sqrt{n}(\ges-\gam_{0})\right) := D_{n}(\tes-\tz).
\nonumber
\end{equation}
The consistency allows us to focus on the event $\{\tes\in\Theta\}$, on which we have $\p_{\theta}\sqllf_{n}(\tes)=0$ so that the two-term Taylor expansion gives
\begin{equation}
\left( -D_{n}^{-1}\p_{\theta}^{2}\sqllf_{n}(\tz)D_{n}^{-1} + \hat{r}_{n} \right)\hat{u}_{n}=D_{n}^{-1}\p_{\theta}\sqllf_{n}(\tz),
\label{amn.proof-5}
\end{equation}
where $\hat{r}_{n}=\{\hat{r}_{n}^{kl}\}_{k,l}$ is a bilinear form such that 
\begin{equation}
|\hat{r}_{n}| \lesssim \sum_{k,l,m=1}^{p}\bigg(D_{n,k}^{-1}D_{n,l}^{-1}\sup_{\theta}
\left|\p_{\theta_{k}}\p_{\theta_{l}}\p_{\theta_{m}}\sqllf_{n}(\theta)\right|\bigg) \big|\hat{\theta}_{n,m}-\theta_{0,m}\big|.
\nn
\end{equation}
Here we wrote $\theta=(\theta_{i})_{i=1}^{p}$, and similarly for $\tz$ and $\tes$.
Let
\begin{equation}
\D_{n,T} :=D_{n}^{-1}\p_{\theta}\sqllf_{n}(\tz), \qquad \Gam_{n,T} := -D_{n}^{-1}\p_{\theta}^{2}\sqllf_{n}(\tz)D_{n}^{-1}.
\nonumber
\end{equation}
If we have
\begin{align}
& \left( \D_{n,T},\, \Gam_{n,T}\right) \cil \left(\D_{T},\, \Gam_{T}(\tz;\beta)\right)
\quad\text{where}\quad \D_{T}\sim MN_{p}\left(0,\, \Gam_{T}(\tz;\beta) \right),
\label{amn.proof-DGjoint} \\
& D_{n,k}^{-1}D_{n,l}^{-1}\sup_{\theta}
\left|\p_{\theta_{k}}\p_{\theta_{l}}\p_{\theta_{m}}\sqllf_{n}(\theta)\right| = O_{p}(1),\qquad k,l,m\in\{1,\dots,p\},
\label{amn.proof-4}
\end{align}
then $\hat{r}_{n}=o_{p}(1)$ and
\begin{align}
\hat{u}_{n} &= \bigg( \Gam_{T}(\tz;\beta) + o_{p}(1) \bigg)^{-1}\D_{n,T} \nn\\
&= \Gam_{T}^{-1}(\tz;\beta) \D_{n,T} + o_{p}(1) \nn\\
&\cil \Gam_{T}^{-1}(\tz;\beta) \D_{T} \sim MN_{p}\left( 0,\, \Gam_{T}^{-1}(\tz;\beta) \right),
\nn
\end{align}
completing the proof.
Since $\Gam_{T}(\tz;\beta)$ may be random, the appropriate mode of convergence to deduce \eqref{amn.proof-DGjoint} is the stable convergence in law: 
recall we say that $\D_{n,T}$ convergences stably in law to $\D_{T}$ if $(\D_{n,T},G_{n}) \cil (\D_{T},G)$ for every $\mcf$-measurable random variables $G_{n}$ and $G$ such that $G_{n}\cip G$;
we refer to \cite{GenJac93}, \cite{Jac97}, \cite{Jac12}, \cite{JacPro12}, \cite[Chapters VIII.5c and IX.7]{JacShi03} for detailed accounts of the stable convergence in law which can handle statistics for high-frequency data.
It therefore suffices to prove \eqref{amn.proof-4} and
\begin{align}
\D_{n,T} & \scl \D_{T} \sim MN_{p}\big( 0,\, \Gam_{T}(\tz;\beta) \big),
\label{amn.proof-2} \\
\Gam_{n,T} & \cip \Gam_{T}(\tz;\beta).
\label{amn.proof-3}
\end{align}

\subsubsection{Proof of \eqref{amn.proof-4}}

We may and do suppose that $p_{\al}=p_{\gam}=1$. Write $R(x,\theta)$ for generic matrix-valued function on $\mbbr\times\Theta$ such that $\sup_{\theta}|R(x,\theta)| \lesssim 1 + |x|^{C}$.
By straightforward computations,
\begin{align}
\frac{1}{nh^{2(1-1/\beta)}}\p_{\al}^{3}\sqllf_{n}(\theta) 
&=\frac{1}{nh^{1-1/\beta}}\sumj R_{j-1}(\theta)g_{\beta}(\ep_{j}(\theta)) \nn\\
&{}\qquad +\frac{1}{n}\sumj \bigg( R_{j-1}(\theta)\p g_{\beta}(\ep_{j}(\theta)) + h^{1-1/\beta}R_{j-1}(\theta)\p^{2}g_{\beta}(\ep_{j}(\theta)) \bigg),
\nn\\
\frac{1}{nh^{2(1-1/\beta)}}\p_{\al}^{2}\p_{\gam}\sqllf_{n}(\theta) 
&=\frac{1}{nh^{1-1/\beta}}\sumj \bigg( R_{j-1}(\theta)g_{\beta}(\ep_{j}(\theta)) + R_{j-1}(\theta)\ep_{j}(\theta)\p g_{\beta}(\ep_{j}(\theta)) \bigg) \nn\\
&{}\qquad +\frac{1}{n}\sumj \bigg( R_{j-1}(\theta)\ep_{j}(\theta)\p^{2}g_{\beta}(\ep_{j}(\theta)) + R_{j-1}(\theta)\p g_{\beta}(\ep_{j}(\theta)) \bigg), \nn\\
\frac{1}{n}\p_{\gam}^{3}\sqllf_{n}(\theta) 
&= \frac{1}{n}\sumj \bigg( R_{j-1}(\theta) + R_{j-1}(\theta)\ep_{j}(\theta)g_{\beta}(\ep_{j}(\theta)) \nn\\
&{}\qquad + R_{j-1}(\theta)\ep_{j}^{2}(\theta)\p g_{\beta}(\ep_{j}(\theta)) + R_{j-1}(\theta)\ep_{j}^{3}(\theta)\p^{2}g_{\beta}(\ep_{j}(\theta)) \bigg),
\nn\\
\frac{1}{nh^{1-1/\beta}}\p_{\al}\p_{\gam}^{2}\sqllf_{n}(\theta) 
&=\frac{1}{n}\sumj \bigg( R_{j-1}(\theta)g_{\beta}(\ep_{j}(\theta))+ R_{j-1}(\theta)\ep_{j}(\theta)\p g_{\beta}(\ep_{j}(\theta)) \nn\\
&{}\qquad + R_{j-1}(\theta)\ep_{j}^{2}(\theta)\p^{2}g_{\beta}(\ep_{j}(\theta)) \bigg).
\nonumber
\end{align}
By \eqref{case.b>1_eq1}, all the terms having the factor ``$1/n$'' in front of the summation sign in the above right-hand sides are $O_{p}(1)$ uniformly in $\theta$. Since the functions $y\mapsto g_{\beta}(y)$ and $y\mapsto y\p g_{\beta}(y)$ are odd, it follows from Proposition \ref{prop_mainULLN}(2) that both
\begin{align}
& \frac{1}{nh^{1-1/\beta}}\sumj R_{j-1}(\theta)g_{\beta}(\ep_{j}(\theta)) = O_{p}(1), \nn\\
& \frac{1}{nh^{1-1/\beta}}\sumj R_{j-1}(\theta)\ep_{j}(\theta)\p g_{\beta}(\ep_{j}(\theta)) = O_{p}(1),
\nonumber
\end{align}
hold uniformly in $\theta$. These observations are enough to conclude \eqref{amn.proof-4}.

\subsubsection{Proof of \eqref{amn.proof-2}}\label{sec_score.scle.proof}

Let $\ep_{j}:=\ep_{j}(\tz)$ and observe that
\begin{align}
\D_{n,T} &= \bigg( \frac{1}{\sqrt{n}h^{1-1/\beta}}\p_{\al}\sqllf_{n}(\tz),\, 
\frac{1}{\sqrt{n}}\p_{\gam}\sqllf_{n}(\tz) \bigg) \nn\\
&= \bigg( -\frac{1}{\sqrt{n}}\sumj\frac{\p_{\al}a_{j-1}(\al_{0})}{c_{j-1}(\gam_{0})}g_{\beta}(\ep_{j}),\, 
-\frac{1}{\sqrt{n}}\sumj
\frac{\p_{\gam}c_{j-1}(\gam_{0})}{c_{j-1}(\gam_{0})}\left\{1+\ep_{j}g_{\beta}(\ep_{j})\right\}
\bigg).
\nonumber
\end{align}
To apply Jacod's stable central limit theorem, we introduce the partial sum process in $\mbbd([0,T];\mbbr^{p})$,
where $\mbbd([0,T];\mbbr^{p})$ denote the space of {\cadlag} processes over $[0,T]$ taking values in $\mbbr^{p}$:
\begin{equation}
\D_{n,t} := \bigg( -\frac{1}{\sqrt{n}}\sum_{j=1}^{[t/h]}
\frac{\p_{\al}a_{j-1}(\al_{0})}{c_{j-1}(\gam_{0})}g_{\beta}(\ep_{j}),\, 
-\frac{1}{\sqrt{n}}\sum_{j=1}^{[t/h]}
\frac{\p_{\gam}c_{j-1}(\gam_{0})}{c_{j-1}(\gam_{0})}\left\{1+\ep_{j}g_{\beta}(\ep_{j})\right\}
\bigg),\quad t\in[0,T].
\nonumber
\end{equation}
Let
\begin{align}
\pi_{j-1}=\pi_{j-1}(\tz) &:= \diag\bigg( -\frac{\p_{\al}a_{j-1}(\al_{0})}{c_{j-1}(\gam_{0})},
\,  -\frac{\p_{\gam}c_{j-1}(\gam_{0})}{c_{j-1}(\gam_{0})} \bigg)
\in\mbbr^{p}\otimes\mbbr^{2}, \label{6.4.2_pi.def}\\
\eta(y) &:= \left(g_{\beta}(y),\, 1+yg_{\beta}(y) \right) = \left(g_{\beta}(y),\, k_{\beta}(y) \right)\in\mbbr^{2}
\quad\text{(bounded)},
\label{6.4.2_eta.def}
\end{align}
so that $\D_{n,t}=n^{-1/2}\sumj \pi_{j-1}\eta(\ep_{j})$. Write $\Gam_{t}(\tz;\beta)$ for $\Gam_{T}(\tz;\beta)$ with the integral signs ``$\int_{0}^{T}$'' in their definitions replaced by ``$\int_{0}^{t}$''. Then, by means of \cite[Theorem 3-2]{Jac97} (or \cite[Theorem IX.7.28]{JacShi03}), the stable convergence \eqref{amn.proof-2} is implied by the following conditions: for each $t\in[0,T]$ and for any bounded $(\mcf_{t})$-martingale $M$,
\begin{align}
& \sum_{j=1}^{[t/h]}\E^{j-1}\bigg(\bigg|\frac{1}{\sqrt{n}}\pi_{j-1}\eta(\ep_{j})\bigg|^{4}\bigg)
\cip 0, \label{sclt.proof-1} \\
& \frac{1}{n}\sum_{j=1}^{[t/h]}
\pi_{j-1} \cdot \E^{j-1}\Bigl\{ \Bigl(\eta(\ep_{j}) - \E^{j-1}\{\eta(\ep_{j})\}\Bigr)^{\otimes 2}\Bigr\} \pi_{j-1} \cip \Gam_{t}(\tz;\beta),
\label{sclt.proof-2} \\
& \sup_{t\in[0,T]}\bigg|\frac{1}{\sqrt{n}}\sum_{j=1}^{[t/h]}\pi_{j-1}\E^{j-1}\{\eta(\ep_{j})\} \bigg| \cip 0,
\label{sclt.proof-3} \\
& \sum_{j=1}^{[t/h]}\E^{j-1}\bigg(\frac{1}{\sqrt{n}}\pi_{j-1}\eta(\ep_{j})\D_{j}M\bigg)\cip 0.
\label{sclt.proof-4}
\end{align}

\medskip

The Lyapunov condition \eqref{sclt.proof-1} trivially holds since $\eta$ is bounded and $|\pi_{j-1}| \lesssim 1+|X_{t_{j-1}}|^{C}$. For \eqref{sclt.proof-2}, arguing as in the proof of Lemma \ref{lem_aux.se_r1} with $\int\eta(z)\phi_{\beta}(z)dz=0$ and noting that
\begin{align}
& \bigg|\int\eta(z)\{f_{h}(z)-\phi_{\beta}(z)\}dz\bigg| \le \|\eta\|_{\infty}\int |f_{h}(z)-\phi_{\beta}(z)|dz=o(n^{-1/2}), \nn\\
& \int g_{\beta}(y)k_{\beta}(y)\phi_{\beta}(y)dy=\int \{g_{\beta}(y)+yg_{\beta}^{2}(y)\}\phi_{\beta}(y)dy=0,
\nonumber
\end{align}
we obtain for each $q>0$
\begin{align}
\E^{j-1}\{\eta(\ep_{j})\} &= \int\eta(z)f_{h}(z)dz + O^{\ast}_{L^{q}}(h^{2-1/\beta}) \label{sclt.proof-5} \\
&=\int\eta(z)\phi_{\beta}(z)dz + O^{\ast}_{L^{q}}(n^{-1/2}) = O^{\ast}_{L^{q}}(n^{-1/2}), \nn\\
\E^{j-1}\left\{\eta^{\otimes 2}(\ep_{j}) \right\} &= \int\eta^{\otimes 2}(z)f_{h}(z)dz + O^{\ast}_{L^{q}}(h^{2-1/\beta}) \nn\\
&=\int\eta^{\otimes 2}(z)\phi_{\beta}(z)dz + O^{\ast}_{L^{q}}(n^{-1/2})=
\begin{pmatrix}
C_{\al}(\beta) & 0 \\
0 & C_{\gam}(\beta)
\end{pmatrix}
+ O^{\ast}_{L^{q}}(n^{-1/2}).
\nonumber
\end{align}
Then the left-hand side of \eqref{sclt.proof-2} equals 
\begin{align}
& \frac{1}{n}\sum_{j=1}^{[t/h]}\pi_{j-1} \cdot \left(\int\eta^{\otimes 2}(z)\phi_{\beta}(z)dz\right) \pi_{j-1} + O_{p}(n^{-1/2}) \nn\\
&=\frac{1}{n}\sum_{j=1}^{[t/h]}\pi_{j-1} \cdot 
\begin{pmatrix}
C_{\al}(\beta) & 0 \\
0 & C_{\gam}(\beta)
\end{pmatrix}
\pi_{j-1} + O_{p}(n^{-1/2}).
\nonumber
\end{align}
By Lemma \ref{lem.aux.LLN} the first term in the right-hand side converges in probability to $\Gam_{t}(\tz;\beta)$, hence \eqref{sclt.proof-2} is verified.

\medskip

The convergence \eqref{sclt.proof-3} follows on applying \eqref{sclt.proof-5} and Lemma \ref{lem.aux.LLN}:
\begin{align}
\frac{1}{\sqrt{n}}\sum_{j=1}^{[t/h]}\pi_{j-1}\E^{j-1}\{\eta(\ep_{j})\}
&=\frac{1}{n}\sum_{j=1}^{[t/h]}\pi_{j-1}\bigg(\sqrt{n}\int\eta(z)f_{h}(z)dz\bigg) + O_{p}(\sqrt{n}h^{2-1/\beta})
\nn\\
&=o_{p}(1) + O_{p}(h^{3/2-1/\beta}) = o_{p}(1),
\nonumber
\end{align}
all the order symbols above being uniformly valid in $t\in[0,T]$.

\medskip

Finally we turn to \eqref{sclt.proof-4}. By means of the decomposition theorem for local martingales (see \cite[Theorem I.4.18]{JacShi03}), we may write $M=M^{c}+M^{d}$ for the continuous part $M^{c}$ and the associated purely discontinuous part $M^{d}$. Our underlying probability space supports no Wiener process, so that in view of the martingale representation theorem \cite[Theorem III.4.34]{JacShi03} for $M$, we may set $M^{c}=0$; recall \eqref{Ft_def}. To show \eqref{sclt.proof-4} we will follow an analogous way to \cite{TodTau12} with successive use of general theory of martingales convergence.

It suffices to prove the claim when both $\pi$ and $\eta$ are real-valued. The jumps of $M$ over $[0,T]$ are bounded, and we have $M^{n}_{t}:=\sum_{j=1}^{[t/h]}\D_{j}M \asc M_{t}=M^{d}_{t}$ in $\mbbd([0,T];\mbbr)$. Let
\begin{equation}
N^{n}_{t} := \sum_{j=1}^{[t/h]} \frac{1}{\sqrt{n}}\pi_{j-1} \tilde{\eta}(\ep_{j}),
\nonumber
\end{equation}
with $\tilde{\eta}(\ep_{j}):=\eta(\ep_{j})-\E^{j-1}\{\eta(\ep_{j})\}$.
For each $n$, $N^{n}$ is a local martingale with respect to $(\mcf_{t})$, and \eqref{sclt.proof-4} equals that $\la M^{n},N^{n}\ra_{t} \to 0$ for each $t\le T$. The angle-bracket process
\begin{equation}
\la N^{n}\ra_{t} = \frac{1}{n}\sum_{j=1}^{[t/h]}\pi^{2}_{j-1}\E^{j-1}\{ \tilde{\eta}^{2}(\ep_{j})\}
\nonumber
\end{equation}
is $C$-tight, that is, it is tight in $\mbbd([0,T];\mbbr)$ and any weak limit process has a.s. continuous sample paths; this can be deduced as in the proof of \eqref{sclt.proof-2}. Hence, by \cite[Theorem VI.4.13]{JacShi03} the sequence $(N^{n})$ is tight in $\mbbd([0,T];\mbbr)$. Further, for every $\ep>0$, as in the case of \eqref{sclt.proof-1} we have
\begin{equation}
\pr\bigg(\sup_{t\le T}|\D N^{n}_{t}|>\ep\bigg) = \pr\bigg(\max_{j\le n}|\D_{j}N^{n}|>\ep\bigg)
\le \sumj \pr\left(|\D_{j}N^{n}|>\ep\right) \lesssim \sumj \E\left(|\D_{j}N^{n}|^{4}\right) \to 0.
\nonumber
\end{equation}
We conclude from \cite[Theorem VI.3.26]{JacShi03} that $(N^{n})$ is $C$-tight.

Fix any $\{n'\}\subset\mbbn$. By \cite[Theorem VI.3.33]{JacShi03} the process $H^{n} := (M^{n},N^{n})$ is tight in $\mbbd([0,T];\mbbr)$. Hence, by Prokhorov's theorem we can pick a subsequence $\{n''\}\subset\{n'\}$ for which there exists a process $H=(M^{d},N)$ with $N$ being continuous, such that $H^{n''}\cil H$ along $\{n''\}$ in $\mbbd([0,T];\mbbr)$. By \eqref{loc.X.ineq} we have
\begin{equation}
\sup_{n}\E\bigg(\max_{j\le n}|\D_{j}N^{n}|\bigg)
\lesssim \sup_{n}\frac{1}{\sqrt{n}}\E\bigg( 1+\sup_{t\le T}|X_{t}|^{C}\bigg) < \infty,
\nonumber
\end{equation}
hence it follows from \cite[Corollary VI.6.30]{JacShi03} that the sequence $(H^{n''})$ is predictably uniformly tight.
In particular, $(H^{n''}, [H^{n''}]) \cil (H, [H])$ with the off-diagonal component of the limit quadratic-variation process being $0$ a.s.: $[M,N] =\la M^{c},N^{c}\ra + \sum_{s\le \cdot}(\D M_{s})(\D N_{s})=0$ a.s. identically (see \cite[Theorem I.4.52]{JacShi03}). Therefore, given any $\{n'\}\subset\mbbn$ we can find a further subsequence $\{n''\}\subset\{n'\}$ for which $[M^{n''},N^{n''}]\cil 0$. This concludes that
\begin{equation}
[M^{n},N^{n}]_{t}=\sum_{j=1}^{[t/h]}\frac{1}{\sqrt{n}}\pi_{j-1} \tilde{\eta}(\ep_{j}) \D_{j} M \cip 0
\label{sclt.proof-6}
\end{equation}
in $\mbbd([0,T];\mbbr)$.

Since $[M^{n},N^{n}] - \la M^{n},N^{n}\ra$ is a martingale, we may write
\begin{equation}
G^{n}_{t}:=[M^{n},N^{n}]_{t} - \la M^{n},N^{n}\ra_{t}=\int_{0}^{t}\int \chi^{n}(s,z)\tilde{\mu}(ds,dz)
\nonumber
\end{equation}
for some predictable process $\chi^{n}(s,z)$. Using the isometry property and the martingale property of the stochastic integral, we have $\E\{(G^{n}_{t})^{2}\}=\E\{\int_{0}^{t}\int \chi^{n}(s,z)^{2}ds\nu(dz)\}=\E\{\int_{0}^{t}\int \chi^{n}(s,z)^{2}\mu(ds,dz)\}
=\E\{\sum_{0<s\le t}(\D G^{n}_{s})^{2}\}=\E\{\sum_{0<s\le t}(\D M^{n}_{s}\D N^{n}_{s})^{2}\}$.
Since
\begin{equation}
(\D N^{n}_{s})^{2}\le\max_{j\le n}(\D_{j}N^{n})^{2}\lesssim \frac{1}{n}\bigg(1+\sup_{t\le T}|X_{t}|^{C}\bigg)
\nonumber
\end{equation}
and $\sum_{0<s\le T}(\D M^{n}_{s})^{2}$ is essentially bounded (for $M$ is bounded),
we obtain
\begin{equation}
\sup_{t\le T}\E\{(G^{n}_{t})^{2}\} \lesssim \frac{1}{n}\E\bigg(1+\sup_{t\le T}|X_{t}|^{C}\bigg) \to 0,
\nonumber
\end{equation}
which combined with \eqref{sclt.proof-6} yields \eqref{sclt.proof-4}: $\la M^{n},N^{n}\ra_{t} \cip 0$.
The proof of \eqref{amn.proof-2} is complete.

\medskip

\begin{rem}{\rm 
The setting \eqref{Ft_def} of the underlying filtration is not essential.
Even when the underlying probability space carries a Wiener process, we may still follow the martingale-representation argument as in \cite{TodTau12}.
\label{rem_sclt.proof}
}\qed\end{rem}

\subsubsection{Proof of \eqref{amn.proof-3}}\label{sec_lln.scle.proof}

The components of $\Gam_{n,T}$ consist of
\begin{align}
-\frac{1}{nh^{2(1-1/\beta)}}\p_{\al}^{2}\sqllf_{n}(\tz)&=\frac{1}{nh^{1-1/\beta}}\sumj
\frac{\p_{\al}^{2}a_{j-1}(\al_{0})}{c_{j-1}(\gam_{0})}g_{\beta}(\ep_{j})
-\frac{1}{n}\sumj
\frac{\{\p_{\al}a_{j-1}(\al_{0})\}^{\otimes 2}}{c_{j-1}^{2}(\gam_{0})}\p g_{\beta}(\ep_{j}),
\label{p.aa.H} \\
-\frac{1}{n}\p_{\gam}^{2}\sqllf_{n}(\tz)&=
-\frac{1}{n}\sumj \frac{\p_{\gam}^{2}c_{j-1}(\gam_{0})}{c_{j-1}(\gam_{0})}
\left\{1+\ep_{j}g_{\beta}(\ep_{j})\right\}
\label{p.gg.H}\\
&{}\qquad -\frac{1}{n}\sumj \frac{\{\p_{\gam}c_{j-1}(\gam_{0})\}^{\otimes 2}}{c_{j-1}^{2}(\gam_{0})}
\left\{1+2\ep_{j}g_{\beta}(\ep_{j})+\ep_{j}^{2}\p g_{\beta}(\ep_{j})\right\},
\nonumber \\
-\frac{1}{nh^{1-1/\beta}}\p_{\al}\p_{\gam}\sqllf_{n}(\tz) &= -\frac{1}{n}\sumj
\frac{\{\p_{\al}a_{j-1}(\al_{0})\}\otimes\{\p_{\gam}c_{j-1}(\gam_{0})\}}{c_{j-1}^{2}(\gam_{0})}
\left\{g_{\beta}(\ep_{j})+\ep_{j}\p g_{\beta}(\ep_{j})\right\}.
\nn
\end{align}
By \eqref{lem_aux.se1+1}, with Corollary \ref{prop_mainULLN.cor} when $\beta\in(1,2)$, the first term in the right-hand side of \eqref{p.aa.H} is $o_{p}(1)$. Since $-\int \p g_{\beta}(z)\phi_{\beta}(z)dz=\int g^{2}_{\beta}(z)\phi_{\beta}(z)dz=C_{\al}(\beta)$, by Proposition \ref{prop_mainULLN} we derive
\begin{align}
-\frac{1}{nh^{2(1-1/\beta)}}\p_{\al}^{2}\sqllf_{n}(\tz)
&= C_{\al}(\beta)\Sig_{T,\al}(\tz) + o_{p}(1).
\nonumber
\end{align}
By Proposition \ref{prop_mainULLN} and $\int k_{\beta}(z)\phi_{\beta}(z)dz=0$, the first term in the right-hand side of \eqref{p.gg.H} is $o_{p}(1)$. As for the second term, noting that the function $l_{\beta}(z):=1+2zg_{\beta}(z)+z^{2}\p g_{\beta}(z)$ satisfies $\int l_{\beta}(z)\phi_{\beta}(z)dz = -\int k_{\beta}^{2}(z)\phi_{\beta}(z)dz = -C_{\gam}(\beta)$, we obtain
\begin{align}
-\frac{1}{n}\p_{\gam}^{2}\sqllf_{n}(\tz) &= 
-\frac{1}{n}\sumj \frac{\{\p_{\gam}c_{j-1}(\gam_{0})\}^{\otimes 2}}{c_{j-1}^{2}(\gam_{0})}
\int l_{\beta}(z)\phi_{\beta}(z)dz + o_{p}(1) \nn\\
&= C_{\gam}(\beta)\Sig_{T,\gam}(\gam_{0}) + o_{p}(1).
\nonumber
\end{align}
Finally, since $\int\{ g_{\beta}(z) + z\p g_{\beta}(z)\}\phi_{\beta}(z)dz=0$, Proposition \ref{prop_mainULLN} concludes that
\begin{equation}
-\frac{1}{nh^{1-1/\beta}}\p_{\al}\p_{\gam}\sqllf_{n}(\tz) =o_{p}(1),
\nonumber
\end{equation}
completing the proof of \eqref{amn.proof-3}.

\subsection{Proof of Theorem \ref{sqmle.iv_bda_thm.cf}}

Under the condition $\int_{|z|>1}|z|^{q}\nu(dz)<\infty$ for every $q>0$, the moment estimates \eqref{loc.X.ineq} and \eqref{LusPag.ineq.result} are in force without truncating the support of $\nu$ (see Section \ref{sec_localization}).
Further, it follows from Lemmas \ref{key.lemma}(1) and \ref{key.lemma.cf} that we have both \eqref{llt.imp.1} and \eqref{llt.imp.2}.

\subsection{Proof of Corollary \ref{sqmle.iv_bda_thm.cor}}\label{sec_cor.proof}

The random mapping $\theta\mapsto (\Sig_{T,\al}(\theta), \Sig_{T,\gam}(\gam))$ is a.s. continuous, hence applying the uniform law of large numbers presented in Lemma \ref{lem.aux.LLN} we can deduce the convergences $\hat{\Sig}_{T,\al,n} \cip \Sig_{T,\al}(\tz)$ and $\hat{\Sig}_{T,\gam,n} \cip \Sig_{T,\gam}(\gam_{0})$. Then it is straightforward to derive \eqref{aqmle_amn_sn} from \eqref{amn.proof-5}, \eqref{amn.proof-DGjoint}, and \eqref{amn.proof-4}.

\subsection{Proof of Theorem \ref{sqmle_ergo_thm}}\label{sec_ergo.an.proof}

Most parts are essentially the same as in the proof of Theorem \ref{sqmle.iv_bda_thm.cf} (hence as in Theorem \ref{sqmle.iv_bda_thm}). We only sketch a brief outline.

The convergences \eqref{llt.imp.1} and \eqref{llt.imp.2} are valid under the present assumptions.
As in Theorem \ref{sqmle.iv_bda_thm.cf}, the localization introduced in Section \ref{sec_localization} is not necessary here, since, under the moment boundedness $\sup_{t}\E(|X_{t}|^{q})<\infty$ for any $q>0$ and the global Lipschitz property of $(a,c)$, we can deduce the large-time version of the latter inequality in \eqref{loc.X.ineq} by the standard argument:
for any $q\ge 2$ we have $\E(|X_{t+h}-X_{t}|^{q}|\mcf_{t}) \lesssim h (1+|X_{t}|^{C})$, hence in particular
\begin{equation}
\sup_{t\in\mbbrp}\E(|X_{t+h}-X_{t}|^{q}) \lesssim h \sup_{t\in\mbbrp}\E(1+|X_{s}|^{C}) \lesssim h.
\nn
\end{equation}
Obviously, \eqref{LusPag.ineq.result} remains the same and Lemmas \ref{lem_aux.se1} and \ref{lem_aux.se_r1} stay valid as well.

\medskip

As for the uniform low of large numbers under $T_{n}\to\infty$, we have the following ergodic counterpart to Lemma \ref{lem.aux.LLN}:

\begin{lem}
For any measurable function $f:\mbbr\times\overline{\Theta}\to\mbbr$ such that
\begin{equation}
\sup_{\theta}\left\{ |f(x,\theta)| + |\p_{x}f(x,\theta)| \right\} \lesssim 1+|x|^{C},
\nonumber
\end{equation}
we have
\begin{equation}
\sup_{\theta}\bigg|\frac{1}{n}\sum_{j=1}^{n} f_{j-1}(\theta) - \int f(x,\theta)\pi_{0}(dx)\bigg|\cip 0.
\nonumber
\end{equation}
\label{lem.aux.LLN_ergo}
\end{lem}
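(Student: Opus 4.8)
The plan is to decompose, using $T_{n}=nh$ (so that $[T_{n}/h]=n$ and no boundary remainder occurs, unlike in Lemma \ref{lem.aux.LLN}),
\begin{align*}
\frac{1}{n}\sumj f_{j-1}(\theta) - \int f(x,\theta)\pi_{0}(dx)
&= \frac{1}{T_{n}}\sumj\int_{j}\big(f_{j-1}(\theta)-f(X_{s},\theta)\big)\,ds \\
&\quad {}+ \bigg(\frac{1}{T_{n}}\int_{0}^{T_{n}}f(X_{s},\theta)\,ds - \int f(x,\theta)\pi_{0}(dx)\bigg),
\end{align*}
and to show that the supremum over $\theta\in\overline{\Theta}$ of each of the two terms on the right tends to $0$ in probability. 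Note first that $\int|x|^{q}\pi_{0}(dx)<\infty$ for every $q>0$ by Fatou's lemma and $\sup_{t}\E(|X_{t}|^{q})<\infty$, so that $\theta\mapsto\int f(x,\theta)\pi_{0}(dx)$ is well-defined (and, by dominated convergence, continuous whenever $f$ is continuous in $\theta$).

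The first (discretization) term is handled verbatim as in the proof of Lemma \ref{lem.aux.LLN}: by the mean value theorem in the spatial variable together with the polynomial bound on $\p_{x}f$,
\begin{equation*}
\sup_{\theta}\bigg|\frac{1}{T_{n}}\sumj\int_{j}\big(f_{j-1}(\theta)-f(X_{s},\theta)\big)\,ds\bigg|
\lesssim \frac{1}{T_{n}}\sumj\int_{j}\big(1+|X_{t_{j-1}}|+|X_{s}|\big)^{C}|X_{s}-X_{t_{j-1}}|\,ds,
\end{equation*}
and by Cauchy-Schwarz together with the two large-time moment estimates recorded in the proof of Theorem \ref{sqmle_ergo_thm} (namely $\sup_{t}\E(|X_{t}|^{q})<\infty$ for all $q>0$ and $\E(|X_{t+h}-X_{t}|^{q}\,|\,\mcf_{t})\lesssim h(1+|X_{t}|^{C})$), the expectation of the right-hand side is $\lesssim\frac{1}{T_{n}}\sumj\int_{j}h^{1/2}\,ds = h^{1/2}\to 0$. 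This bound is uniform in $\theta$, so the first term is $o_{p}(1)$ uniformly.

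For the second (ergodic) term, convergence in probability for \emph{fixed} $\theta$ is precisely Assumption \ref{A_ergo}(1) applied to the at-most-polynomially-growing function $x\mapsto f(x,\theta)$. To upgrade this to uniform convergence over the compact set $\overline{\Theta}$, I would prove stochastic equicontinuity of $\theta\mapsto\frac{1}{T_{n}}\int_{0}^{T_{n}}f(X_{s},\theta)\,ds$. Writing $\omega(x,\delta):=\sup_{|\theta-\theta'|\le\delta}|f(x,\theta)-f(x,\theta')|$, one has $\sup_{|\theta-\theta'|\le\delta}|\frac{1}{T_{n}}\int_{0}^{T_{n}}(f(X_{s},\theta)-f(X_{s},\theta'))\,ds| \le \frac{1}{T_{n}}\int_{0}^{T_{n}}\omega(X_{s},\delta)\,ds$, whose expectation is $\le\sup_{s}\E\{\omega(X_{s},\delta)\}$. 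Splitting according to $\{|X_{s}|\le M\}$ and its complement: the far part is dominated uniformly in $s$ and $\delta$ by the uniform integrability of $\{(1+|X_{s}|^{C})\}_{s\ge 0}$ (a consequence of $\sup_{s}\E(|X_{s}|^{2C})<\infty$), while on $\{|X_{s}|\le M\}$ one uses that $f$ is jointly continuous (which follows from its continuity in $\theta$ together with the Lipschitz-in-$x$ estimate above), hence uniformly continuous on the compact set $[-M,M]\times\overline{\Theta}$, so that $\sup_{|x|\le M}\omega(x,\delta)\to 0$ as $\delta\downarrow 0$. Combining the pointwise convergence at the centres of a finite $\delta$-net of $\overline{\Theta}$ with this equicontinuity and letting $\delta\downarrow 0$ then yields the uniform convergence.

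The only genuinely non-routine ingredient is the uniformity in $\theta$ of the ergodic term; the discretization term reproduces the argument of Lemma \ref{lem.aux.LLN}, and the pointwise ergodic limit is assumed. Moreover, in every application of this lemma within the proof of Theorem \ref{sqmle_ergo_thm} the relevant $f$ is in fact $\mcc^{1}$ in $\theta$ with $\sup_{\theta}|\p_{\theta}f(x,\theta)|\lesssim 1+|x|^{C}$, in which case $\omega(x,\delta)\le\delta(1+|x|^{C})$ and the equicontinuity step reduces to the one-line bound $\E\{\frac{1}{T_{n}}\int_{0}^{T_{n}}\omega(X_{s},\delta)\,ds\}\lesssim\delta$.
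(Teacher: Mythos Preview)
Your argument is correct, but the paper takes a somewhat different and more compressed route. Rather than passing through the continuous-time average and handling the discretization and ergodic pieces separately, the paper works directly with the discrete-time difference $\D_{n}^{f}(\theta):=n^{-1}\sumj f_{j-1}(\theta)-\int f(x,\theta)\pi_{0}(dx)$, notes that $\D_{n}^{f}(\theta)\cip 0$ for each fixed $\theta$ (the discretization step you spell out being tacitly folded into this), and then upgrades to uniform convergence by showing tightness of $\{\D_{n}^{f}(\cdot)\}_{n}$ in $\mcc(\overline{\Theta})$ via the single bound $\sup_{\theta}|\p_{\theta}\D_{n}^{f}(\theta)|\lesssim n^{-1}\sumj(1+|X_{t_{j-1}}|^{C})$. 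Thus the paper's uniformity device is the $\theta$-derivative rather than your modulus-of-continuity/finite-net equicontinuity argument. Both approaches implicitly require $\theta$-regularity of $f$ beyond what the lemma literally states (the paper needs $\p_{\theta}f$ with polynomial growth, you need joint continuity); you are right to observe that in every application within the proof of Theorem \ref{sqmle_ergo_thm} the relevant $f$ is in fact $\mcc^{1}$ in $\theta$ with the required growth bound, so either route is available. Your version has the advantage of making explicit the role of the hypothesis on $\p_{x}f$ (which the paper uses only implicitly in claiming pointwise convergence of the discrete sums from \eqref{ergo_thm}), while the paper's is shorter once one is willing to assume the $\theta$-derivative bound.
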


\begin{proof}
Write $\D_{n}^{f}(\theta)=n^{-1}\sum_{j=1}^{n} f_{j-1}(\theta) - \int f(x,\theta)\pi_{0}(dx)$. By \eqref{ergo_thm} we have $\D_{n}^{f}(\theta)\cip 0$ for each $\theta$, hence it suffices to show the tightness of $\{\sup_{\theta}|\p_{\theta}\D_{n}^{f}(\theta)|\}_{n}$ in $\mbbr$, which implies the tightness of $\{\D_{n}^{f}(\cdot)\}_{n}$ in $\mcc(\overline{\Theta})$. But this is obvious since
\begin{equation}
\sup_{\theta}|\p_{\theta}\D_{n}^{f}(\theta)|\lesssim \frac{1}{n}\sumj \sup_{\theta}|f_{j-1}(\theta)-\pi_{0}(f(\cdot,\theta))| \lesssim \frac{1}{n}\sumj (1+|X_{t_{j-1}}|^{C}).
\nonumber
\end{equation}
\end{proof}

\medskip

Having Lemma \ref{lem.aux.LLN_ergo} in hand, we can follow the contents of Sections \ref{sec_proof.consistency}, \ref{sec_proof.amn}, and \ref{sec_lln.scle.proof}. The proof of the central limit theorem is much easier than the mixed normal case, for we now have no need for looking at the step processes introduced in Section \ref{sec_score.scle.proof} and also for taking care of the asymptotic orthogonality condition \eqref{sclt.proof-4}. By means of the classical central limit theorem for martingale difference arrays \cite{Dvo77}, it suffices to show, with the same notation as in \eqref{6.4.2_pi.def} and \eqref{6.4.2_eta.def},
\begin{align}
& \sum_{j=1}^{n}\E^{j-1}\bigg(\bigg|\frac{1}{\sqrt{n}}\pi_{j-1}\eta(\ep_{j})\bigg|^{4}\bigg) \cip 0,
\nn
\\
& \frac{1}{n}\sum_{j=1}^{n}\pi_{j-1} \cdot \E^{j-1}\Bigl\{ \Bigl(\eta(\ep_{j}) - \E^{j-1}\{\eta(\ep_{j})\}\Bigr)^{\otimes 2}
\Bigr\} \pi_{j-1} \cip \diag\big(V_{\al}(\theta_{0};\beta),V_{\gam}(\theta_{0};\beta)\big),
\nn
\\
& \frac{1}{\sqrt{n}}\sum_{j=1}^{n}\pi_{j-1}\E^{j-1}\{\eta(\ep_{j})\} \cip 0,
\nn
\end{align}
all of which can be deduced from the same arguments as in Section \ref{sec_score.scle.proof}.

\bigskip

\noindent
{\bf Acknowledgement.}
The author is grateful to Professor Jean Jacod and to the two anonymous reviewers for their helpful comments, most of which have led to substantial improvements.
This work was partly supported by JSPS KAKENHI Grant Number JP26400204 and JP17K05367, and also JST CREST Grant Number JPMJCR14D7, Japan.

\bigskip


\def\cprime{$'$} \def\polhk#1{\setbox0=\hbox{#1}{\ooalign{\hidewidth
  \lower1.5ex\hbox{`}\hidewidth\crcr\unhbox0}}} \def\cprime{$'$}
  \def\cprime{$'$}

\end{document}